\newcommand{\ud}[0]{\,\mathrm{d}}
\newcommand{\abs}[1]{|#1|}
\newcommand{\Norm}[2]{\|#1\|_{#2}}
\renewcommand{\Re}[0]{\operatorname{Re}}
\renewcommand{\Im}[0]{\operatorname{Im}}
\newcommand{\R}{\mathbb{R}}
\newcommand{\C}{\mathbb{C}}
\newcommand{\N}{\mathbb{N}}
\newcommand{\T}{\mathbb{T}}
\newcommand{\Exp}[0]{\mathbb{E}}
\newcommand{\eps}[0]{\varepsilon}
\swapnumbers \numberwithin{equation}{section}
\theoremstyle{plain}
\newtheorem{theorem}[equation]{Theorem}
\newtheorem{corollary}[equation]{Corollary}
\newtheorem{lemma}[equation]{Lemma}
\theoremstyle{definition}
\newtheorem{definition}[equation]{Definition}
\theoremstyle{remark}
\newtheorem{remark}[equation]{Remark}
 \def\@textbottom{\vskip \z@ \@plus 1pt}
 \let\@texttop\relax
\def\namedlabel#1#2{\begingroup
   \def\@currentlabel{#2}%
   \label{#1}\endgroup
}
\begin{document}

\title[Bounded Holomorphic semigroups]{Quantitative estimates for bounded holomorphic semigroups}

\author[T. \ Hyt\"onen and S. Lappas]{Tuomas Hyt\"onen and Stefanos Lappas}

\address{T.H. \& S.L.: Department of Mathematics and Statistics, P.O.Box~68 (Pietari Kalmin katu~5), FI-00014 University of Helsinki, Finland}
\email{tuomas.hytonen@helsinki.fi, vlappas@hotmail.com}

\address{T.H.: Department of Mathematics and Systems Analysis, Aalto University, P.O. Box 11100, FI-00076 Aalto, Finland}
\email{tuomas.p.hytonen@aalto.fi}

\address{S.L.: Department of Mathematical Analysis, Faculty of Mathematics and Physics, Charles University, Sokolovsk\'a 83, 186 75 Praha 8, Czech Republic}
\email{stefanos.lappas@matfyz.cuni.cz}


\keywords{Bounded holomorphic semigroups, Symmetric diffusion semigroups, Uniform convexity, Martingale cotype, Littlewood--Paley--Stein theory.}
\subjclass[2020]{47D03 (Primary); 42B25, 46B20}



\maketitle


\begin{abstract}
In this paper we revisit the theory of one-parameter semigroups of linear operators on Banach spaces in order to prove quantitative bounds for bounded holomorphic semigroups. Subsequently, relying on these bounds we obtain new quantitative versions of two recent results of Xu related to the vector-valued Littlewood--Paley--Stein theory for symmetric diffusion semigroups.
\end{abstract}

\section{Introduction}

The theory of Banach valued martingales involving the notions of martingale type and cotype was introduced and studied in depth by Pisier (we refer to \cite{Pisier, Pisier:1982, Pisier:1986, Pisier:2016} and Section \ref{section:4} for more information). His renorming theorem states that these are geometric properties of the underlying Banach space, characterized by the existence of an equivalent norm in the space which is uniformly convex of power type $q$ (for the precise statement of this result see Theorem \ref{thm:Pisier} in Section \ref{section:4}).

On the other hand, first Stein in \cite[Chapter IV]{S} proved the following result which 
extends the classical inequality on the Littlewood--Paley $g$-function: for every $1<p<\infty$

\begin{equation}\label{eq:LSP}
  \bigg\|\bigg(\int_0^\infty\bigg|t\frac{\partial}{\partial t}T_t f\bigg|^2\frac{dt}{t}\bigg)^{1/2}\bigg\|_{L_p(\Omega)}\approx\|f\|_{L_p(\Omega)},\quad\forall f\in L_p(\Omega),
\end{equation}
where by $\{T_t\}_{t>0}$ we denote a symmetric diffusion semigroup (see Definition \ref{def:sds} in Section \ref{section:4}) and the equivalence constants depend only on $p$.

The aforementioned theory about martingale inequalities is closely related to the vector-valued Littlewood--Paley--Stein theory which was developed in \cite{MTX, Xu:98, X}. In this direction, a Littlewood--Paley theory was first developed in \cite{Xu:98} for functions with values in uniformly convex Banach spaces. In particular, Xu obtained in \cite{Xu:98} the one-sided vector-valued extension of \eqref{eq:LSP} for the classical Poisson semigroup on the torus $\T$.

Mart\'{i}nez--Torrea--Xu \cite{MTX} characterized, in the vector-valued setting, the validity of the following one-sided inequality concerning the generalized Little\-wood--Paley $g$-function associated with a subordinated Poisson symmetric diffusion semigroup $\{P_t\}_{t>0}$ by the martingale type and cotype properties of the underlying Banach space: for given Banach space $X$ and $1<q<\infty$, $X$ is of martingale cotype $q$ iff for every $1<p<\infty$
\begin{equation}\label{eq:LSP2}
  \bigg\|\bigg(\int_0^\infty\bigg\|t\frac{\partial}{\partial t}P_t f\bigg\|_X^q\frac{dt}{t}\bigg)^{1/q}\bigg\|_{L_p(\Omega)}\lesssim\|f\|_{L_p(\Omega; X)},\quad\forall f\in L_p(\Omega; X).
\end{equation}
The converse inequality of \eqref{eq:LSP2} was also treated in \cite{MTX} iff $X$ is of martingale type $q$. Here the subordinated Poisson semigroup $\{P_t\}_{t>0}$ is defined by
\begin{equation*}
  P_t f=\frac{1}{\sqrt{\pi}}\int_0^\infty\frac{e^{-s}}{\sqrt{s}}T_{\frac{t^2}{4s}}fds,
\end{equation*}
where $\{T_t\}_{t>0}$ is a symmetric diffusion semigroup. This $\{P_t\}_{t>0}$ is again a symmetric diffusion semigroup. Recall that if $A$ denotes the negative infinitesimal generator of $\{T_t\}_{t>0}$, then $P_t=e^{-\sqrt{A}t}$.

The question whether \eqref{eq:LSP2} is also true for a general (not necessarily subordinated) diffusion semigroup $\{T_t\}_{t>0}$ in place of $\{P_t\}_{t>0}$, or even just, in the special case of the heat semigroup on $\R^n$, was left open in \cite{MTX} (see \cite[Problem 2 on p. 447]{MTX}).

This question concerning the heat semigroup on $\R^n$ was answered in the affirmative by Naor and one of us \cite{HN}, yielding good dependence of various bounds on the target Banach space $X$, but also with dependence on the dimension $n$ of the domain $\R^n$. This was further elaborated by Xu \cite{X}, who extended the previous result to general diffusion semigroups $\{T_t\}_{t>0}$ and replaced a concrete bound for the derivative of the heat kernel used in \cite{HN} by an abstract analyticity result from \cite{Pisier:1982}. While giving a form of the Littlewood--Paley--Stein inequality

\begin{equation}\label{LP}
  \bigg(\int_0^\infty\bigg\|t\frac{\partial}{\partial t} e^{t\Delta}f\bigg\|_{L^q(\R^n;X)}^q\frac{dt}{t}\bigg)^{1/q}\lesssim\|f\|_{L^q(\R^n;X)},
\end{equation}
entirely free of the domain's dimension $n$, this abstract argument relies on implicit unquantified bounds, and obscures the dependence of the bound on the target Banach space $X$, which was completely explicit in \cite{HN}.

The main result of this work is Theorem \ref{heat} in Section \ref{section:5} which provides a quantitative version of the main result of \cite[Theorem 2 (i)]{X}, including explicit dependence on the target Banach space $X$ and quantitative dimension free version of \eqref{LP} not only for the heat semigroup on $\R^n$ but also for the general diffusion semigroup $\{T_t\}_{t>0}$ (see also \cite{Xu:2021, Xu:22} for some related work).

The paper is organized as follows: in Sections \ref{section:2}, \ref{section:3} and \ref{section:4} we present the tools for proving our main result in Section \ref{section:5}. To be more precise, we provide quantitative versions of several classical results in the theory of bounded holomorphic semigroups, in particular their characterization due to Kato \cite{K}. Having these at our disposal, in Section \ref{section:4} we quantify the dependence of the analyticity bound of \cite[Remark 1.8(b)]{Pisier:1982} (see also Corollary \ref{coro:main 1}) for extensions of diffusion semigroups $\{T_t\}_{t>0}$ to uniformly convex spaces. As discussed above, this has direct implications to bounds in Littlewood--Paley--Stein inequalities \eqref{LP} by \cite{HN, X}. In addition, we provide an explicit dependence of $\sup_{t>0}\|t\partial T_{t}\|$ on the martingale cotype constant $\mathfrak{m}_{q,X}$ (see Corollary \ref{coro:main 2}), which was the case for the related estimates in \cite{HN}. Section \ref{section:5} is devoted to the proof of Theorem \ref{heat}. Finally, in Section \ref{section:6} using Corollary \ref{coro:main 2} we obtain a quantitative version of one more result of Xu \cite[Theorem 1.4]{Xu:2021} which we compare  with Theorem \ref{heat}.

\subsection*{Notation}
Throughout the paper, given $a,b\in(0,\infty)$, the notation $a\lesssim b$ means that $a\le cb$ for some universal constant $c\in(0,\infty)$. The notation $a\approx b$ stands for $(a\lesssim b)\wedge(b\lesssim a)$. If we need to allow for dependence on parameters, we indicate this by subscripts. Moreover, given a Banach space $X$, we denote by $L_p(\Omega;X)$ the usual $L_p$ space of strongly measurable functions from $\Omega$ to $X$ and we will use the abbreviation $\partial=\partial/\partial t$.

\section{Preliminaries}\label{section:2}

We begin by recalling several basic definitions and results from the semigroup theory:

\begin{definition}[Strongly Continuous Semigroup]\label{Def.1} 
Let $X$ be a Banach space. A family $(T(t))_{t\ge 0}$ of bounded linear operators on $X$ is called a {\em strongly continuous (one-parameter semigroup) (or $C_0$-semigroup)} if
\begin{enumerate}[(i)]
  \item $T(0)=I$,
  \item $T(t+s)=T(t)T(s)\quad t,s\ge 0$,
  \item\namedlabel{(iii)}{(iii)}  $\lim_{t\downarrow 0}T(t)x=x\quad\forall x\in X$.
\end{enumerate}
\end{definition}

\begin{remark}
A consequence of Definition \ref{Def.1} and the uniform boundedness principle (see also \cite[Proposition 5.5]{EN}) is the following exponential boundedness of a strongly continuous semigroup $(T(t))$: there exist constants $M\ge 1$ and $\omega\in\R$ such that
\begin{equation*}
  \|T(t)\|\leq M e^{\omega t}
\end{equation*}
for all $t\ge 0$.
\end{remark}

\begin{definition}[Infinitesimal Generator]
Let $(T(t))$ be a $C_0$-semigroup. The {\em infinitesimal generator} of $(T(t))$ is the linear operator defined by
\begin{equation*}
\begin{split}
  D(A)&:=\bigg\{x\in X:\lim_{h\downarrow 0}\frac{T(h)x-x}{h}\;\text{exists}\bigg\},  \\ Ax&:=\lim_{h\downarrow 0}\frac{T(h)x-x}{h}.
\end{split}
\end{equation*}
\end{definition}

\begin{definition}
Let $A:D(A)\subset X\rightarrow X$ be a a closed, linear operator on some Banach space $X$.
We call
\begin{equation*}
  \rho(A):=\{\lambda\in\C:\lambda-A:D(A)\rightarrow X\;\;\text{is bijective}\}
\end{equation*}
the {\em resolvent set} and its complement $\sigma(A):=\C\setminus\rho(A)$ the {\em spectrum} of $A$. For $\lambda\in\rho(A)$, the inverse
\begin{equation*}
  R(\lambda,A):=(\lambda-A)^{-1}
\end{equation*}
is, by the closed graph theorem, a bounded operator on $X$ and will be called the {\em resolvent} of $A$ at the point $\lambda$.
\end{definition}

\begin{theorem}[\cite{EN}, Theorem II.3.8 or \cite{P}, Theorem I.5.3 and Remark I.5.4](Hille--Yosida Generation Theorem)\label{thm:Hille-Yosida}
Let $A$ be a linear operator on $X$ and let $M\ge 1, \omega\in\R$. The following conditions are equivalent:
\begin{enumerate}[(a)]
  \item $A$ generates a $C_0$-semigroup satisfying
  \begin{equation*}
    \|T(t)\|\leq M e^{\omega t}\qquad\text{for}\;t\ge 0.    
  \end{equation*}
  \item $A$ is closed and densely defined, the resolvent set $\rho(A)$ contains the half-plane $\{\lambda\in\C:\Re\lambda>\omega\}$, and
  \begin{equation*}
    \|R(\lambda, A)^n\|\leq\frac{M}{(\Re\lambda-\omega)^n}\qquad\text{for all}\;n\in\N.
  \end{equation*}
\end{enumerate}
\end{theorem}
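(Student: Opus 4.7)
The plan is to prove the two implications separately, using Laplace transforms in one direction and Yosida approximations in the other.

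For (a) $\Rightarrow$ (b), I would first establish closedness and density of $D(A)$ from the standard identity $\lim_{h\downarrow 0} h^{-1}\int_0^h T(s)x\,ds = x$ combined with the commutation of $A$ and $\int_0^h T(s)\,ds$. The resolvent is then constructed as the Laplace transform
\[
  R(\lambda)x := \int_0^\infty e^{-\lambda t} T(t)x\,dt, \quad \Re\lambda > \omega,
\]
which converges absolutely by the exponential bound on $\|T(t)\|$. Using the semigroup law $T(h)T(t) = T(t+h)$, one verifies $(\lambda - A)R(\lambda) = I$ on $X$ and $R(\lambda)(\lambda - A) = I$ on $D(A)$, so $R(\lambda) = R(\lambda,A)$. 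Differentiating $n-1$ times under the integral in $\lambda$ yields
\[
  R(\lambda, A)^n x = \frac{1}{(n-1)!} \int_0^\infty t^{n-1} e^{-\lambda t} T(t)x\,dt,
\]
and bounding the right-hand side using $\|T(t)\|\leq Me^{\omega t}$ together with $\int_0^\infty t^{n-1} e^{-(\Re\lambda-\omega)t}\,dt = (n-1)!/(\Re\lambda-\omega)^n$ gives the desired power estimate.

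For (b) $\Rightarrow$ (a), I would use the Yosida approximation. For $\lambda > \omega$, set $A_\lambda := \lambda A R(\lambda, A) = \lambda^2 R(\lambda, A) - \lambda I$, which is a bounded operator on all of $X$. One first proves $\lambda R(\lambda, A)x \to x$ as $\lambda\to\infty$ for $x\in X$ (starting with $x\in D(A)$ via the identity $\lambda R(\lambda,A)x - x = R(\lambda,A)Ax$ and the resolvent estimate, then extending by density), which then implies $A_\lambda x \to Ax$ for $x\in D(A)$. Each $A_\lambda$ generates a uniformly continuous semigroup $T_\lambda(t) := e^{tA_\lambda}$, and expanding
\[
  T_\lambda(t) = e^{-\lambda t}\sum_{n=0}^\infty \frac{(t\lambda^2)^n}{n!} R(\lambda, A)^n,
\]
the power estimate produces the uniform bound $\|T_\lambda(t)\| \leq M e^{\omega\lambda t/(\lambda - \omega)}$, whose exponent tends to $\omega t$ as $\lambda\to\infty$. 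Convergence of the Yosida family is then obtained from the Duhamel-type identity
\[
  T_\lambda(t)x - T_\mu(t)x = \int_0^t T_\mu(t-s) T_\lambda(s) (A_\lambda - A_\mu)x\,ds, \qquad x\in D(A),
\]
which uses the commutation of the $T_\lambda$ with $A_\mu$, the uniform bound above, and the pointwise convergence $A_\lambda x\to Ax$. This yields local uniform Cauchyness in $t$, so $T(t)x := \lim_{\lambda\to\infty} T_\lambda(t)x$ exists first on $D(A)$ and extends by density to all of $X$. Passing to the limit in the semigroup identities, strong continuity, and the differential equation $dT_\lambda(t)x/dt = A_\lambda T_\lambda(t)x$ then gives the required properties and identifies $A$ as the generator.

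The main obstacle is the converse direction, and in particular the necessity of exploiting the full power estimate on $R(\lambda, A)^n$: the naive bound $\|R(\lambda,A)\|^n \leq M^n/(\Re\lambda-\omega)^n$ would contribute a factor $M^n$ in the series for $e^{tA_\lambda}$, which would blow up and prevent any uniform-in-$\lambda$ bound. The formulation of (b) with all powers $n$ is precisely calibrated to yield the exponent $\omega\lambda t/(\lambda - \omega)$ with the constant $M$ out front, so that the limit $Me^{\omega t}$ matches (a) on the nose.
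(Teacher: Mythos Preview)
The paper does not prove this theorem at all; it is stated as a classical result with references to Engel--Nagel and Pazy, and is used as a tool in the subsequent sections. Your proposal is a correct outline of the standard textbook proof (essentially the one in Pazy), so there is nothing to compare.
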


\begin{definition}[Open/Closed Sector]
Let $\alpha\in[0,\pi)$. We call
\begin{equation*}
\begin{split}
  &\Sigma_{\alpha}:=\{z\in\C\setminus\{0\}:|\arg(z)|<\alpha\},  \\
  &\overline{\Sigma_{\alpha}}:=\{z\in\C:|\arg(z)|\le\alpha\} 
\end{split}
\end{equation*}
{\em the open/closed sector of angle $\alpha$ in the complex plane}; the argument is taken in $(-\pi,\pi)$. 
\end{definition}

\begin{definition}[Holomorphic Semigroup]\label{def:hol.semi.}
A family of bounded linear operators $(T(z))_{z\in\Sigma_{\delta}\cup\{0\}}$ is called a {\em holomorphic semigroup (of angle $\delta$)} if
\begin{enumerate}[(i)]
  \item\namedlabel{prop (i)}{(i)} $T(z_1+z_2)=T(z_1)T(z_2)$\;\text{for all}\;$z_1,z_2\in \Sigma_{\delta}$.
  \item\namedlabel{prop (ii)}{(ii)} The map $z\mapsto T(z)$ is holomorphic in $\Sigma_{\delta}$.
  \item\namedlabel{prop (iii)}{(iii)} $\displaystyle{\lim_{\substack{z\to 0\\ z\in \overline{\Sigma}_{\delta'}}}T(z)x=x}$ for all $x\in X$ and $0<\delta'<\delta$. 
\end{enumerate}  
If, in addition,  
\begin{enumerate}[(iv)]  
  \item $\|T(z)\|$ is bounded in $\overline{\Sigma}_{\delta'}$ for every $0<\delta'<\delta$, 
\end{enumerate}
we call $(T(z))_{z\in\Sigma_{\delta}\cup\{0\}}$ a {\em bounded holomorphic semigroup}.
\end{definition}

\begin{theorem}[\cite{EN}, IV.1.2](Resolvent equation)\label{thm:resol.eq.} 
Let $A$ be a closed linear operator. For $\lambda,\mu\in \rho(A)$ one has
\begin{equation*}
  R(\lambda,A)-R(\mu,A)=(\mu-\lambda)R(\lambda,A)R(\mu,A).
\end{equation*}
\end{theorem}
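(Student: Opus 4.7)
The plan is to derive the identity by a direct algebraic manipulation of the defining relations of the resolvents, namely $(\lambda - A)R(\lambda,A) = I$ on $X$ and $R(\lambda,A)(\lambda - A) = I$ on $D(A)$, and analogously for $\mu$. The key trivial observation is that on $D(A)$ one has
$$
(\mu - A) - (\lambda - A) = (\mu - \lambda) I,
$$
so the whole identity should fall out by ``inserting'' this decomposition between the two resolvents.

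Concretely, I would start from $R(\lambda,A) = R(\lambda,A)\cdot I = R(\lambda,A)(\mu - A)R(\mu,A)$, which is valid because $(\mu - A)R(\mu,A) = I$ on all of $X$. Rewriting $\mu - A = (\mu - \lambda) I + (\lambda - A)$ inside this expression and using that $R(\mu,A)$ sends $X$ into $D(A)$, so that $R(\lambda,A)(\lambda - A)$ acts on $R(\mu,A)x$ simply as the identity, yields
$$
R(\lambda,A) = (\mu - \lambda) R(\lambda,A) R(\mu,A) + R(\mu,A),
$$
which rearranges to the claimed formula.

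The main (very mild) point to check is that the compositions are well-defined and that the unbounded operator $\lambda - A$ may legitimately be applied to $R(\mu,A)x$; this is immediate from $\range(R(\mu,A)) = D(A)$ and the fact that $R(\lambda,A)$ and $R(\mu,A)$ are bounded on $X$. There is no real obstacle here: the resolvent equation is a one-line consequence of the algebraic identities defining the resolvent, and as a side benefit one obtains commutativity $R(\lambda,A)R(\mu,A) = R(\mu,A)R(\lambda,A)$ by swapping the roles of $\lambda$ and $\mu$ in the computation.
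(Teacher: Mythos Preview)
Your argument is correct and is exactly the standard one-line derivation of the resolvent equation. Note, however, that the paper does not give its own proof of this statement: it is recorded in the preliminaries as a cited result from \cite[IV.1.2]{EN}, so there is nothing to compare against beyond observing that your proof matches the textbook argument.
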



\section{Classical semigroup theory made quantitative}\label{section:3}

In this section we collect the additional results from semigroup theory that we will use in Section \ref{section:4}. Most of these results are in principle known, but difficult to find in the precise quantitative form that we need. For this reason, we also revisit most of the proofs in order to track the dependence on the various constants. Besides our application in Section \ref{section:4}, we hope that recording these quantitative formulations might have an independent interest and possible other applications elsewhere. In addition to the standard references \cite{EN, P}, we have benefited from the nice presentation of the classical theory in \cite{F}.

\begin{lemma}[\cite{EN}, Theorem 4.6; \cite{P}, Chapter 2, Theorem 5.2; \cite{F}, Theorem 1.1.23]\label{lem:main1}
Let $(A,D(A))$ be a linear operator on $X$. Suppose that $A$ generates a bounded strongly continuous semigroup $(T(t))$ with $\|T(t)\|\le M$ on $X$, where $M\ge1$, and there exists a constant $C>0$ such that
\begin{equation}\label{eqqq1}
  \|R(r+is,A)\|\le\frac{C}{|s|}
\end{equation}
for all $r>0$ and $0\ne s\in\R$. Then $\Sigma_{\frac{\pi}{2}+\delta}\subset\rho(A)$ for $\delta=\arctan(\frac{1}{C})$ and the resolvent of $A$ satisfies 
\begin{equation}\label{eqqq2}
  \|\lambda R(\lambda,A)\|\leq\frac{\sqrt{C^2+M^2}}{1-q}
\end{equation}
for all $q\in(0,1)$ and all $\lambda\in\C$ such that $|\arg(\lambda)|\le\frac{\pi}{2}+\arctan(\frac{q}{C})$.
\end{lemma}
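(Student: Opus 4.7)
The plan is to combine two resolvent estimates on the right half-plane $\{\Re\lambda>0\}$ --- the one coming from Hille--Yosida (Theorem \ref{thm:Hille-Yosida}) and the one given in \eqref{eqqq1} --- and then to extend the resolvent across the imaginary axis into a larger sector by a Taylor series argument based on Proposition \ref{prop:main1}.

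Since $\|T(t)\|\le M$ for all $t\ge 0$, the Hille--Yosida theorem (applied with $\omega=0$) yields $\|R(r+is,A)\|\le M/r$ for every $r>0$. Coupled with \eqref{eqqq1} this gives the two-sided bound $\|R(r+is,A)\|\le\min(M/r,C/|s|)$ whenever $r>0$ and $s\ne 0$. A short case analysis depending on which of $M/r$ or $C/|s|$ realises the minimum --- the worst case occurring when $|s|/r=C/M$ --- shows that $|\lambda|\,\|R(\lambda,A)\|\le\sqrt{M^2+C^2}$ throughout the open right half-plane, which already proves \eqref{eqqq2} there.

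To go beyond the imaginary axis, I fix $\lambda=-t+iy$ with $t\ge 0$ and, say, $y>0$, satisfying $t/y<q/C$, and centre the Taylor expansion from Proposition \ref{prop:main1} at $\mu_\eps=\eps+iy$ for small $\eps>0$. Since $\mu_\eps\in\rho(A)$ with $\|R(\mu_\eps,A)\|\le C/y$, the quantity $|\mu_\eps-\lambda|\,\|R(\mu_\eps,A)\|=(t+\eps)C/y$ is strictly less than $1$ for $\eps$ sufficiently small, so $\lambda\in\rho(A)$ and
\begin{equation*}
  \|R(\lambda,A)\|\le\frac{\|R(\mu_\eps,A)\|}{1-|\mu_\eps-\lambda|\,\|R(\mu_\eps,A)\|}\le\frac{C/y}{1-(t+\eps)C/y}.
\end{equation*}
Letting $\eps\downarrow 0$ and using $tC/y\le q$ gives $\|R(\lambda,A)\|\le C/(y(1-q))$. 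Multiplying by $|\lambda|=y\sqrt{1+(t/y)^2}\le y\sqrt{1+q^2/C^2}$ and noting that $q<1\le M$ yields $|\lambda|\,\|R(\lambda,A)\|\le\sqrt{C^2+q^2}/(1-q)\le\sqrt{C^2+M^2}/(1-q)$. Since $\arg\lambda=\pi/2+\arctan(t/y)$, the hypothesis $t/y<q/C$ is equivalent to $|\arg\lambda|<\pi/2+\arctan(q/C)$, as required, and letting $q\uparrow 1$ recovers the sector inclusion $\Sigma_{\pi/2+\delta}\subset\rho(A)$ with $\delta=\arctan(1/C)$.

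The main technical point is the geometric interpolation between the two one-sided bounds $M/r$ and $C/|s|$ that produces the sharp constant $\sqrt{M^2+C^2}$: inside the right half-plane this is achieved by the $\min$-analysis above, while outside it the Taylor expansion has to be anchored at a point on the vertical line through $\lambda$, so that it is the $C/|s|$ bound (and not $M/r$) that controls the geometric series.
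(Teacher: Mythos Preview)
Your proof is correct and follows essentially the same route as the paper's: Hille--Yosida combined with the hypothesis \eqref{eqqq1} gives the $\min(M/r,C/|s|)$ bound on the open right half-plane, yielding $|\lambda|\,\|R(\lambda,A)\|\le\sqrt{M^2+C^2}$ there, and then a Taylor expansion of the resolvent (Proposition \ref{prop:main1}) centred at a point $\eps+iy$ just to the right of the imaginary axis extends the estimate into the sector $\{\Re\lambda\le 0,\ |\Re\lambda|/|\Im\lambda|\le q/C\}$ with the extra factor $(1-q)^{-1}$. The only cosmetic differences are that the paper first proves the sector inclusion and then the bound (you derive the inclusion as a by-product by letting $q\uparrow 1$), and that the paper introduces an auxiliary $q'\in(q,1)$ before passing to the limit, whereas you pass directly via $\eps\downarrow 0$; both yield the same constants.
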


\begin{proof}
This is essentially contained in the proof of (b) $\Rightarrow$ (a) of \cite[Theorem 1.1.23]{F}, so we only indicate minor modifications to reach the quantitative version as stated.

As in \cite[page 21]{F}, the Hille--Yosida Theorem \ref{thm:Hille-Yosida} gives the bound
\begin{equation*}
  \|R(\lambda,A)\|\le\frac{M}{\Re\lambda}\quad\text{for}\quad\Re\lambda>0.
\end{equation*}
Combined with assumption \eqref{eqqq1}, it follows that
\begin{equation*}
  \|R(\lambda,A)\|\le\min\bigg\{\frac{M}{\Re\lambda},\frac{C}{|\Im\lambda|}\bigg\}.
\end{equation*}
Now either $\Re\lambda\ge\frac{M}{\sqrt{M^2+C^2}}|\lambda|$ or $|\Im\lambda|\ge\frac{C}{\sqrt{M^2+C^2}}|\lambda|$ holds and therefore
\begin{equation}\label{eq:bdd. resolv. 1}
  \|R(\lambda,A)\|\le\frac{\sqrt{C^2+M^2}}{|\lambda|}\quad\text{for}\;\Re\lambda>0.
\end{equation}
(The argument in \cite[page 21]{F} is similar but gives the slightly larger constant $\sqrt{2}(C+M)$ in place of $\sqrt{C^2+M^2}$.)

Let then $q\in(0,1)$ and 
\begin{equation*}\tag{$*$}
  \Re\lambda\le0,\qquad |\arg(\lambda)|\le\frac{\pi}{2}+\arctan(\frac{q}{C})
\end{equation*}
thus $|\Re\lambda|/|\Im\lambda|\leq q/C$. For any $q'\in(q,1)$, \cite[page 22]{F} obtains the estimates
\begin{equation*}
    \|R(\lambda,A)\|\le\frac{C}{1-q'}\frac{1}{|\Im\lambda|},\qquad
    \frac{1}{|\Im\lambda|^2}\leq\Big(\frac{q^2}{C^2}+1\Big)\frac{1}{|\lambda|^2}.
\end{equation*}
Taking the limit $q'\to q$, we conclude that
\begin{equation}\label{eq:bdd. resolv. 2}
  \|R(\lambda,A)\|\le\frac{C}{1-q}\frac{\sqrt{q^2+C^2}}{C}\frac{1}{|\lambda|}\le\frac{\sqrt{C^2+M^2}}{1-q}\frac{1}{|\lambda|}
\end{equation}
for all $\lambda$ as in $(*)$.
By combining estimates \eqref{eq:bdd. resolv. 1} and \eqref{eq:bdd. resolv. 2} we deduce that
\begin{equation*}
  \|\lambda R(\lambda,A)\|\leq\frac{\sqrt{C^2+M^2}}{1-q}
\end{equation*}
for all $q\in(0,1)$ and all $\lambda\in\C$ such that $|\arg(\lambda)|\le\frac{\pi}{2}+\arctan(\frac{q}{C})$.
\end{proof}

For linear operators satisfying the conclusions of Lemma \ref{lem:main1} and appropriate paths $\gamma$, the exponential function ``$e^{tA}$'' can now be defined via the Cauchy integral formula. In particular, we write
\begin{equation}\label{eq:Cauchy int.}
  T(z):=e^{zA}=\frac{1}{2\pi i}\int_{\gamma}e^{\mu z}R(\mu,A)\;d\mu,
\end{equation}
where $z\in\Sigma_{\delta}$ and $\gamma$ is a smooth curve in $\Sigma_{\frac{\pi}{2}+\delta}\subset\rho(A)$ running from $\infty e^{-i(\pi/2+\delta')}$ to $\infty e^{i(\pi/2+\delta')}$ for some $\delta'\in(|\arg(z)|,\delta)$.

\begin{lemma}[\cite{EN}, Proposition 4.3 and \cite{P}, Chapter 2, Theorem 5.2]\label{lem:main2}
Let $(A,D(A))$ be a linear operator in $X$ such that $\Sigma_{\frac{\pi}{2}+\delta}\subset\rho(A)$ for \; $\delta=\arctan(\frac{1}{C})$, where $C\ge 1$, and the resolvent of $A$ satisfies
\begin{equation}\label{eqqqq2}
  \|\mu R(\mu,A)\|\leq\frac{\sqrt{C^2+M^2}}{1-q}
\end{equation}
for all $q\in(0,1)$, some $M\ge 1$ and all $\mu\in\C$ such that $|\arg(\mu)|\le\frac{\pi}{2}+\arctan(\frac{q}{C})$. Then, for all $z\in\Sigma_{\delta}$, the maps $T(z)$ are bounded linear operators on $X$ and
\begin{equation*}
  \|T(z)\|\leq 8\frac{\sqrt{C^2+M^2}}{1-u}\bigg(1+\log\bigg(\frac{C}{1-u}\bigg)\bigg),
\end{equation*}
for all $|\arg(z)|\le\arctan(\frac{u}{C})$ and all $u\in(0,1)$. 
\end{lemma}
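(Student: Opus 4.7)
The plan is to use the Cauchy integral representation \eqref{eq:Cauchy int.}, choose an appropriate contour depending on the target sector parameter $u$, and carefully balance the angle of the rays so that both the resolvent bound \eqref{eqqqq2} and decay of $e^{\mu z}$ are usable.

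Fix $z\in\Sigma_\delta$ with $|\arg z|\le\arctan(u/C)$ and choose an intermediate parameter $q:=(1+u)/2\in(u,1)$. Set $\delta':=\arctan(q/C)$, so $|\arg z|<\delta'<\delta$. Take $\gamma=\gamma_-\cup\gamma_0\cup\gamma_+$, where $\gamma_\pm$ are the two rays $\mu=\rho e^{\pm i(\pi/2+\delta')}$ for $\rho\ge r:=1/|z|$, and $\gamma_0$ is the circular arc of radius $r$ joining their endpoints. On all of $\gamma$ the resolvent hypothesis \eqref{eqqqq2} gives
\begin{equation*}
  \|R(\mu,A)\|\le\frac{\sqrt{C^2+M^2}}{(1-q)\,|\mu|}.
\end{equation*}
For $\mu$ on the rays, a short trigonometric computation shows
\begin{equation*}
  \Re(\mu z)\le-\rho|z|\,\sin\!\bigl(\arctan(q/C)-\arctan(u/C)\bigr)=:-\beta\,\rho|z|,
\end{equation*}
and combining the tangent subtraction formula with $\sin x\ge\tfrac{2}{\pi}x$ for $x\in(0,\pi/2)$ yields $\beta\gtrsim (q-u)/C=(1-u)/(2C)$.

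The ray contribution to $T(z)$ is then estimated by
\begin{equation*}
  \int_r^{\infty}e^{-\beta\rho|z|}\frac{\sqrt{C^2+M^2}}{(1-q)\rho}\,d\rho
  =\frac{\sqrt{C^2+M^2}}{1-q}\int_{\beta}^{\infty}\frac{e^{-s}}{s}\,ds
\end{equation*}
via the substitution $s=\beta\rho|z|$. Splitting the exponential integral at $s=1$ produces the familiar logarithmic divergence $\int_\beta^1 ds/s=\log(1/\beta)$ together with a bounded tail, so the rays contribute $\lesssim\frac{\sqrt{C^2+M^2}}{1-q}(1+\log(1/\beta))$. The arc is short: with $|e^{\mu z}|\le e^{r|z|}=e$ and $|\mu|=r$ on $\gamma_0$, the length-times-sup estimate gives a contribution bounded by a multiple of $\frac{\sqrt{C^2+M^2}}{1-q}$. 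Adding the two pieces and inserting $1-q=(1-u)/2$ and $\log(1/\beta)\lesssim\log(C/(1-u))$ produces the claimed bound.

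The main technical obstacle is the logarithmic factor: it arises from the fact that the resolvent estimate only gives $\|R(\mu,A)\|\lesssim 1/|\mu|$, so the ray integrals contain $\int d\rho/\rho$ which diverges logarithmically near the arc. The careful choice $r=1/|z|$ is what scales the lower cutoff to $\beta$ and exposes the dependence $\log(C/(1-u))$. A secondary delicate point is maintaining a quantitative lower bound on $\beta$ under the splitting $q=(1+u)/2$; for this one must avoid collapsing the difference $\arctan(q/C)-\arctan(u/C)$ into a bound that loses either the factor $1-u$ or picks up additional dependence on $M$. Once these two points are handled, independence of the contour from admissible choices of $q$ and $\delta'$ (by a standard Cauchy-theorem argument on the resolvent set) confirms that $T(z)$ is well defined as a bounded operator satisfying the stated estimate.
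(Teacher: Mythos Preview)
Your proof is correct and follows essentially the same route as the paper: the same contour (arc of radius $1/|z|$ plus two rays at angle $\pm(\frac{\pi}{2}+\arctan(q/C))$), the same decay estimate $\Re(\mu z)\le -\beta\rho|z|$ on the rays, the same substitution leading to the exponential integral $\int_\beta^\infty e^{-s}s^{-1}\,ds$, and the same final simplification. The only noteworthy difference is the choice of the intermediate parameter: the paper takes $q=\frac{1+Cu}{1+C}$ and bounds $\arctan(q/C)-\arctan(u/C)$ via the Mean Value Theorem, whereas you take the simpler midpoint $q=(1+u)/2$ and use the tangent subtraction formula; both choices yield $1-q\approx 1-u$ and $\beta\gtrsim(1-u)/C$, hence the same final estimate up to absolute constants.
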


\begin{proof}
Let $z\in\Sigma_{\frac{\pi}{2}+\delta}$ such that $|\arg(z)|\le\arctan(\frac{u}{C})$ for some $u\in(0,1)$. We consider an auxiliary number $q\in(u,1)$, to be chosen later.

We will verify that the integral in \eqref{eq:Cauchy int.} defining $T(z)$ converges uniformly in $\mathcal{L}(X)$ with respect to the operator norm. 
Since the functions $e^{\mu z}$ and $R(\mu,A)$ are analytic in $\Sigma_{\frac{\pi}{2}+\delta}$, this integral, if it exists, is by Cauchy's integral theorem independent of the particular choice of $\gamma$. Hence,
we may choose $\gamma$ to be the positively oriented boundary of $\Sigma_{\frac{\pi}{2}+\arctan(\frac{q}{C})}\setminus B_{\frac{1}{|z|}}(0)$. Now, we decompose $\gamma$ in three parts given by the arc $\widetilde{\gamma}$ formed by the boundary of the disk and the two rays going to infinity (this follows by change of variables $\mu=re^{\pm i(\frac{\pi}{2}+\arctan(\frac{q}{C}))}$):
\begin{equation}\label{eq*4}
\begin{split}
  \frac{1}{2\pi i}&\int_{\gamma}e^{\mu z}R(\mu,A)\;d\mu=\frac{1}{2\pi i}\int_{\widetilde{\gamma}}e^{\mu z}R(\mu,A)\;d\mu  \\
  &+\frac{1}{2\pi i}\int_{\frac{1}{|z|}}^{\infty}e^{re^{i(\frac{\pi}{2}+\arctan(\frac{q}{C}))}z}R(re^{i(\frac{\pi}{2}+\arctan(\frac{q}{C}))},A)e^{i(\frac{\pi}{2}+\arctan(\frac{q}{C}))}\;dr  \\
  &-\frac{1}{2\pi i}\int_{\frac{1}{|z|}}^{\infty}e^{re^{-i(\frac{\pi}{2}+\arctan(\frac{q}{C}))}z}R(re^{-i(\frac{\pi}{2}+\arctan(\frac{q}{C}))},A)e^{-i(\frac{\pi}{2}+\arctan(\frac{q}{C}))}\;dr.
\end{split}
\end{equation}

Since $0<u<q<1\leq C$, we have $|\arg(z)|\leq\arctan(\frac{u}{C})<\arctan(\frac{q}{C})<\arctan(1)=\frac{\pi}{4}$ and thus
\begin{equation*}
  \frac{\pi}{2}+\arctan\bigg(\frac{q}{C}\bigg)\pm\arg(z)\in\bigg(\frac{\pi}{2},\pi\bigg).
\end{equation*}
Hence,
\begin{equation*}
\begin{split}
  \Re(re^{\pm i(\frac{\pi}{2}+\arctan(\frac{q}{C}))}z)&=\Re(re^{\pm i(\frac{\pi}{2}+\arctan(\frac{q}{C}))}|z|e^{i\arg(z)})  \\
  &= r|z|\cos\bigg(\frac{\pi}{2}+\arctan\bigg(\frac{q}{C}\bigg)+\arg(z)\bigg)  \\
  &\le r|z|\cos\bigg(\frac{\pi}{2}+\arctan\bigg(\frac{q}{C}\bigg)-\arctan\bigg(\frac{u}{C}\bigg)\bigg)  \\
  &=-r|z|\sin\bigg(\arctan\bigg(\frac{q}{C}\bigg)-\arctan\bigg(\frac{u}{C}\bigg)\bigg),
\end{split}
\end{equation*}
where
\begin{equation*}
  \arctan\bigg(\frac{q}{C}\bigg)-\arctan\bigg(\frac{u}{C}\bigg)
  =\int_{\frac{u}{C}}^{\frac{q}{C}}\frac{dx}{1+x^2}
  \geq\int_{\frac{u}{C}}^{\frac{q}{C}}dx =\frac{q-u}{C}.
\end{equation*}
Since $\sin x\geq\frac{2}{\pi}x$ for $x\in[0,\frac{\pi}{2}]$, it further follows that
\begin{equation*}
  \Re(re^{\pm i(\frac{\pi}{2}+\arctan(\frac{q}{C}))}z)
  \leq -r|z|\eps',\qquad\eps':= \frac{2(q-u)}{\pi C}>0.
\end{equation*}
Combining this with \eqref{eqqqq2} and \eqref{eq*4}, and denoting $\widetilde M:=\frac{\sqrt{C^2+M^2}}{1-q}$, we get
\begin{align}\label{final ineq. 1}
  \bigg\|\frac{1}{2\pi i}\int_{\gamma}e^{\mu z}R(\mu,A)\;d\mu\bigg\| &\le\frac{\widetilde{M}}{2\pi}\int_{\widetilde{\gamma}}e^{\Re(\mu z)}\frac{|d\mu|}{|\mu|}
 +\frac{\widetilde{M}}{\pi}\int_{\frac{1}{|z|}}^{\infty}e^{-\eps'r|z|}\frac{dr}{r} \nonumber \\
  &\le\frac{\widetilde{M}|z|}{2\pi}\int_{\widetilde{\gamma}}e^{|z^{-1}z|}\;|d\mu|
  +\frac{\widetilde{M}}{\pi}\int_{\eps'}^{\infty}e^{-s}\frac{ds}{s} \nonumber \\
  &\le e\widetilde{M}+\frac{\widetilde{M}}{\pi}\Big(\int_{\eps'}^1 \frac{ds}{s}+\int_{1}^{\infty}e^{-s}ds\Big) \nonumber \\
  &= e\widetilde{M}+\frac{\widetilde{M}}{\pi}\Big(\log\frac{1}{\eps'}+e^{-1}\Big)  \nonumber \\
  &= \widetilde M\bigg(e+\frac{1}{e\pi}\bigg)+\frac{\widetilde M}{\pi}\log\bigg(\frac{1}{\eps'}\bigg) \nonumber  \\
  &\leq\widetilde M\bigg(3+\frac{1}{\pi}\log\bigg(\frac{1}{\eps'}\bigg)\bigg).
\end{align}
We now choose $q=\frac{1+Cu}{1+C}\in(u,1)$, so that
\begin{equation*}
  1-q=\frac{C(1-u)}{1+C},\qquad q-u=\frac{1-u}{1+C}.
\end{equation*}
Recalling that $C\geq 1$, it follows that
\begin{equation*}
  \widetilde M=\frac{\sqrt{C^2+M^2}}{1-q}=\frac{1+C}{C}\frac{\sqrt{C^2+M^2}}{1-u}\leq 2\frac{\sqrt{C^2+M^2}}{1-u}
\end{equation*}
and
\begin{equation*}
\begin{split}
  \log\frac{1}{\eps'}
  &=\log\frac{\pi C}{2(q-u)}
  =\log\frac{\pi C(1+C)}{2(1-u)} \\
  &=\log\pi+\log C+\log\frac{1+C}{2}+\log\frac{1}{1-u},
\end{split}
\end{equation*}
where $\log\frac{1+C}{2}\leq \log C$. Hence
\begin{equation*}
  3+\frac{1}{\pi}\log\bigg(\frac{1}{\eps'}\bigg)
  \leq 3+\frac{\log\pi}{\pi}+\frac{2}{\pi}\Big(\log C+\log\frac{1}{1-u}\Big)
  \leq 4+\log\frac{C}{1-u}
\end{equation*}
and finally
\begin{equation*}
  \widetilde M\bigg(3+\frac{1}{\pi}\log\bigg(\frac{1}{\eps'}\bigg)\bigg)
  \leq 8\frac{\sqrt{C^2+M^2}}{1-u}\Big(1+\log\frac{C}{1-u}\Big).
\end{equation*}
Together with \eqref{final ineq. 1}, this shows that
This shows that the integral defining $(T(z))$ converges in $\mathcal{L}(X)$ absolutely and uniformly for every $z\in\C$ such that $|\arg(z)|\le\arctan(\frac{u}{C})$, and proves the asserted bound.
\end{proof}

\begin{remark}\label{rmk:main1}
As shown in \cite[Proposition 4.3]{EN} (see also \cite[Chapter 2, Theorem 5.2]{P}) the previous family of bounded linear operators $T(z)$ also satisfies properties \ref{prop (i)}, \ref{prop (ii)} and \ref{prop (iii)} of Definition \ref{def:hol.semi.}.
\end{remark}

The following is a variant of \cite[Corollary 3.7.12]{ABHN} and \cite[Chapter 2, Theorem 5.2]{P}; except for the quantitative bound, it is stated in the same form in \cite[Lemma 1.1.28]{F}:

\begin{theorem}\label{thm:hol.semi. 1}
Let $A$ be the infinitesimal generator of a strongly continous semigroup $T(t)$ with $\|T(t)\|\le M$, $M\ge1$. Suppose that there exists a constant $C\ge1$ such that $is\in\rho(A)$ for $|s|>0$ and 
\begin{equation}\label{eqq1}
  \|R(is,A)\|\le\frac{C}{|s|}.
\end{equation}
Then $A$ generates a bounded holomorphic semigroup $T(z)$, defined at every $z\in\C$ such that $|\arg(z)|\lesssim\frac{1}{CM}$, and satisfying the bound
\begin{equation*}
  \|T(z)\|\lesssim CM(1+\log(CM)).
\end{equation*}
\end{theorem}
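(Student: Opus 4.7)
The plan is to deduce the theorem from Lemmas \ref{lem:main1} and \ref{lem:main2} by first upgrading the hypothesis $\|R(is,A)\|\le C/|s|$, which only controls the resolvent on the imaginary axis, to an estimate $\|R(r+is,A)\|\le C'/|s|$ valid on the entire open right half-plane, with $C'\lesssim CM$. This is exactly the form required as the hypothesis \eqref{eqqq1} of Lemma \ref{lem:main1}; once it is established, both subsequent lemmas apply in a near-mechanical fashion, and this bootstrap is the only substantive new step.

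For the bootstrap, fix $0\ne s\in\R$ and split according to the size of $r>0$. If $r\ge|s|/(2C)$, then the Hille--Yosida bound from Theorem \ref{thm:Hille-Yosida} applied to the bounded semigroup $T(t)$ gives $\|R(r+is,A)\|\le M/r\le 2CM/|s|$. Otherwise $0<r<|s|/(2C)$, and we Taylor-expand the resolvent around $\mu_0=is$ using Proposition \ref{prop:main1}: since $r\|R(is,A)\|\le rC/|s|<1/2$, the series converges and sums to
\[
  \|R(r+is,A)\|\le\frac{\|R(is,A)\|}{1-r\|R(is,A)\|}\le\frac{2C}{|s|}.
\]
In either case we obtain the claimed bound with $C':=2CM$, which satisfies $C'\ge 1$ since $C,M\ge 1$.

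Lemma \ref{lem:main1} now yields $\Sigma_{\pi/2+\arctan(1/C')}\subset\rho(A)$ together with precisely the resolvent estimate required by the hypothesis of Lemma \ref{lem:main2} (applied with the same $C'$ and the same $M$). Choosing $u=1/2$ in the latter produces, via Remark \ref{rmk:main1}, a bounded holomorphic semigroup $T(z)$ on $\Sigma_{\arctan(1/(4CM))}$ extending $T(t)$, with
\[
  \|T(z)\|\lesssim\sqrt{{C'}^2+M^2}\bigl(1+\log(2C')\bigr)\lesssim CM\bigl(1+\log(CM)\bigr).
\]
Finally, since $\arctan(x)\ge x/2$ for $x\in(0,1)$, the sector $\Sigma_{\arctan(1/(4CM))}$ contains $\{z:|\arg z|\le 1/(8CM)\}$, giving the claimed range $|\arg z|\lesssim 1/(CM)$ with a concrete implicit constant.
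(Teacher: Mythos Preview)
Your proof is correct and follows the same overall architecture as the paper: upgrade the imaginary-axis resolvent bound to a half-plane estimate $\|R(r+is,A)\|\le C'/|s|$ with $C'\approx CM$, then feed this into Lemmas~\ref{lem:main1} and~\ref{lem:main2} (with $u=\tfrac12$) and Remark~\ref{rmk:main1}. The only difference is in how the bootstrap is carried out: the paper uses the resolvent equation (Theorem~\ref{thm:resol.eq.}) together with the Hille--Yosida bound to obtain, in one line, $\|R(r+is,A)\|\le Cr\|R(r+is,A)\|/|s|+C/|s|\le C(M+1)/|s|$, whereas you split according to whether $r\ge|s|/(2C)$ (using Hille--Yosida directly) or $r<|s|/(2C)$ (using the power-series expansion of Proposition~\ref{prop:main1} around $is$), arriving at the comparable constant $C'=2CM$. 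Both arguments are elementary and yield the same quantitative conclusion; the paper's resolvent-equation route is marginally shorter, while your case analysis makes the role of the Hille--Yosida bound and the imaginary-axis hypothesis more explicitly separated.
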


\begin{proof}
The proof follows the idea of the proof of \cite[Lemma 1.1.28]{F}. Since a self-contained proof is not much longer than an indication of the relevant changes, we give it for the reader's convenience.

The Hille--Yosida Theorem \ref{thm:Hille-Yosida} shows that
\begin{equation}\label{eqq2}
  \lambda\in\rho(A),\quad\|R(\lambda,A)\|\le\frac{M}{\Re\lambda}\quad\text{for all }\lambda\in\C\text{ with }\Re\lambda>0.
\end{equation}
If $r>0$ and $s\ne 0$, the resolvent equation (Theorem \ref{thm:resol.eq.}) provides the identity
\begin{equation*}
  R(r+is,A)=(is-(r+is))R(r+is,A)R(is,A)+R(is,A).
\end{equation*}
Taking the operator norm on both sides and applying \eqref{eqq1} and \eqref{eqq2}, we deduce the bound
\begin{equation*}
\begin{split}
  \|R(r+is,A)\| &\le Cr\|R(r+is,A)\||s|^{-1}+C|s|^{-1} \\
  &\le CM|s|^{-1}+C|s|^{-1}=C(M+1)|s|^{-1}=\widetilde C|s|^{-1}.
\end{split}
\end{equation*}
where $\widetilde C:=C(M+1)$. Hence, $A$ satisfies the assumptions and therefore the conclusions of Lemma \ref{lem:main1}. Moreover, by Lemma \ref{lem:main2} we obtain
\begin{equation*}
\begin{split}
  \|T(z)\|&\lesssim\frac{\sqrt{\widetilde C^2+M^2}}{1-u}\bigg(1+\log\bigg(\frac{\widetilde C}{1-u}\bigg)\bigg)  \\
  &\lesssim(\widetilde{C}+M)(1+\log(\widetilde{C}))\quad\text{for}\quad |\arg(z)|\lesssim\frac{1}{\widetilde{C}},
\end{split}
\end{equation*}
where in the last step we chose $u=\frac{1}{2}$. Therefore $A$ generates a bounded holomoprhic semigroup with the stated properties by Remark \ref{rmk:main1}.
\end{proof}

The following is a quantitative version of a theorem of Kato \cite{K}. We follow the proof given in \cite[Lemma 1.2.2]{F}:

\begin{theorem}\label{lem:hol.semi. 2}
Let $(T(t))$ be a strongly continuous semigroup with $\|T(t)\|\le M$, $M\ge 1$. Suppose that there is a complex number $\zeta$ with $|\zeta|=1$ and $K\in(0,\infty)$ such that
\begin{equation}\label{eq1}
  \|(\zeta I-T(t))x\|\ge \|x\|/K\qquad\text{for}\;x\in X\;\text{and all}\;t>0.
\end{equation}
Then $(T(t))$ can be extended to a bounded holomorphic semigroup. In particular,
\begin{equation*}
  \|T(z)\|\lesssim\widetilde C(1+\log(\widetilde C)),
\end{equation*}
for every $z\in\C$ such that $|\arg(z)|\lesssim\frac{1}{\widetilde{C}}$, where $\widetilde{C}:=M^2 K>0$.
\end{theorem}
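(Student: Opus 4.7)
The plan is to reduce to Theorem \ref{thm:hol.semi. 1} by establishing a resolvent bound of the form $\|R(i\xi, A)\| \lesssim MK/|\xi|$ for every $\xi \neq 0$. Once this is in hand, I set $C := c\,MK$ for a suitable absolute constant $c \ge 2$ (so that $C \ge 1$, using $M \ge 1$ and $K \ge 1/2$) and invoke Theorem \ref{thm:hol.semi. 1}. It delivers a bounded holomorphic semigroup on the sector $|\arg z| \lesssim 1/(CM) \approx 1/\widetilde{C}$ together with the quantitative bound $\|T(z)\| \lesssim CM(1 + \log(CM)) \approx \widetilde{C}(1 + \log \widetilde{C})$, since $CM = c\,M^2 K = c\,\widetilde{C}$.

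The heart of the argument is the resolvent estimate. I would work in the open right half-plane, where Theorem \ref{thm:Hille-Yosida} gives $\rho(A)\supseteq\{\Re\lambda>0\}$ with $\|R(\lambda,A)\|\le M/\Re\lambda$ and the Laplace representation $R(\lambda,A)=\int_0^\infty e^{-\lambda u}T(u)\,du$, from which a direct computation yields the identity
\begin{equation*}
  (e^{\lambda t}I - T(t))R(\lambda, A) = \int_0^t e^{(t-u)\lambda}T(u)\,du,
\end{equation*}
whose right-hand side has operator norm at most $Mt\,e^{t\Re\lambda}$. To bring the hypothesis into play, fix $\xi \neq 0$ and choose $t = t(\xi) > 0$ as the smallest solution of $e^{i\xi t}=\zeta$, so that $t \le 2\pi/|\xi|$; the case $\zeta = 1$ can be excluded at the outset, since then strong continuity forces $X = \{0\}$ by letting $t \to 0^+$ in the hypothesis. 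Writing $\lambda = \eps + i\xi$ with $\eps > 0$, the splitting
\begin{equation*}
  e^{\lambda t}I - T(t) = e^{\eps t}(\zeta I - T(t)) + (e^{\eps t} - 1)T(t)
\end{equation*}
bridges the operator appearing in the identity with the one from the hypothesis. Applying both sides to $R(\lambda, A)x$, using $\|(\zeta I - T(t))y\|\ge\|y\|/K$ for the lower bound and $\|T(t)\|\le M$ to absorb the perturbation, I arrive at
\begin{equation*}
  \Big(\frac{e^{\eps t}}{K} - (e^{\eps t} - 1)M\Big)\|R(\lambda, A)x\| \le Mt\,e^{\eps t}\|x\|.
\end{equation*}
For $\eps t$ sufficiently small (quantitatively, $\eps t \lesssim 1/(MK)$ suffices) the coefficient on the left remains $\gtrsim 1/K$, yielding $\|R(\eps + i\xi, A)\|\lesssim MK\,t \lesssim MK/|\xi|$ uniformly in the admissible range of $\eps$. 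Finally, the openness of $\rho(A)$ (Proposition \ref{prop:main1}) and continuity of the resolvent map let me push $\eps \to 0^+$ and conclude that $i\xi \in \rho(A)$ with $\|R(i\xi, A)\|\lesssim MK/|\xi|$.

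The main obstacle is the asymmetry in the hypothesis: one is told only that $\zeta I - T(t)$ is bounded below, not that it is invertible, so direct inversion at $\lambda = i\xi$ is not available. My plan sidesteps this by working entirely in the right half-plane where $R(\lambda, A)$ already exists by Hille--Yosida, and by using the bounded-below hypothesis merely as an a priori estimate on $\|R(\lambda, A)\|$; the passage to the imaginary axis is then a soft consequence of the openness of $\rho(A)$. The quantitative tuning of the smallness condition on $\eps$ (so as to keep the coefficient $e^{\eps t}/K - (e^{\eps t}-1)M$ comparable to $1/K$) is the only place where $M$ and $K$ interact nontrivially, and it is precisely this interaction that produces the factor $M^2K$ in $\widetilde{C}$ after feeding the bound into Theorem \ref{thm:hol.semi. 1}.
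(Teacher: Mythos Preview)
Your argument is correct and reaches the same resolvent estimate $\|R(i\xi,A)\|\lesssim MK/|\xi|$ that the paper feeds into Theorem~\ref{thm:hol.semi. 1}, but the route to it is genuinely different. The paper, following Kato, first shows that $A-i\alpha$ is bounded below on $D(A)$ via the inequality $\|(T(t)-e^{it\alpha}I)x\|\le Mt\|(A-i\alpha I)x\|$ (obtained from $e^{-it\alpha}T(t)x-x=\int_0^t e^{-is\alpha}T(s)(A-i\alpha)x\,ds$), and then runs a separate topological argument: it introduces the sets $F_\eps=\{w:\|(A-w)x\|>\eps\|x\|\}$ and $G_\eps=\rho(A)\cap F_\eps$, proves that $G_\eps$ is clopen in $F_\eps$, and uses connectedness to drag the point $i\alpha$ into $\rho(A)$ from the right half-plane. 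You bypass this entirely by staying inside $\rho(A)$: the Laplace identity $(e^{\lambda t}I-T(t))R(\lambda,A)=\int_0^t e^{(t-u)\lambda}T(u)\,du$ together with the splitting $e^{\lambda t}I-T(t)=e^{\eps t}(\zeta I-T(t))+(e^{\eps t}-1)T(t)$ gives a uniform bound on $\|R(\eps+i\xi,A)\|$ for small $\eps>0$, and then the quantitative openness in Proposition~\ref{prop:main1} (the ball of radius $1/\|R(\mu,A)\|$ about $\mu$ lies in $\rho(A)$) lets you absorb $i\xi$ into $\rho(A)$ directly. Your approach is more self-contained and avoids the clopen/connectedness machinery; the paper's approach, on the other hand, is the one that adapts to the more general hypothesis where \eqref{eq1} is assumed only for $0<t<\delta$ (see the remark following the theorem), since it never needs the full Laplace integral.
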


Note that any constant $K\in(0,\infty)$ in \eqref{eq1} must necessarily satisfy $K\geq\frac12$.  Indeed, as $t\to 0$, we have $T(t)x\to x$, and hence
\begin{equation*}
 \|x\|/K\leq \|(\zeta I-T(t))x\| \leq |\zeta| \|x\|+\|T(t)x\|\to 2\|x\|.
\end{equation*}

\begin{proof}
We will borrow the results of some intermediate steps of the proof of the implication (c) $\Rightarrow$ (a) of \cite[Lemma 1.2.2]{F}. That proof is set up for a more general semigroup with a growth bound $\|T(t)\|\le Me^{\omega t}$, so we can simply take $\omega=0$. Moreover, \eqref{eq1} is only assumed for $t\in(0,\delta)$, so we can take $\delta=\infty$, thus $1/\delta=0$. This will simplify some of the formulas of \cite{F}.

As in \cite{F}, we take $\theta_1\in[0,2\pi)$ and $\theta_2:=2\pi-\theta_1\in(0,2\pi]$ such that $e^{i\theta_1}=e^{-i\theta_2}=\zeta$, where $\zeta$ is the number appearing in \eqref{eq1}. In \cite[page 30]{F}, one then considers $\alpha>\max\{\theta_1/\delta,\omega\theta_1\}$. In our case, this becomes simply $\alpha>0$.
In \cite[page 31]{F}, one further chooses $\eps>0$, $\xi_0>\omega$, and $\alpha_1$ such that 
\begin{equation*}
  \frac{\alpha_1-\omega\theta_1}{MK\theta_1}-\xi_0>\eps.
\end{equation*}
For $\omega=0$, we can take $\alpha_1>0$ as small as we like, then $\eps,\xi_0\in(0,\frac{\alpha_1}{2MK\theta_1})$, which clearly satisfy the requirements. The second-to-last paragraph of \cite[page 31]{F} then concludes that
\begin{equation*}
  i\alpha\in\rho(A),\quad \| R(i\alpha,A) \|\leq \frac{MK\theta_1}{\alpha}
\end{equation*}
for all $\alpha>\alpha_1$. Since we can take $\alpha_1>0$ as small as we like, this holds for every $\alpha>0$. As in \cite[page 31]{F}, the same argument for $-\alpha$ in place of $\alpha$, and $\theta_2$ in place of $\theta_1$, leads to
\begin{equation*}
  -i\alpha\in\rho(A),\quad \| R(-i\alpha,A) \|\leq \frac{MK\theta_2}{\alpha},\qquad\text{for all}\quad\alpha>0.
\end{equation*}
A combination of the last two displays then shows that
\begin{equation*}
  i\alpha\in\rho(A),\quad \| R(i\alpha,A) \|\leq \frac{C}{|\alpha|},\qquad\text{for all}\quad\alpha\in\R\setminus\{0\},
\end{equation*}
where $C:=MK\max\{\theta_1,\theta_2\}$. Note that
\begin{equation*}
  \pi= \frac12(\theta_1+\theta_2)\leq  \max\{\theta_1,\theta_2\} \leq\theta_1+\theta_2=2\pi.
\end{equation*}
Hence the assumptions of Theorem \ref{thm:hol.semi. 1} are satisfied with $M$ and $C$ as just defined, and the said theorem implies that $(T(t))$ can be extended to a bounded holomorphic semigroup with the estimate
\begin{equation*}
\begin{split}
  \|T(z)\|\lesssim \widetilde C(1+\log(\widetilde C)),
\end{split}
\end{equation*}
for every $z\in\C$ such that $|\arg(z)|\lesssim\frac{1}{\widetilde{C}}$, where $\widetilde{C}:=M^2 K>0$.
\end{proof}

\begin{remark}
A version of Theorem \ref{lem:hol.semi. 2} is also true (see \cite[Lemma 1.2.2]{F} and \cite{K}), where \eqref{eq1} is only assumed for $0<t<\delta$, but we only treat the case stated, since it simplifies the quantitative conclusions and it is the only case that we need for our applications.
\end{remark}

\begin{corollary}\label{coro:main}
Let $(T(t))$ be a strongly continuous semigroup with $\|T(t)\|\\\le M$, for some $M\ge 1$. In addition, suppose that there exists $0<\eps\le 2$ such that
\begin{equation}\label{eq:one of main}
  \|T(t)-I\|\le 2-\eps\qquad\text{for all}\;t>0.
\end{equation}
Then $(T(t))$ can be extended to a bounded holomorphic semigroup in $\Sigma_\theta$, where $\theta\approx\frac{1}{\widetilde{C}}$ and $\widetilde{C}=\frac{M^2}{\eps}>0$. In particular, 
\begin{equation}\label{eq:main bdd. 1}
  \|T(z)\|\lesssim\frac{M^2}{\eps}\bigg(1+\log\bigg(\frac{M}{\eps}\bigg)\bigg),
\end{equation}
for every $z\in\C$ such that $|\arg(z)|\lesssim\frac{1}{\widetilde{C}}$ and
\begin{equation}\label{eq:main bdd.}
  \sup_{t>0}\|t T'(t)\|\lesssim\frac{M^4}{\eps^2}\bigg(1+\log\bigg(\frac{M}{\eps}\bigg)\bigg).
\end{equation}
\end{corollary}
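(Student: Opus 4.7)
The plan is to verify the hypothesis of Kato's characterization (Theorem \ref{lem:hol.semi. 2}) for the specific choice $\zeta = -1$, apply that theorem directly to obtain \eqref{eq:main bdd. 1}, and finally derive the derivative bound \eqref{eq:main bdd.} via Cauchy's integral formula on a disk inscribed in the sector of analyticity.

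For the first step, with $\zeta = -1$ we write $\zeta I - T(t) = -(I + T(t)) = -\bigl(2I - (I - T(t))\bigr)$, so the reverse triangle inequality and hypothesis \eqref{eq:one of main} give
$$\|(\zeta I - T(t))x\| \ge 2\|x\| - \|I - T(t)\|\,\|x\| \ge \eps\|x\| = \|x\|/K, \qquad K := 1/\eps.$$
Since $0 < \eps \le 2$ we have $K \ge 1/2$, and hence Theorem \ref{lem:hol.semi. 2} applies with $\widetilde C = M^2 K = M^2/\eps$ and yields
$$\|T(z)\| \lesssim \widetilde C\,(1 + \log \widetilde C) = \frac{M^2}{\eps}\Bigl(1 + \log\frac{M^2}{\eps}\Bigr)$$
on $|\arg z| \lesssim 1/\widetilde C = \eps/M^2$. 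A short case distinction using $M \ge 1$ and $\eps \in (0,2]$ shows $1 + \log(M^2/\eps) \lesssim 1 + \log(M/\eps)$ (when $\eps \le 1$ we have $\log M \le \log(M/\eps)$; when $1 < \eps \le 2$ both $\log M$ and $1 + \log(M/\eps)$ are of order $1 + \log M$), which yields \eqref{eq:main bdd. 1}.

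For \eqref{eq:main bdd.}, fix $t > 0$ and let $\alpha \approx 1/\widetilde C$ denote the half-opening of the sector on which \eqref{eq:main bdd. 1} is valid. The closed disk $\overline{B(t,\, t\sin\alpha)}$ is tangent to the two bounding rays of $\Sigma_\alpha$ and therefore contained in $\overline{\Sigma_\alpha}$, so $T$ is holomorphic on a neighbourhood of this disk and is uniformly controlled there by \eqref{eq:main bdd. 1}. Cauchy's integral formula for the derivative of an $\mathcal{L}(X)$-valued holomorphic function gives
$$T'(t) = \frac{1}{2\pi i}\oint_{|\zeta - t| = t\sin\alpha}\frac{T(\zeta)}{(\zeta - t)^2}\,d\zeta,$$
and using $\sin\alpha \approx \alpha \approx 1/\widetilde C$ we obtain
$$t\,\|T'(t)\| \le \frac{1}{\sin\alpha}\sup_{|\zeta - t| = t\sin\alpha}\|T(\zeta)\| \lesssim \widetilde C \cdot \widetilde C\,(1 + \log \widetilde C) \approx \frac{M^4}{\eps^2}\Bigl(1 + \log\frac{M}{\eps}\Bigr),$$
which is \eqref{eq:main bdd.}. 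The only delicate point is that the implicit constants in the opening of the sector from Theorem \ref{lem:hol.semi. 2} allow us to choose the radius $t\sin\alpha$ of the Cauchy contour inside the region of validity of \eqref{eq:main bdd. 1}; this is clear from the construction in the proof of that theorem, so no new obstacle arises.
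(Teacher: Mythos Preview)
Your proof is correct and follows essentially the same route as the paper: choose $\zeta=-1$ in Theorem~\ref{lem:hol.semi. 2} with $K=1/\eps$, then obtain the derivative bound by Cauchy's formula on the disk $B(t,t\sin\theta)$ inscribed in the sector. Your explicit case distinction justifying $1+\log(M^2/\eps)\lesssim 1+\log(M/\eps)$ is a detail the paper simply absorbs into the $\lesssim$ symbol.
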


\begin{proof}
We choose $\zeta=-1$. Since
\begin{equation*}
  \|(-I-T(t))x\|=\|-2x+(I-T(t))x\|\ge 2\|x\|-\|(I-T(t))x\|\ge\eps\|x\|,
\end{equation*}
we deduce from Theorem \ref{lem:hol.semi. 2} with $K=\frac{1}{\eps}\ge\frac{1}{2}$ that $(T(t))$ extends to a bounded holomorphic semigroup which satisfies the following estimate:
\begin{equation}\label{eq:main bddd. 1}
  \|T(z)\|\lesssim\frac{M^2}{\eps}\bigg(1+\log\bigg(\frac{M}{\eps}\bigg)\bigg),
\end{equation}
for every $z\in\C$ such that $|\arg(z)|\lesssim\frac{1}{\widetilde{C}}$, where $\widetilde{C}=\frac{M^2}{\eps}>0$.
By Cauchy's integral formula we get
\begin{equation}\label{eq:main bdd. 2}
  T'(t)=\frac{1}{2\pi i}\oint\limits_{\gamma}\frac{T(z)}{(t-z)^2}\;dz,
\end{equation}
where $\gamma$ is the boundary of the disk $B_{t \sin(\theta)}(t)$ for $t>0$ and $\theta\approx\frac{1}{\widetilde{C}}$ is the angle of the sector $\Sigma_{\theta}$ in which $T(z)$ is holomorphic. Using \eqref{eq:main bddd. 1} and \eqref{eq:main bdd. 2} we deduce that
\begin{equation*}
\begin{split}
  \|T'(t)\|&\lesssim\frac{1}{2\pi}\frac{M^2}{\eps}\bigg(1+\log\bigg(\frac{M}{\eps}\bigg)\bigg)\max_{z:|t-z|=t\sin{\theta}}\bigg|\frac{1}{(t-z)^2}\bigg|2\pi|t\sin(\theta)|  \\
  &=\frac{1}{t\sin(\theta)}\frac{M^2}{\eps}\bigg(1+\log\bigg(\frac{M}{\eps}\bigg)\bigg)
\end{split}  
\end{equation*}
Hence,
\begin{equation*}
\begin{split}
  \sup_{t>0}\|t T'(t)\|&\lesssim\frac{1}{\sin(\theta)}\frac{M^2}{\eps}\bigg(1+\log\bigg(\frac{M}{\eps}\bigg)\bigg)  \\
 &\lesssim \frac{M^4}{\eps^2}\bigg(1+\log\bigg(\frac{M}{\eps}\bigg)\bigg),
\end{split}
\end{equation*}
where in the last step $\sin(\theta)\approx\frac{1}{\widetilde{C}}$ if $\theta\approx\frac{1}{\widetilde{C}}$.
\end{proof}

\begin{remark}
The first part of Corollary \ref{coro:main} was proved by Kato in \cite{K} under the assumption of \eqref{eq:one of main} for $0<t<\delta$. The novelty here is the estimate \eqref{eq:main bdd. 1} and \eqref{eq:main bdd.}.
\end{remark}

\section{Quantitative analyticity of diffusion semigroups in uniformly convex spaces}\label{section:4}
In our discussions below we follow \cite{X} but we provide new details concerning the quantitative bounds. Let us recall the following definitions about symmetric diffusion semigroups, uniform convexity, martingale type and cotype:

\begin{definition}\label{def:sds}
Let $(\Omega,\mathcal{A},\mu)$ be a $\sigma$-finite measure space. An operator $T$ on $(\Omega,\mathcal{A},\mu)$ is a {\em symmetric Markovian operator} if it satisfies the following properties:
\begin{enumerate}
  \item[(1)] $T$ is a linear contraction on $L_p(\Omega)$ for every $1\le p\le\infty$;
  \item[(2)] $T$ is positivity preserving and $T1=1$; and  
  \item[(3)] $T$ is a self-adjoint operator on $L_2(\Omega)$.
\end{enumerate}
\end{definition}

\begin{definition}
Let $(\Omega,\mathcal{A},\mu)$ be a $\sigma$-finite measure space. By a {\em symmetric diffusion semigroup} on $(\Omega,\mathcal{A},\mu)$ in Stein's sense \cite[Section III.1]{S}, we mean a family $\{T_t\}_{t>0}$ of linear maps where each member of this family is a symmetric Markovian operator and satisfies the following two additional properties:
\begin{enumerate}
  \item[(4)] $T_t T_s=T_{t+s}$; 
  \item[(5)] $\lim_{t\rightarrow 0}T_t f=f$ in $L_2(\Omega)$ for every $f\in L_2(\Omega)$.
\end{enumerate}
\end{definition}

\begin{definition}
A Banach space $X$ is said to be uniformly convex if for every $\eps\in(0,2]$ there exists $\delta\in(0,1]$ such that, for any vectors $x,y$ in the closed unit ball (i.e. $\|x\|\le 1$ and $\|y\|\le 1$) with $\|x-y\|\ge\eps$, one has $\|x+y\|\le 2(1-\delta)$. In addition, $(X,\|\cdot\|)$ is said to be uniformly convex of power type $q$ with $2\le q<\infty$ and positive constant $\delta$ if the following inequality holds:
\begin{equation}\label{eq:Pisier}
  \bigg\|\frac{x+y}{2}\bigg\|^q+\delta\bigg\|\frac{x-y}{2}\bigg\|^q\le\frac{1}{2}(\|x\|^q+\|y\|^q),\qquad\forall x,y\in X.
\end{equation}
\end{definition}

Examples of uniformly convex spaces that satisfy \eqref{eq:Pisier} with $\delta=1$ are the following:
\begin{enumerate}[(1)]
  \item $X$ is a Hilbert space and $q=2$. By the Parallelogram law for inner product spaces it follows:
  \begin{equation*}
  \bigg\|\frac{x+y}{2}\bigg\|^2+\bigg\|\frac{x-y}{2}\bigg\|^2=\frac{1}{2}(\|x\|^2+\|y\|^2),\qquad\forall x,y\in X.
  \end{equation*}
  \item $X=L_q(\Omega)$ and  $2\le q<\infty$. We have the following Clarkson's inequality (see \cite[Corollary 2.29]{HNVW}):
  \begin{equation*}
  \bigg\|\frac{x+y}{2}\bigg\|_{L_q}^q+\bigg\|\frac{x-y}{2}\bigg\|_{L_q}^q\le\frac{1}{2}(\|x\|_{L_q}^q+\|y\|_{L_q}^q),\qquad\forall x,y\in X. 
  \end{equation*}
\end{enumerate}

\begin{definition}\label{Def:mtc}
Let $1<q<\infty$. A Banach space $X$ is of {\em martingale cotype $q$} if there exists a positive constant $C$ such that every finite $X$ valued $L_q$-martingale $(f_n)$ defined on some probability space satisfies the following inequality:
\begin{equation*}
  \sum_{n\ge1}\Exp\|f_n-f_{n-1}\|^q_X\le C^q\sup_{n}\Exp\|f_n\|^q_X,
\end{equation*}
where $\Exp$ denotes the expectation on the underlying probability space. We then must have $q\ge 2$. $X$ is of {\em martingale type $q$} if the reverse inequality holds.
\end{definition}

\begin{remark}
Following the notation in \cite{Pisier:1986}, given a Banach space $(X,\|\cdot\|)$ and $q\ge2$ we will denote the martingale cotype $q$ constant of $X$ by $\mathfrak{m}_{q,X}\in [1,\infty]$.
\end{remark}

\begin{theorem}[\cite{Pisier}, Theorem 3.1]\label{thm:Pisier}
Let $q$ be such that $2\le q<\infty$ and let $X$ be a Banach space.
Assume that $X$ is of martingale cotype $q$, namely:
\begin{equation*}
  \sum_{n\ge1}\Exp\|f_n-f_{n-1}\|^q_X\le \mathfrak{m}_{q,X}^q\sup_{n}\Exp\|f_n\|^q_X.
\end{equation*}
Then there exists an equivalent norm $|\cdot|$ on $X$ such that:
\begin{equation}\label{eq:Pisier 1}
\begin{cases}
  \forall x,y\in X,\quad\|x\|\le|x|\le \mathfrak{m}_{q,X}\|x\|  \\
  \text{and},\quad |\frac{x+y}{2}|^q+\|\frac{x-y}{2}\|^q\le\frac{|x|^q+|y|^q}{2}.
\end{cases}
\end{equation}
In particular, $(X,|\cdot|)$ is uniformly convex of power type $q$ with constant $\delta=\mathfrak{m}_{q,X}^{-q}$.
\end{theorem}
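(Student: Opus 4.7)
This is Pisier's classical renorming theorem, and the proof I propose follows his variational construction of the equivalent norm $|\cdot|$ in terms of $X$-valued dyadic martingales on a probability space.

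The guiding observation is that the sought uniform convexity inequality in \eqref{eq:Pisier 1} is the two-point specialisation of an iterated convexity statement: if $|\cdot|$ satisfies that inequality, then induction along a dyadic filtration yields, for every $X$-valued dyadic martingale $(f_n)_{n=0}^N$ with $f_0 = x$,
\begin{equation*}
  |x|^q + \sum_{n=1}^N \Exp\|df_n\|_X^q \le \Exp|f_N|^q.
\end{equation*}
The strategy is therefore to \emph{define} $|\cdot|$ by a variational formula that forces this displayed inequality by construction --- typically as a supremum, over all admissible $X$-valued dyadic martingales starting at $x$, of an expression of the shape $\Exp\Phi(f_N) - \sum_n \Exp\|df_n\|_X^q$ for a suitable dominating functional $\Phi$. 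A natural first candidate is $\Phi = \mathfrak{m}_{q,X}^q\|\cdot\|^q$, possibly followed by an iteration-to-fixed-point argument.

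With $|\cdot|$ so constructed, one checks the equivalence as follows: the trivial martingale $f_n\equiv x$ gives $\|x\|\le |x|$, while the martingale cotype~$q$ hypothesis $\sum_n \Exp\|df_n\|^q_X \le \mathfrak{m}_{q,X}^q \sup_n \Exp\|f_n\|^q$ is what keeps the variational supremum finite and, after tracking constants, yields $|x|\le \mathfrak{m}_{q,X}\|x\|$. The two-point uniform convexity inequality is then verified by picking near-optimal martingales $(f^{(x)}_n)$ and $(f^{(y)}_n)$ realising $|x|$ and $|y|$, concatenating them onto the one-step binary martingale with root $(x+y)/2$ and leaves $x,y$, and testing the definition of $|(x+y)/2|$ against this concatenation; the term $\|\tfrac{x-y}{2}\|^q$ is exactly what the first branching step contributes.

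The main obstacles are essentially two. First, verifying subadditivity (and hence that $|\cdot|$ is a genuine norm): the obvious candidate formula does not immediately yield the triangle inequality, and a further concatenation argument (together with standard density and completion) is required. Second, calibrating constants so that the upper equivalence constant is exactly the sharp $\mathfrak{m}_{q,X}$ rather than some worse power of it; this forces one to be careful about the normalisation of $\Phi$ and the precise class of admissible martingales. These technicalities are the heart of Pisier's original argument and are what make the renorming theorem nontrivial even though the two-point target inequality looks deceptively simple.
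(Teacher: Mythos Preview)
The paper does not prove this theorem at all: Theorem~\ref{thm:Pisier} is simply quoted from Pisier's original paper \cite{Pisier} (with the sharper constants tracked) and used as a black box. So there is no ``paper's own proof'' to compare your proposal against.

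That said, your sketch is a reasonable high-level outline of Pisier's variational/martingale construction of the equivalent norm, and the heuristics you describe (define $|\cdot|$ via a supremum over dyadic martingales starting at $x$, recover the two-point inequality by concatenating near-optimisers onto a one-step split, and use the martingale cotype hypothesis to get the upper equivalence bound) are essentially the right ideas. You are also correct that the two genuine difficulties are (i) showing the candidate functional is an actual norm and (ii) obtaining the sharp equivalence constant $\mathfrak{m}_{q,X}$ rather than a power thereof. If you were to flesh this out, you would need to be precise about the class of admissible martingales (dyadic, finite, on $\{-1,1\}^{\mathbb{N}}$) and about why homogeneity and the triangle inequality hold for the resulting functional; these are not automatic from the variational formula you hint at and require the careful normalisation in Pisier's argument.
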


\begin{remark}
The above result states that if $X$ has martingale cotype $q$, then it admits an equivalent norm that satisfies \eqref{eq:Pisier 1}. The converse to this is also true and it can be found in \cite[Remark 3.1]{Pisier} and \cite[Theorem 10.6 $(i)\implies(iii)$]{Pisier:2016}. In particular, if we assume that $X$ is uniformly convex of power type $q$ with constant $\delta$, then $X$ has martingale cotype $q$ with $\mathfrak{m}_{q,X}\le2\delta^{-\frac{1}{q}}$. As it can be seen below in our applications, we could work under the assumption of uniformly convex of power type $q$ spaces which would imply estimates in terms of the constant $\delta$.
\end{remark}

We will need two results in order to derive bounds for the family of operators $\{t\partial T_t\}_{t>0}$ on $Y=L_p(\Omega;X)$, where $1<p<\infty$, $X$ is a uniformly convex Banach space or a Banach space of martingale cotype $2\le q<\infty$. The first one is the following Rota's dilation theorem for positive Markovian operators, which we state following the formulation in \cite[Lemma 10]{X}:

\begin{lemma}[\cite{S}, Chapter IV and \cite{X}, Lemma 10]\label{lem:Rota}
Let $T=S^2$ with $S$ a  symmetric Markovian operator on $(\Omega,\mathcal{A}, \mu)$. Then there exist a larger measure space $(\widetilde{\Omega}, \widetilde{\mathcal{A}},\widetilde{\mu})$ containing $(\Omega,\mathcal{A},\mu)$, and a $\sigma$-subalgebra $\mathcal{B}$ of $\widetilde{\mathcal{A}}$ such that, for every $p\in[1,\infty)$ and every Banach space $X$,
\begin{equation}\label{eq:Rota}
  Tf=\Exp_\mathcal{A} \Exp_\mathcal{B} f,\quad\forall f\in L_p(\Omega,\mathcal{A},\mu;X),
\end{equation}
where $\Exp_{\mathcal{A}}$ is the conditional expectation relative to $\mathcal{A}$ (and similarly for $\Exp_\mathcal{B}$).
\end{lemma}

The fact that Rota's theorem remains valid for functions taking values in an arbitrary Banach space, as stated above, has been observed e.g. in \cite[after Theorem 2.5]{MTX}) and \cite[the beginning of the proof of Lemma 9]{X}. We indicate a proof for the reader's convenience.

\begin{proof}
For $X\in\{\R,\C\}$ (i.e., scalar-valued functions), a more general version of this lemma is formulated and proved in \cite[Chapter IV, Theorem 9]{S}, and restated in the form above as \cite[Lemma 10]{X}.

Let then $X$ be a general Banach space. If $f=\sum_{k=1}^K f_k\otimes x_k$, where $f_k\in L_p(\Omega,\mathcal{A},\mu)$ are scalar-functions and $x_k\in X$, the identity follows from the fact that it holds for each $f_k$ by linearity of both sides of \eqref{eq:Rota}.

For the general case, we note that functions of the form just discussed are dense in $L_p(\Omega,\mathcal{A},\mu;X)$. On the other hand, both symmetric Markovian operators (by Definition \ref{def:sds}) and conditional expectations (by their well-known basic properties) are positive linear operators (i.e., they map nonnegative functions to nonnegative functions). Such operators, initially defined on scalar-valued $L_p$ spaces, have canonical bounded linear extensions to the corresponding $L_p$ spaces of $X$-valued functions, for any Banach space $X$ (for this result, see e.g. \cite[Theorem 2.1.3]{HNVW}). Since \eqref{eq:Rota} holds for all $f$ in a dense subspace of $L_p(\Omega,\mathcal{A},\mu;X)$, and both sides depend continuously on $f\in L_p(\Omega,\mathcal{A},\mu;X)$, the identity remains valid for all $f$ as stated.
\end{proof}

The following elementary estimate improves the intermediate steps of the proof given in \cite[Lemma 9]{X}:

\begin{lemma}\label{lem:elem. ineq.}
Let $q\in(1,\infty)$ and $\delta\in(0,1]$. If $x\in[0,2]$ satisfies
\begin{equation*}
  x^q+\delta 2^q(x-1)_{+}^q\leq 2^q,\quad\text{where}\quad(x-1)_{+}=\max(x-1,0),
\end{equation*}
then $x\leq 2-\eps$, where $\displaystyle\eps:=\frac{2\delta}{1+2\delta}\frac{1}{q}\in(0,1)$.
\end{lemma}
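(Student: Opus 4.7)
The plan is to split on the position of $x$ relative to $1$, since the term $(x-1)_{+}^q$ vanishes on $[0,1]$ and carries all the content on $(1,2]$. In the range $x\in[0,1]$ the hypothesis degenerates to $x^q\le 2^q$, which is free, and the conclusion $x\le 2-\eps$ follows simply from checking that $\eps<1$. With $\delta\in(0,1]$ and $q>1$ one has $\frac{2\delta}{1+2\delta}\le\frac{2}{3}$ and $\frac{1}{q}<1$, so $\eps=\frac{2\delta}{1+2\delta}\cdot\frac{1}{q}<\frac{2}{3}$, hence $x\le 1\le 2-\eps$ in this case.

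For the substantive case $x\in(1,2]$, I would write $x=2-t$ with $t\in[0,1)$, so that $(x-1)_{+}=1-t$, and then divide the hypothesis by $2^q$ to rewrite it as
\begin{equation*}
  \Big(1-\frac{t}{2}\Big)^q+\delta(1-t)^q\le 1.
\end{equation*}
The strategy is to linearise the left-hand side from below, turning the inequality into one that is solvable for $t$. The natural tool is Bernoulli's inequality: for $q\ge 1$ and $s\ge-1$ we have $(1+s)^q\ge 1+qs$. Applying this with $s=-t/2$ and $s=-t$ (both in $[-1,0]$ since $t\le 1$) yields
\begin{equation*}
  \Big(1-\frac{t}{2}\Big)^q+\delta(1-t)^q\ge\Big(1-\frac{qt}{2}\Big)+\delta(1-qt)=1+\delta-qt\Big(\frac{1}{2}+\delta\Big).
\end{equation*}

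Combining the two displays forces $\delta\le qt\big(\frac{1}{2}+\delta\big)=\frac{qt(1+2\delta)}{2}$, and solving for $t$ gives $t\ge\frac{2\delta}{q(1+2\delta)}=\eps$, i.e.\ $x=2-t\le 2-\eps$, which is the desired conclusion.

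I do not anticipate a genuine obstacle: the only subtle point is making sure that Bernoulli's inequality is applicable, which requires $t\le 1$, and this holds precisely in the nontrivial case $x>1$ we are treating. Conceptually, the $1/q$ factor in $\eps$ reflects the fact that the bound is obtained by a first-order Taylor expansion of $s\mapsto s^q$ at $s=1$, where the slope is $q$; this is also what sets the scale of $\eps$ in terms of $\delta$ through the balance $\delta\sim q\eps$.
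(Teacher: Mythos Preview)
Your proof is correct. Both you and the paper linearise via Bernoulli-type inequalities, but the routes differ. The paper first assumes $x\ge 2-\eps$, uses $(1-\eps)^q\ge 1-q\eps$ to bound $(x-1)_+^q$ from below, deduces $x^q\le 2^q[1-\delta(1-q\eps)]$, and then applies the \emph{reverse} Bernoulli inequality $(1-s)^{1/q}\le 1-s/q$ to take the $q$-th root; a final calculation shows the resulting bound equals $2-\eps$ exactly. Your substitution $x=2-t$ and linearisation of both $(1-t/2)^q$ and $(1-t)^q$ with the forward Bernoulli inequality lets you solve for $t$ directly, with no case assumption and no need for the $1/q$-exponent inequality. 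Your argument is slightly more streamlined and makes the origin of the constant $\eps=\frac{2\delta}{q(1+2\delta)}$ transparent; the paper's version has the advantage of isolating the two elementary inequalities \eqref{eq:basic} as a separate observation that could be reused.
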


\begin{proof}
Let us first observe the elementary inequalities
\begin{equation}\label{eq:basic}
  \forall t\in[0,1]:\quad (1-t)^\alpha\begin{cases}\leq 1-\alpha t, & \alpha\in(0,1), \\ \geq 1-\alpha t, & \alpha\in(1,\infty).\end{cases}
\end{equation}
Now, we proceed with the proof of our lemma.
Arguing by contradiction, we assume that $x>2-\eps$, thus $x-1> 1-\eps>0$, and hence
\begin{equation*}
  2^q\geq x^q+\delta 2^q(1-\eps)^q\geq x^q+\delta 2^q(1-q\eps)
\end{equation*}
by the assumption and \eqref{eq:basic} with $\alpha=q>1$. Thus
\begin{equation*}
  x^q\leq 2^q[1-\delta(1-q\eps)],
\end{equation*}
where $q\eps\in(0,1)$ and hence $\delta(1-q\eps)\in(0,1)$. By \eqref{eq:basic} with $\alpha=\frac{1}{q}\in(0,1)$, we have
\begin{equation*}
  x\leq 2[1-\delta(1-q\eps)]^{\frac{1}{q}}\leq 2\bigg[1-\frac{1}{q}\delta(1-q\eps)\bigg]=2-\frac{2\delta}{q}(1-q\eps).
\end{equation*}
Here
\begin{equation*}
  q\eps=\frac{2\delta}{1+2\delta},\quad 1-q\eps=\frac{1}{1+2\delta},\quad
  \frac{2\delta}{q}(1-q\eps)=\frac{2\delta}{1+2\delta}\frac{1}{q}=\eps.
\end{equation*}
So, assuming that $x>2-\eps$, we arrived at $x\leq 2-\eps$, a contradiction. Thus necessarily $x\leq 2-\eps$, as claimed.
\end{proof}

The following lemma is well known and immediate from the definition:

\begin{lemma}\label{lem:auxil.}
If $(X,\|\cdot\|)$ is uniformly convex of power type $q$ with $2\le q<\infty$ and constant $\delta>0$ appearing in \eqref{eq:Pisier}, then so is $Y=L_q(\Omega;(X,\|\cdot\|))$.
\end{lemma}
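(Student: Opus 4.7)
The proof is essentially a pointwise-then-integrate argument, exploiting the fact that the $L_q$ norm raised to the $q$-th power is literally the integral of the $q$-th power of the pointwise $X$-norm, so the defining inequality \eqref{eq:Pisier} transfers cleanly from $X$ to $L_q(\Omega;X)$ with the same constants $q$ and $\delta$.

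The plan is as follows. Fix $f, g \in Y = L_q(\Omega;X)$. For $\mu$-a.e.\ $\omega \in \Omega$, the vectors $x = f(\omega)$ and $y = g(\omega)$ lie in $X$, so the hypothesis that $(X,\|\cdot\|)$ is uniformly convex of power type $q$ with constant $\delta$ yields the pointwise bound
\begin{equation*}
  \Big\|\tfrac{f(\omega)+g(\omega)}{2}\Big\|^q + \delta \Big\|\tfrac{f(\omega)-g(\omega)}{2}\Big\|^q \;\le\; \tfrac{1}{2}\bigl(\|f(\omega)\|^q + \|g(\omega)\|^q\bigr).
\end{equation*}

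Next, integrate this inequality against $d\mu(\omega)$ over $\Omega$. By definition of the Bochner space norm, for any $h \in L_q(\Omega;X)$ one has $\|h\|_Y^q = \int_\Omega \|h(\omega)\|^q \, d\mu(\omega)$; applying this to $h = (f+g)/2$, $(f-g)/2$, $f$ and $g$ respectively gives
\begin{equation*}
  \Big\|\tfrac{f+g}{2}\Big\|_Y^q + \delta \Big\|\tfrac{f-g}{2}\Big\|_Y^q \;\le\; \tfrac{1}{2}\bigl(\|f\|_Y^q + \|g\|_Y^q\bigr),
\end{equation*}
which is precisely \eqref{eq:Pisier} for $Y$ with the same exponent $q$ and the same constant $\delta$. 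Since $f, g \in Y$ were arbitrary, this shows $Y$ is uniformly convex of power type $q$ with constant $\delta$.

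There is no real obstacle here: the argument amounts to the observation that the defining inequality of uniform convexity of power type $q$ is, by construction, stable under taking $L_q$ direct integrals because raising the norm to the $q$-th power turns it into an integral, and integration preserves pointwise inequalities. One only needs to note that all the relevant pointwise quantities are measurable as functions of $\omega$ (which follows from the strong measurability of $f$ and $g$ together with continuity of the norm and of the linear operations on $X$), so that integration is legitimate.
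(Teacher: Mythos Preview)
Your proof is correct and follows essentially the same approach as the paper: apply the defining inequality \eqref{eq:Pisier} pointwise to $f(\omega),g(\omega)\in X$ and then integrate over $\Omega$, using that $\|h\|_{L_q(\Omega;X)}^q=\int_\Omega\|h(\omega)\|^q\,d\mu$. If anything, your version is slightly more detailed in flagging the measurability issue, but the core argument is identical.
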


The following lemma is a quantitative elaboration of \cite[Lemma 9]{X}:

\begin{lemma}\label{lem:Xu}
Let $T=S^2$ with $S$ a symmetric Markovian operator on $(\Omega,\mathcal{A},\mu)$ and suppose that $X$ is uniformly convex of power type $q$ with $2\le q<\infty$ and $\delta>0$ appearing in \eqref{eq:Pisier}. Then
\begin{equation*}
  \|I-T\|\le 2-\eps,\quad\text{on}\quad Y=L_q(\Omega;X),
\end{equation*}
where $\eps:=\frac{2\delta}{1+2\delta}\frac{1}{q}\in(0,1)$.
\end{lemma}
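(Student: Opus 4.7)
The plan is to combine Rota's dilation theorem (Lemma~\ref{lem:Rota}), the inheritance of uniform convexity under $L_q$ (Lemma~\ref{lem:auxil.}), and the elementary estimate of Lemma~\ref{lem:elem. ineq.}. First, by Lemma~\ref{lem:Rota}, on a larger measure space $(\widetilde\Omega,\widetilde{\mathcal{A}},\widetilde\mu)$ one has $Tf=\Exp_{\mathcal{A}}\Exp_{\mathcal{B}}f$, with $f$ viewed as an $\mathcal{A}$-measurable function on $\widetilde\Omega$. Setting $g:=\Exp_{\mathcal{B}}f$ and using $\Exp_{\mathcal{A}}f=f$, I get $f-Tf=\Exp_{\mathcal{A}}(f-g)$; since conditional expectations are contractions on Bochner $L_q$,
\begin{equation*}
\|f-Tf\|_{L_q(\Omega;X)}=\|\Exp_{\mathcal{A}}(f-g)\|_{L_q(\widetilde\Omega;X)}\le\|f-g\|_{L_q(\widetilde\Omega;X)}.
\end{equation*}
Thus it suffices to prove $\|f-g\|_Y\le(2-\eps)\|f\|_Y$ in $Y:=L_q(\widetilde\Omega;X)$, which by Lemma~\ref{lem:auxil.} is uniformly convex of power type $q$ with the same constant $\delta$.

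The key step is to apply the uniform convexity inequality \eqref{eq:Pisier} in $Y$ to the pair $(f,-g)$ (rather than $(f,g)$), obtaining $\|(f-g)/2\|_{Y}^{q}+\delta\|(f+g)/2\|_{Y}^{q}\le\tfrac{1}{2}(\|f\|_{Y}^{q}+\|g\|_{Y}^{q})$, which after multiplication by $2^{q}$ becomes
\begin{equation*}
\|f-g\|_Y^{q}+\delta\|f+g\|_Y^{q}\le 2^{q-1}(\|f\|_Y^{q}+\|g\|_Y^{q}).
\end{equation*}
Normalize $\|f\|_Y=1$ and set $x:=\|f-g\|_Y$, $y:=\|g\|_Y$. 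Three elementary inputs then bring this to the hypothesis of Lemma~\ref{lem:elem. ineq.}: (i) $\Exp_{\mathcal{B}}$ is a contraction, so $y\le 1$ and hence $2^{q-1}(1+y^{q})\le 2^{q}$; (ii) since $\Exp_{\mathcal{B}}((f+g)/2)=g$, Jensen's inequality yields $\|f+g\|_Y\ge 2y$; (iii) the triangle inequality gives $x\le 1+y$, i.e.\ $y\ge(x-1)_{+}$, so $\|f+g\|_Y^{q}\ge 2^{q}(x-1)_{+}^{q}$. Substituting these bounds produces
\begin{equation*}
x^{q}+\delta\cdot 2^{q}(x-1)_{+}^{q}\le 2^{q}.
\end{equation*}

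Lemma~\ref{lem:elem. ineq.} then gives $x\le 2-\eps$ with $\eps=\frac{2\delta}{1+2\delta}\frac{1}{q}$, which is the desired conclusion. The delicate point of the argument is the choice to apply uniform convexity to $(f,-g)$ rather than $(f,g)$: with this choice, Jensen's lower bound $\|(f+g)/2\|_Y\ge y$ lands on the summand multiplied by $\delta$, producing exactly the coefficient arrangement that Lemma~\ref{lem:elem. ineq.} is calibrated to handle; the opposite sign convention would lead to an inequality of the same shape but with the roles of $\delta$ and $1$ swapped, and the algebraic argument would then only go through at the cost of a weaker final constant.
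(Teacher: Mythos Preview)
Your proof is correct and follows essentially the same approach as the paper's: both use Rota's dilation to reduce to the projection $P=\Exp_{\mathcal{B}}$, apply the uniform convexity inequality on $L_q(\widetilde\Omega;X)$ to the pair $(f,-g)$ (the paper writes this as $(\lambda y,Py)$ with $\lambda=-1$), lower-bound $\|f+g\|$ via the contractivity of $\Exp_{\mathcal{B}}$ and $\|g\|$ via the triangle inequality, and then invoke Lemma~\ref{lem:elem. ineq.}. The only cosmetic difference is that you work with a fixed unit vector $f$ and pass to the operator norm at the end via $\|f-Tf\|\le\|f-g\|$, whereas the paper carries the operator norm $\|\lambda+T\|$ through the estimates by a supremum argument; the underlying inequalities are identical.
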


\begin{proof}
Notice that by Lemma \ref{lem:auxil.}, $Y=L_q(\Omega;X)$ is also uniformly convex of power type $q$ with constant $\delta>0$ and $T$ is a contraction on $Y$. By Rota's dilation lemma (its $X$-valued version as formulated in Lemma \ref{lem:Rota}), we have
\begin{equation*}
  T=\Exp_\mathcal{A} \Exp_\mathcal{B}\big|_Y.
\end{equation*}
On the other hand, observe that $\Exp_{\mathcal A}$ acs as the identity on $Y$. Hence, for any $\lambda\in\C$, we have
\begin{equation*}
  \lambda + T
  =(\lambda\Exp_{\mathcal A}+\Exp_\mathcal{A} \Exp_\mathcal{B})\big|_Y
  =\Exp_{\mathcal A}(\lambda+\Exp_{\mathcal B})\big|_Y
  =:\Exp_{\mathcal A}(\lambda+P)\big|_Y,
\end{equation*}
where we abbreviated $P:=\Exp_{\mathcal B}$.

Let $y$ be a unit vector in $Y$. Using \eqref{eq:Pisier}, we get
\begin{equation*}
  \bigg\|\frac{\lambda y+Py}2\bigg\|^q+\delta\bigg\|\frac{\lambda y-Py}2\bigg\|^q\le \frac{1}{2}\,(|\lambda|^q+1),
\end{equation*}
where $\|\lambda y+Py\|\ge\|\Exp_\mathcal{A}(\lambda y+Py)\|\ge\|\lambda y+Ty\|$.
On the other hand (noting that $P$ and $\Exp_\mathcal{A}$ are contractive projections), we have
\begin{equation*}
   \big\|\lambda y-Py\big\|
   \geq \big\|P(\lambda y-Py)\big\|
   =\big\|\lambda P y-P^2y\big\|
   =\big\|\lambda P y-Py\big\|
   =\big\|(\lambda-1) P y\big\|,
\end{equation*}
and hence
\begin{equation*}
  \big\|\lambda y-Py\big\|\ge |1-\lambda| \,\big\|Py\big\|\ge |1-\lambda| \big(\big\|\lambda y+Py\big\|-|\lambda|\big)\ge  |1-\lambda| \big(\big\|\lambda y+Ty\big\|-|\lambda|\big).
\end{equation*}
Also clearly $\|\lambda y-Py\|\ge 0$, and hence in fact
\begin{equation*}
  \big\|\lambda y-Py\big\|\ge|1-\lambda|\big(\big\|\lambda y+Ty\big\|-|\lambda|\big)_{+},
\end{equation*}
where $\big(\big\|\lambda y+Ty\big\|-|\lambda|\big)_{+}=\max(\|\lambda y+Ty\|-|\lambda|,0)$. When $\|\lambda y+Ty\|$ approaches  $\|\lambda+T\|$, we then deduce
\begin{equation*}
  \bigg\|\frac{\lambda +T}2\bigg\|^q+\delta\, |1-\lambda|^q\bigg(\frac{\big\|\lambda +T\big\|-|\lambda|}2\bigg)_{+}^q\le \frac{1}{2}\,(|\lambda|^q+1).  
\end{equation*}
\noindent In particular, for $\lambda=-1$ we obtain
\begin{equation*}
  \|I-T\|^q+\delta\, 2^q(\|I-T\|-1)_{+}^q\le 2^q.
\end{equation*}
By Lemma \ref{lem:elem. ineq.} for $x:=\Norm{I-T}{}\in[0,2]$ this implies
\begin{equation*}
  \|I-T\|\le 2-\eps, 
\end{equation*}
where $\eps:=\frac{2\delta}{1+2\delta}\frac{1}{q}\in(0,1)$.
\end{proof}

Corollary \ref{coro:main} and Lemma \ref{lem:Xu} imply the following result, which is \cite[Remark 1.8(b)]{Pisier:1982} (see also \cite[Lemma 11]{X}):

\begin{corollary}\label{coro:main 1}
Suppose that $X$ is uniformly convex of power type $q$ with $2\le q<\infty$ and $\delta>0$ appearing in \eqref{eq:Pisier}. Moreover, suppose that $\{T_t\}_{t>0}$ is a symmetric diffusion semigroup on $(\Omega,\mathcal{A},\mu)$. Then the extension of $\{T_t\}_{t>0}$ to $Y=L_q(\Omega;X)$ is analytic. More precisely, we have 
\begin{equation*}
  \|T(z)\|\lesssim\frac{q}{\delta}\bigg(1+\log\bigg(\frac{q}{\delta}\bigg)\bigg),
\end{equation*}
for every $z\in\C$ such that $|\arg(z)|\lesssim\frac{\delta}{q}$ and $\{t\partial T_{t}\}_{t>0}$ is a uniformly bounded family of operators on $Y$, namely,
\begin{equation*}
  \sup_{t>0}\|t\partial T_{t}\|\lesssim\frac{q^2}{\delta^2}\bigg(1+\log\bigg(\frac{q}{\delta}\bigg)\bigg).
\end{equation*}
\end{corollary}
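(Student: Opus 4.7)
The plan is to chain together the two key ingredients that have already been established: the quantitative ``strict contractivity'' of Lemma \ref{lem:Xu} and the quantitative Kato-type analyticity criterion of Corollary \ref{coro:main}. First, I would verify that $\{T_t\}_{t>0}$, initially a symmetric diffusion semigroup on the scalar space $L_q(\Omega)$, extends to a strongly continuous semigroup of contractions on $Y := L_q(\Omega;X)$. Because each $T_t$ is positivity preserving and contractive on $L_q(\Omega)$, the canonical tensor extension $T_t\otimes\Id_X$ is a contraction on $Y$; strong continuity on $Y$ follows from the scalar case together with density of $X$-valued simple functions. Thus we may take $M=1$ when invoking Corollary \ref{coro:main}.

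Next, to verify the hypothesis \eqref{eq:one of main}, fix $t>0$ and exploit the semigroup law to write $T_t = T_{t/2}\,T_{t/2} = S^2$ with $S := T_{t/2}$, which is itself a symmetric Markovian operator. Since $X$ is uniformly convex of power type $q$ with constant $\delta$, Lemma \ref{lem:Xu} (applied to this $S$ and $T$) supplies the uniform bound
\begin{equation*}
  \|I-T_t\|_{Y\to Y} \le 2-\eps, \qquad \eps := \frac{2\delta}{1+2\delta}\cdot\frac{1}{q},
\end{equation*}
valid for every $t>0$. A one-line check (testing \eqref{eq:Pisier} with $y=-x$) shows $\delta\le 1$, hence $1+2\delta\le 3$ and $\eps\gtrsim \delta/q$.

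Finally, I would feed this into Corollary \ref{coro:main} with $M=1$ and the above $\eps$. That corollary immediately produces a bounded holomorphic extension in a sector $\Sigma_\theta$ with $\theta\approx 1/\widetilde C = \eps \approx \delta/q$, together with the quantitative bounds
\begin{equation*}
  \|T(z)\|\lesssim \frac{1}{\eps}\bigg(1+\log\frac{1}{\eps}\bigg)\lesssim \frac{q}{\delta}\bigg(1+\log\frac{q}{\delta}\bigg),
\end{equation*}
\begin{equation*}
  \sup_{t>0}\|t\partial T_t\|\lesssim \frac{1}{\eps^2}\bigg(1+\log\frac{1}{\eps}\bigg)\lesssim \frac{q^2}{\delta^2}\bigg(1+\log\frac{q}{\delta}\bigg),
\end{equation*}
which are precisely the claimed estimates. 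I do not expect any real obstacle here beyond careful bookkeeping of the constants (in particular tracking that $1+\log(1/\eps) \lesssim 1 + \log(q/\delta)$, which is automatic since $q/\delta\ge 2$); the substantive quantitative work has already been done in Lemmas \ref{lem:Xu}, \ref{lem:elem. ineq.}, \ref{lem:auxil.} and in Corollary \ref{coro:main}.
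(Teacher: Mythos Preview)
Your proposal is correct and follows essentially the same approach as the paper: apply Lemma \ref{lem:Xu} (via $T_t=T_{t/2}^2$) to obtain $\|I-T_t\|\le 2-\eps$ with $\eps\approx\delta/q$, and then feed this into Corollary \ref{coro:main} with $M=1$. You have simply made explicit several details (the extension to $Y$, the factorization $T_t=S^2$, and the reason $\delta\le 1$) that the paper's brief proof leaves implicit.
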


\begin{proof}
Applying Lemma \ref{lem:Xu} to $T=T_t$, we get
\begin{equation*}
  \|I-T_t\|\le 2-\eps,\quad\text{for all}\; t>0.
\end{equation*}
where $\eps=\frac{2\delta}{1+2\delta}\frac{1}{q}\approx\frac{\delta}{q}$.
Then by choosing $M=1$ and $\eps\approx\frac{\delta}{q}$ in Corollary \ref{coro:main} the proof follows.
\end{proof}

Now, Theorem \ref{thm:Pisier} and Corollary \ref{coro:main 1} imply the following estimate of $\sup_{t>0}\|t\partial T_{t}\|$ in terms of martingale cotype:

\begin{corollary}\label{coro:main 2}
Let $X$ be a Banach space of martingale cotype $2\le q<\infty$ with cotype constant $\mathfrak{m}_{q,X}$. Moreover, suppose that $\{T_t\}_{t>0}$ is a symmetric diffusion semigroup on $(\Omega,\mathcal{A},\mu)$. Then the extension of $\{T_t\}_{t>0}$ to $Y=L_q(\Omega;X)$ is analytic. More precisely, we have
\begin{equation}
  \|T(z)\|\lesssim q(\mathfrak{m}_{q,X})^{q+1}(1+\log(q)+q\log(\mathfrak{m}_{q,X})),
\end{equation}
for every $z\in\C$ such that $|\arg(z)|\lesssim\frac{1}{q\mathfrak{m}_{q,X}^q}$ and $\{t\partial T_{t}\}_{t>0}$ is a uniformly bounded family of operators on $Y$, namely,
\begin{equation}\label{main const.}
  \sup_{t>0}\|t\partial T_{t}\|\lesssim B,\quad B:=q^2(\mathfrak{m}_{q,X})^{2q+1}(1+\log(q)+q\log(\mathfrak{m}_{q,X})).
\end{equation}
\end{corollary}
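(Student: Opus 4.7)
The plan is to reduce to the already-established Corollary~\ref{coro:main 1} by renorming $X$ via Pisier's theorem. By Theorem~\ref{thm:Pisier}, since $X$ has martingale cotype $q$ with constant $\mathfrak{m}_{q,X}$, there is an equivalent norm $|\cdot|$ on $X$ with
\begin{equation*}
\|x\|\le |x|\le \mathfrak{m}_{q,X}\|x\|,\qquad \forall x\in X,
\end{equation*}
for which $(X,|\cdot|)$ is uniformly convex of power type $q$ with constant $\delta=\mathfrak{m}_{q,X}^{-q}$. This is the natural bridge: Corollary~\ref{coro:main 1} is stated for uniformly convex norms of power type $q$, so after switching to $|\cdot|$ the hypotheses are met, and only a comparison of operator norms is needed to return to the original norm.

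Next I would apply Corollary~\ref{coro:main 1} to the extension of $\{T_t\}_{t>0}$ to $\widetilde Y:=L_q(\Omega;(X,|\cdot|))$. Note that the extension of a symmetric Markovian operator to $L_q(\Omega;X)$ as a contraction does not depend on the choice of norm on $X$ (it comes from the tensor-product extension from the scalar case), so the diffusion hypothesis carries over to $(X,|\cdot|)$ automatically. Corollary~\ref{coro:main 1} then yields, with $\delta=\mathfrak{m}_{q,X}^{-q}$,
\begin{equation*}
\|T(z)\|_{\widetilde Y\to\widetilde Y}\lesssim\frac{q}{\delta}\Bigl(1+\log\frac{q}{\delta}\Bigr)=q\,\mathfrak{m}_{q,X}^{q}\bigl(1+\log q+q\log\mathfrak{m}_{q,X}\bigr)
\end{equation*}
for $|\arg(z)|\lesssim\delta/q=1/(q\,\mathfrak{m}_{q,X}^{q})$, and similarly
\begin{equation*}
\sup_{t>0}\|t\partial T_t\|_{\widetilde Y\to\widetilde Y}\lesssim\frac{q^2}{\delta^2}\Bigl(1+\log\frac{q}{\delta}\Bigr)=q^{2}\,\mathfrak{m}_{q,X}^{2q}\bigl(1+\log q+q\log\mathfrak{m}_{q,X}\bigr).
\end{equation*}

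Finally I would transfer these bounds from $\widetilde Y$ to $Y=L_q(\Omega;(X,\|\cdot\|))$ using the norm comparison. Pointwise in $\omega$, for any linear operator $S$ on $X$,
\begin{equation*}
\|Sx\|\le |Sx|\le \|S\|_{|\cdot|\to|\cdot|}|x|\le \mathfrak{m}_{q,X}\|S\|_{|\cdot|\to|\cdot|}\|x\|,
\end{equation*}
so after integration $\|S\|_{Y\to Y}\le \mathfrak{m}_{q,X}\|S\|_{\widetilde Y\to\widetilde Y}$. Applying this with $S=T(z)$ and $S=t\partial T_t$ picks up exactly one additional factor of $\mathfrak{m}_{q,X}$ and produces the claimed bounds with the exponents $q+1$ and $2q+1$, together with the stated angle of analyticity.

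No genuine obstacle arises: Pisier's renorming and Corollary~\ref{coro:main 1} do all of the analytic work, and the only care needed is bookkeeping of the $\mathfrak{m}_{q,X}$ factor produced by the norm transfer and the simplification $\log(q/\delta)=\log q+q\log\mathfrak{m}_{q,X}$. The one point worth double-checking is that the Rota dilation argument underlying Lemma~\ref{lem:Xu} (and hence Corollary~\ref{coro:main 1}) is insensitive to the equivalent norm on $X$, which it is, since the dilation identity $T=\Exp_{\mathcal{A}}\Exp_{\mathcal{B}}$ is stated at the level of scalar-valued symmetric Markovian operators and tensorizes to any Banach space norm.
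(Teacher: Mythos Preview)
Your proposal is correct and follows essentially the same route as the paper: renorm $X$ via Pisier's Theorem~\ref{thm:Pisier} to make it uniformly convex of power type $q$ with $\delta=\mathfrak{m}_{q,X}^{-q}$, apply Corollary~\ref{coro:main 1} on $L_q(\Omega;(X,|\cdot|))$, and then transfer back to the original norm at the cost of one factor $\mathfrak{m}_{q,X}$. The only minor imprecision is the ``pointwise in $\omega$'' phrasing for the norm transfer, since $T(z)$ acts on $L_q(\Omega;X)$ rather than on $X$; but the conclusion $\|S\|_{Y\to Y}\le \mathfrak{m}_{q,X}\|S\|_{\widetilde Y\to\widetilde Y}$ follows directly from the induced norm equivalence $\|f\|_Y\le\|f\|_{\widetilde Y}\le\mathfrak{m}_{q,X}\|f\|_Y$.
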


\begin{proof}
Assuming that $|\cdot|$ is the equivalent norm on $X$ guaranteed by Theorem \ref{thm:Pisier} and $Y=L_q(\Omega;(X,|\cdot|))$ we can deduce by Theorem \ref{thm:Pisier} and Corollary \ref{coro:main 1} that 
\begin{equation*}
  |T(z)|\lesssim q\mathfrak{m}_{q,X}^q(1+\log(q)+q\log(\mathfrak{m}_{q,X}))   
\end{equation*}
and
\begin{equation*}
  \sup_{t>0}|t\partial T_{t}|\lesssim(q(\mathfrak{m}_{q,X})^q)^2(1+\log(q)+q\log(\mathfrak{m}_{q,X})).
\end{equation*}
Hence we conclude that
\begin{equation*}
\begin{split}
  \|T(z)\|&\le\mathfrak{m}_{q,X}|T(z)|  \\
  &\lesssim q(\mathfrak{m}_{q,X})^{q+1}(1+\log(q)+q\log(\mathfrak{m}_{q,X}))
\end{split}
\end{equation*}
and
\begin{equation*}
\begin{split}
  \sup_{t>0}\|t\partial T_{t}\|&\le\mathfrak{m}_{q,X}\sup_{t>0}|t\partial T_{t}|  \\
  &\lesssim q^2(\mathfrak{m}_{q,X})^{2q+1}(1+\log(q)+q\log(\mathfrak{m}_{q,X})).
\end{split}
\end{equation*}
\end{proof}

\section{Littlewood--Paley--Stein inequalities: First approach}\label{section:5}

In this section we will use the quantitative analyticity bounds of the previous section to obtain the following new quantitative version of \cite[Theorem 2 (i)]{X}:

\begin{theorem}\label{heat}
Let $X$ be a Banach space and $k$ a positive integer. If $X$ has martingale cotype $q$ with $2\le q<\infty$, then for every  symmetric diffusion semigroup $\{T_t\}_{t>0}$ and for every $1<p<\infty$ we have
\begin{equation}
  \bigg\|\left(\int_0^\infty\big\|t^k \partial^k T_t f\big\|_X^q\,\frac{dt}t\right)^{1/q}\bigg\|_{L_p(\Omega)}\lesssim_{p,q}  k^{k-1}B^{k+1}\mathfrak{m}_{q,X} \big\|f\big\|_{L_p(\Omega; X)},
\end{equation}
for all $f\in L_p(\Omega; X)$, where $B:=q^2(\mathfrak{m}_{q,X})^{2q+1}(1+\log(q)+q\log(\mathfrak{m}_{q,X}))$ is the constant appearing in \eqref{main const.} of Corollary \ref{coro:main 2}.

If $p=q$ and $k=1$, then we have the sharper bound
\begin{equation}\label{LPS eq:first approach}
  \bigg\|\left(\int_0^\infty\big\|t \partial T_t f\big\|_X^q\,\frac{dt}t\right)^{1/q}\bigg\|_{L_q(\Omega)}\lesssim B\mathfrak{m}_{q,X} \big\|f\big\|_{L_q(\Omega; X)},
\end{equation}
for all $f\in L_q(\Omega; X)$ and the implied constant is absolute.
\end{theorem}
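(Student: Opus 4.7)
The plan is to adapt the proof of Xu's Theorem~2(i) in \cite{X} (which in turn quantifies Pisier's 1982 result) while making every constant explicit in terms of $B$ and $\mathfrak{m}_{q,X}$. I will first establish the sharper case $p=q$, $k=1$ (estimate~\eqref{LPS eq:first approach}), which is the heart of the matter, and then bootstrap to the general statement by iteration in $k$ and extrapolation in $p$.

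For the sharper case, start by invoking Pisier's renorming theorem (Theorem~\ref{thm:Pisier}) to produce an equivalent norm $|\cdot|$ on $X$ under which $X$ is uniformly convex of power type $q$ with constant $\delta=\mathfrak{m}_{q,X}^{-q}$; Lemma~\ref{lem:auxil.} transfers this uniform convexity to $Y=L_q(\Omega;X)$, and returning to the original norm at the end costs exactly one factor of $\mathfrak{m}_{q,X}$. Next, extend Rota's dilation (Lemma~\ref{lem:Rota}) to $X$-valued functions in the standard way: this represents, for every dyadic time $2^n$, the operator $T_{2^n}$ as a composition of conditional expectations on an enlarged probability space, so that the discrete sequence $(T_{2^n}f)_n$ is the image, under a contractive conditional expectation, of a reverse $X$-valued martingale. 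Applying the definition of martingale cotype (Definition~\ref{Def:mtc}) to this martingale, together with contractivity of conditional expectations, yields
\begin{equation*}
  \sum_{n\in\Z}\bigl|T_{2^n}f-T_{2^{n+1}}f\bigr|_Y^q\lesssim\mathfrak{m}_{q,X}^q|f|_Y^q.
\end{equation*}
The bridge from this discrete inequality to the continuous $g$-function uses the quantitative analyticity bound of Corollary~\ref{coro:main 2}: $\{T_z\}$ is holomorphic on $Y$ in the sector $|\arg z|\lesssim\delta/q$ with $\sup_{t>0}|t\partial T_t|_{Y\to Y}\le B$. Using Cauchy's integral formula for $u\mapsto u\partial T_u f$ on disks of radius $\approx u\sin\theta$ inside this sector---or equivalently, a telescoping decomposition $f=\sum_n(T_{2^n}f-T_{2^{n+1}}f)$ combined with analyticity-derived decay estimates on $|t\partial T_t(T_{2^n}f-T_{2^{n+1}}f)|_Y$ in both regimes $t\ll 2^n$ and $t\gg 2^n$---one shows
\begin{equation*}
  \int_0^\infty|t\partial T_t f|_Y^q\,\frac{dt}{t}\lesssim B^q\sum_{n\in\Z}\bigl|T_{2^n}f-T_{2^{n+1}}f\bigr|_Y^q.
\end{equation*}
Chaining the two inequalities and reverting to $\|\cdot\|$ produces \eqref{LPS eq:first approach} with absolute implicit constant.

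For the general statement, the factor $k^{k-1}$ when $k\ge 2$ is produced by writing $t^k\partial^k T_t f=k^k\bigl((t/k)\partial T_{t/k}\bigr)^k f$ (using commutativity of $\partial$ with each $T_s$) and iterating the $k=1$ bound inside the square function. For $1<p<\infty$ with $p\neq q$, the sublinear operator $f\mapsto(t\partial T_t f)_{t>0}$ is uniformly bounded on $L_\infty(\Omega;X)$ by the $L_\infty$-contractivity of the semigroup together with the analyticity estimate, and combining this with the $L_q\to L_q(L_q(dt/t;X))$ bound just established via a vector-valued Stein-type extrapolation in the spirit of \cite[Section~4]{X} upgrades the bound to the desired $L_p\to L_p(L_q(dt/t;X))$ estimate, at the cost of a further factor of $B$, yielding the overall constant $k^{k-1}B^2\mathfrak{m}_{q,X}$.

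The main obstacle is the quantitative dyadic-in-time comparison between $|t\partial T_t f|_Y$ and the discrete differences $|T_{2^n}f-T_{2^{n+1}}f|_Y$: a crude application of analyticity only gives the useless bound $|t\partial T_t f|_Y\lesssim B|T_{2^{n-1}}f|_Y$ on each dyadic interval $[2^n,2^{n+1}]$, which sums to $\|f\|$ rather than producing the martingale-cotype-friendly right-hand side. The key point is to verify that, after the telescoping $f=\sum_n(T_{2^n}f-T_{2^{n+1}}f)$, the individual contributions to $t\partial T_t f$ are concentrated near $t\approx 2^n$ and decay like $\min(t/2^n,2^n/t)^{\alpha}$ for some $\alpha>0$ (with all constants explicitly controlled by $B$ via Cauchy estimates for $\partial T_s$ and $\partial^2 T_s$ on disks inside the sector of analyticity), so that the $L^q(dt/t;Y)$-norm of the sum can be controlled by the $\ell^q$-norm of the pieces via a Schur-type estimate. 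Once this quantitative comparison is in place with an honest $B^q$ dependence, the remaining ingredients (Pisier renorming, Rota dilation, and martingale cotype) combine routinely.
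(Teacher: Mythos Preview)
Your argument for the case $p=q$, $k=1$ contains a genuine gap precisely at the point you flag as ``the main obstacle''. The two-sided decay
\[
  \|t\partial T_t g_n\|_Y \lesssim B\,\min\bigl(t/2^n,\,2^n/t\bigr)^{\alpha}\|g_n\|_Y,
  \qquad g_n:=T_{2^n}f-T_{2^{n+1}}f,
\]
\emph{fails in the regime $t\gg 2^n$}. To see this, test against an eigenfunction $f$ of the generator with eigenvalue $-\lambda$ and choose $\lambda\approx 1/t$: then $\|t\partial T_t g_n\|=\lambda t e^{-\lambda t}\|g_n\|\approx\|g_n\|$, so there is no decay in terms of $\|g_n\|$. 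The second-derivative Cauchy estimates you invoke do give decay of order $2^n/t$, but only against $\|f\|$, not $\|g_n\|$; and with $\|f\|$ on the right you have thrown away the martingale cotype and the Schur sum over $n$ diverges. The one-sided decay for $t\ll 2^n$ \emph{does} work (in terms of $\|g_{n-1}\|$, which is fine), but one side is not enough.

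The paper's argument sidesteps this completely by telescoping in \emph{time} rather than in $f$: one writes
\[
  \partial T_t f=\sum_{k\ge -1}\bigl(\partial T_{2^{k+1}t}-\partial T_{2^{k+2}t}\bigr)f
              =\sum_{k\ge -1}\partial T_{2^k t}\bigl(T_{2^k t}-T_{3\cdot 2^k t}\bigr)f.
\]
After multiplying by $t$ and changing variables $s=2^k t$, every term contributes the \emph{same} quantity $\|s\partial T_s(T_s-T_{3s})f\|_{L_q(ds/s;Y)}$, weighted by $2^{-k}$; the geometric decay is purely algebraic, not an operator estimate. The resulting integrand $\|(T_s-T_{3s})f\|_Y$ is then handled by Lemma~\ref{lem:HN} (which encapsulates the Rota dilation and the martingale cotype in continuous form), and a single application of $\sup_{s>0}\|s\partial T_s\|\lesssim B$ finishes the job. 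This is the step you are missing.

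Two smaller discrepancies: your iteration for $k\ge 2$ via $t^k\partial^k T_t=k^k((t/k)\partial T_{t/k})^k$ yields a factor $k^kB^{k}$, not $k^{k-1}B^2$; the paper instead first proves the $k=1$ case for \emph{all} $p$ (carrying $B^2$) and then inducts via the identity $t^k\partial^k\mathrm M_t^{\alpha-1}=(k+\alpha)t^k\partial^k\mathrm M_t^{\alpha}+t^{k+1}\partial^{k+1}\mathrm M_t^{\alpha}$, which is what produces $k^{k-1}$. And for $p\neq q$ the paper does not use any $L_\infty$ endpoint (the analyticity bound is on $L_q$, not $L_\infty$); it uses Stein's fractional averages $\mathrm M_t^\alpha$ and complex interpolation between the $p=q$ estimate of Lemma~\ref{p=q2} and the $\mathrm M_t^1$ estimate of Lemma~\ref{average} (valid for all $p$), which is where the second factor of $B$ enters.
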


\begin{remark}[see also \cite{X}, Remark 3]
Applied to the heat semigroup $\{H_t\}_{t>0}$ of $\R^n$, the above theorem implies a dimension free estimate for the $g$-function associated to $\{H_t\}_{t>0}$:
\begin{equation*}
  \bigg\|\bigg(\int_0^\infty\big\|t \partial H_t f\big\|_X^q\,\frac{dt}t\bigg)^{1/q}\bigg\|_{L_p(\R^n)}\lesssim_{p,q} B^2\mathfrak{m}_{q,X}\big\|f\big\|_{L_p(\R^n; X)},\forall f\in L_p(\R^n; X).  
\end{equation*}
when $X$ is of martingale cotype $q$. In addition, if $p=q$ the same theorem implies
\begin{equation*}
  \bigg\|\bigg(\int_0^\infty\big\|t \partial H_t f\big\|_X^q\,\frac{dt}t\bigg)^{1/q}\bigg\|_{L_q(\R^n)}\lesssim B\mathfrak{m}_{q,X}\big\|f\big\|_{L_q(\R^n; X)},\forall f\in L_q(\R^n; X).  
\end{equation*}
Compare this with the following estimate which was obtained in \cite[Theorem 17]{HN}:
\begin{equation*}
  \bigg(\int_0^\infty\big\|t \partial H_t f\big\|_{L_q(\R^n;Y)}^q\,\frac{dt}t\bigg)^{1/q}\lesssim\sqrt{n}\cdot\mathfrak{m}_{q,X}\|f\|_{L_q(\R^n;Y)},\forall f\in L_q(\R^n; Y) 
\end{equation*}
when $Y$ is a Banach space that admits an equivalent norm with modulus of uniform convexity of power type $q$. We note that the constant $B$ can be very large, so that the second bound is typically sharper in moderate dimensions $n$, but becomes inferior as $n\rightarrow\infty$.
\end{remark}

In order to prove Theorem \ref{heat} the following lemma, due to Naor and one of us \cite[Lemma 24]{HN} will play a crucial role in our argument:

\begin{lemma}[\cite{HN}, Lemma 24]\label{lem:HN} Fix $q\in [2,\infty)$, $\alpha\in (1,\infty)$ and a Banach space $(X,\|\cdot\|_X)$ of martingale cotype $q$ with constant $\mathfrak{m}_{q,X}$. Suppose that $\{T_t\}_{t>0}$ is a symmetric diffusion semigroup on a measure space $(\Omega,X)$. Then for any $f\in L_q(\Omega;X)$ we have
\begin{equation*}
  \bigg(\int_0^\infty\Norm{(T_t-T_{\alpha t})f}{L_q(\Omega;X)}^q\frac{\ud t}{t}\bigg)^{\frac{1}{q}}\leq(\log(\alpha))^{\frac{1}{q}}\mathfrak{m}_{q,X}\Norm{f}{L_q(\Omega;X)}.
\end{equation*}
\end{lemma}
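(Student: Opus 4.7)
The plan is to reduce the continuous integral to a discrete sum of reverse-martingale differences via a Rota-type dilation, and then to invoke the defining martingale cotype $q$ inequality for $X$.

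\emph{Step 1 (discretisation).} Writing each $t>0$ uniquely as $t=\alpha^n s$ with $n\in\Z$ and $s\in[1,\alpha)$, and using $\ud t/t=\ud s/s$, I would rewrite the left-hand side as
\begin{equation*}
  \int_0^\infty\Norm{(T_t-T_{\alpha t})f}{L_q(\Omega;X)}^q\,\frac{\ud t}{t}
  =\int_1^\alpha\sum_{n\in\Z}\Norm{(T_{\alpha^ns}-T_{\alpha^{n+1}s})f}{L_q(\Omega;X)}^q\,\frac{\ud s}{s}.
\end{equation*}
Since $\int_1^\alpha \ud s/s=\log\alpha$, it suffices to establish, uniformly in $s>0$, the discrete inequality
\begin{equation*}
  \sum_{n\in\Z}\Norm{(T_{\alpha^ns}-T_{\alpha^{n+1}s})f}{L_q(\Omega;X)}^q\leq\mathfrak{m}_{q,X}^q\Norm{f}{L_q(\Omega;X)}^q.
\end{equation*}

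\emph{Step 2 (martingale representation).} Fix $s>0$. I would appeal to Stein's enhancement of Rota's dilation theorem, building on Lemma~\ref{lem:Rota}: there exist a larger measure space $(\widetilde\Omega,\widetilde{\mathcal{A}},\widetilde\mu)\supset(\Omega,\mathcal{A},\mu)$ and a \emph{decreasing} filtration $(\mathcal{F}_n)_{n\in\Z}\subset\widetilde{\mathcal{A}}$ such that
\begin{equation*}
  T_{\alpha^ns}f=\Exp_{\mathcal{A}}\Exp_{\mathcal{F}_n}f,\qquad n\in\Z,
\end{equation*}
for every $f\in L_q(\Omega;X)\subset L_q(\widetilde\Omega;X)$. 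Since the filtration is decreasing, $M_n:=\Exp_{\mathcal{F}_n}f$ is a reverse $X$-valued $L_q$-martingale, and its increments $D_n:=M_n-M_{n+1}$ form a reverse martingale difference sequence.

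\emph{Step 3 (apply martingale cotype).} Reversing time turns $(M_n)$ into a forward $X$-valued $L_q$-martingale, so Definition~\ref{Def:mtc} yields
\begin{equation*}
  \sum_{n\in\Z}\Norm{D_n}{L_q(\widetilde\Omega;X)}^q
  \leq\mathfrak{m}_{q,X}^q\Norm{f}{L_q(\widetilde\Omega;X)}^q
  =\mathfrak{m}_{q,X}^q\Norm{f}{L_q(\Omega;X)}^q.
\end{equation*}
Because $\Exp_{\mathcal{A}}$ is an $L_q$-contraction and $\Exp_{\mathcal{A}}D_n=(T_{\alpha^ns}-T_{\alpha^{n+1}s})f$, summing the $q$-th powers delivers the uniform-in-$s$ discrete bound from Step~1.

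\emph{Step 4 (assembly).} Inserting the discrete bound into the $\ud s/s$ integral over $[1,\alpha)$ contributes exactly the factor $\log\alpha$, and taking $q$-th roots yields the claim.

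The main obstacle is Step~2: Lemma~\ref{lem:Rota} produces a Rota representation for a \emph{single} operator, whereas the argument above requires a coherent decreasing filtration that simultaneously represents the entire geometric orbit $\{T_{\alpha^ns}\}_{n\in\Z}$. This simultaneous dilation is standard in the symmetric-diffusion literature via Stein's iterated construction, and the passage to vector-valued functions is handled by routine tensorisation of conditional expectations; the tails $n\to\pm\infty$ are controlled by the strong continuity of $\{T_t\}_{t>0}$ and the $L_q$-convergence of (reverse) martingales.
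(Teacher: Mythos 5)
The paper cites this lemma from \cite[Lemma 24]{HN} without proof, so there is no in-paper argument to compare against; the relevant comparison is with \cite{HN}. Your Steps 1, 3, and 4 are correct and match the structure of that argument: the change of variables $t=\alpha^n s$ with $n\in\Z$, $s\in[1,\alpha)$, and $\ud t/t=\ud s/s$ reduces the claim to the discrete inequality $\sum_{n\in\Z}\|(T_{\alpha^ns}-T_{\alpha^{n+1}s})f\|_{L_q(\Omega;X)}^q\le\mathfrak{m}_{q,X}^q\|f\|_{L_q(\Omega;X)}^q$ uniformly in $s$, after which $\int_1^\alpha \ud s/s=\log\alpha$ supplies the logarithmic factor; the cotype inequality then applies once the index is reversed and the contractivity of $\Exp_{\mathcal{A}}$ is used.

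The genuine gap, which you flag honestly, is Step~2. You invoke a single decreasing filtration $(\mathcal{F}_n)_{n\in\Z}$ dilating the entire geometric orbit $\{T_{\alpha^ns}\}_{n\in\Z}$. Lemma~\ref{lem:Rota} dilates a single square $T=S^2$, and Stein's iterated form of Rota's theorem produces a decreasing filtration dilating only an \emph{arithmetic} sequence $\{T_{n\tau}\}_{n\ge0}$ (take $Q=T_{\tau/2}$, so $Q^{2n}=T_{n\tau}$); no single $\tau$ puts every $\alpha^n s$, $n\in\Z$, on that grid, so calling the geometric dilation ``standard'' overstates what is available off the shelf. The correct bridge is a limiting argument: truncate to $-N\le n\le N$; for a fine mesh $\tau>0$ choose integers $m_{-N}<\cdots<m_N$ with $m_n\tau\to\alpha^n s$ as $\tau\to0$; apply Rota's dilation to $Q=T_{\tau/2}$ so that $(M_{m_n})_n$ is a finite reverse martingale and the cotype inequality gives $\sum_{n=-N}^{N-1}\|(T_{m_n\tau}-T_{m_{n+1}\tau})f\|_{L_q(\Omega;X)}^q\le\mathfrak{m}_{q,X}^q\|f\|_{L_q(\Omega;X)}^q$; then let $\tau\to0$ using strong continuity of the (positive, hence tensorised) semigroup on $L_q(\Omega;X)$, and finally $N\to\infty$ by monotone convergence. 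This recovers the uniform-in-$s$ discrete bound with the sharp constant and completes your outline.
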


The following lemma shows Theorem \ref{heat} in the case $p=q$ and $k=1,2$, but the bound provided by the lemma is worse than that asserted in the theorem when $k>2$. Thus, some more work will be needed further below to obtain Theorem \ref{heat} as stated.

\begin{lemma}[quantitative version of \cite{X}, Lemma 13]\label{p=q}
Let $X$ be a Banach space of martingale cotype $2\le q<\infty$ with cotype constant $\mathfrak{m}_{q,X}$ and $k$ be a positive integer. Moreover, suppose that $\{T_t\}_{t>0}$ is a symmetric diffusion semigroup on a measure space $(\Omega,X)$. Then for any $f\in L_q(\Omega;X)$ we have
\begin{equation}\label{pqk}
  \bigg(\int_0^\infty\big\|t^k\partial^k T_tf\big\|_{L_q(\Omega;X)}^q\,\frac{dt}{t}\bigg)^{1/q}\lesssim k^k B^k\mathfrak{m}_{q,X}\|f\|_{L_q(\Omega;X)},
\end{equation}
where $B:=q^2(\mathfrak{m}_{q,X})^{2q+1}(1+\log(q)+q\log(\mathfrak{m}_{q,X}))$ is the constant appearing in \eqref{main const.} of Corollary \ref{coro:main 2}.
\end{lemma}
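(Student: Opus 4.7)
The plan is to prove the bound in two steps: first an algebraic reduction from general $k$ to the case $k=1$, using only the analyticity from Corollary \ref{coro:main 2}; then a separate base-case argument for $k=1$ that brings in Lemma \ref{lem:HN} to extract the martingale-cotype factor $\mathfrak{m}_{q,X}$. Throughout write $Y:=L_q(\Omega;X)$ and $G_k(f):=\bigl(\int_0^\infty \|t^k\partial^k T_t f\|_Y^q\,dt/t\bigr)^{1/q}$.

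\emph{Reduction of $G_k$ to $G_1$.} Commuting the semigroup with its generator gives $\partial^k T_t = A^k T_t = (A T_{t/k})^k$, whence, writing $\psi(s):=sAT_s=s\partial T_s$,
$$t^k\partial^k T_t f \;=\; k^k\,\psi(t/k)^k f\;=\;k^k\,\psi(t/k)^{k-1}\bigl(\psi(t/k)f\bigr).$$
By Corollary \ref{coro:main 2}, $\|\psi(s)\|_{Y\to Y}\le B$ uniformly in $s$; submultiplicativity of the operator norm yields pointwise in $t$ the bound $\|t^k\partial^k T_t f\|_Y \le k^k B^{k-1}\,\|\psi(t/k)f\|_Y$. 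Taking the $L^q(dt/t)$-norm and changing variable $s=t/k$ gives $G_k(f)\le k^k B^{k-1}\,G_1(f)$, so the entire lemma reduces to proving $G_1(f)\lesssim B\,\mathfrak{m}_{q,X}\|f\|_Y$.

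\emph{Base case $k=1$.} For a parameter $\alpha>1$ to be chosen, the fundamental theorem of calculus combined with the Taylor identity $\partial T_s f = \partial T_t f + \int_t^s \partial^2 T_u f\,du$ (and a Fubini swap) rearranges to
$$(\alpha-1)\,t\partial T_t f \;=\; \bigl(T_{\alpha t}f-T_t f\bigr)\;-\;\int_t^{\alpha t}(\alpha t-u)\,\partial^2 T_u f\,du.$$
Taking $Y$-norms then $L^q(dt/t)$-norms: Lemma \ref{lem:HN} controls the first right-hand term by $(\log\alpha)^{1/q}\mathfrak{m}_{q,X}\|f\|_Y$; the error term, after the substitution $u=tv$ and Minkowski's integral inequality, is bounded by $G_2(f)\cdot\int_1^\alpha (\alpha-v)v^{-2}\,dv=(\alpha-1-\log\alpha)\,G_2(f)$. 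Combined with the reduction $G_2(f)\le 4B\,G_1(f)$ and rearranging,
$$\bigl[(\alpha-1)-4B(\alpha-1-\log\alpha)\bigr]\,G_1(f)\;\lesssim\;(\log\alpha)^{1/q}\mathfrak{m}_{q,X}\|f\|_Y.$$
Since $\alpha-1-\log\alpha\sim (\alpha-1)^2/2$ as $\alpha\downarrow 1$, the choice $\alpha-1\asymp 1/B$ keeps the bracket on the left of order $1/B$; extracting $G_1(f)$ then delivers $G_1(f)\lesssim B\cdot(\log(1+1/B))^{1/q}\mathfrak{m}_{q,X}\|f\|_Y\lesssim B\,\mathfrak{m}_{q,X}\|f\|_Y$, as required.

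The main obstacle is that the rearrangement above is only legitimate once $G_2(f)$ is known to be finite. To justify this I would first run the argument on the truncated functional $G_k^{[\eps,T]}(f):=(\int_\eps^T \|t^k\partial^k T_t f\|_Y^q\,dt/t)^{1/q}$: the pointwise analyticity estimate $\|t^k\partial^k T_t\|_{Y\to Y}\lesssim (kB)^k$, obtained by iterating Corollary \ref{coro:main 2} via Cauchy's integral formula on the holomorphic extension $T_z$, makes each $G_k^{[\eps,T]}(f)$ finite; the argument above then produces a bound on $G_1^{[\eps,T]}(f)$ uniform in the truncation, and monotone convergence as $\eps\downarrow 0$, $T\uparrow\infty$ closes the estimate. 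Tracking the constants carefully through both the reduction and the base-case rearrangement — and confirming that the implicit constants remain absolute rather than dependent on $k$ — is the other delicate quantitative part that the lemma's explicit $k^k B^k$ factor demands.
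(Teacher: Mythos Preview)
Your reduction from $G_k$ to $G_1$ via $t^k\partial^k T_t = k^k\bigl(\tfrac{t}{k}\partial T_{t/k}\bigr)^k$ is exactly what the paper does. The difference lies entirely in how you treat the base case $k=1$.

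The paper's route is more direct: it writes
\[
\partial T_t f=\sum_{j=-1}^\infty\bigl(\partial T_{2^{j+1}t}-\partial T_{2^{j+2}t}\bigr)f
=\sum_{j=-1}^\infty\partial T_{2^{j}t}\bigl(T_{2^{j}t}-T_{3\cdot 2^{j}t}\bigr)f,
\]
applies the triangle inequality and a change of variable to reduce to a single term $4\bigl(\int_0^\infty\|t\partial T_t(T_t-T_{3t})f\|^q\,dt/t\bigr)^{1/q}$, then uses the analyticity bound $\|t\partial T_t\|\le B$ once and Lemma~\ref{lem:HN} with $\alpha=3$. This gives $G_1(f)\lesssim B\mathfrak m_{q,X}\|f\|$ without any bootstrap or a~priori finiteness issue; the telescoping series is designed so that the operator $t\partial T_t$ lands directly on a difference $(T_t-T_{3t})f$ to which Lemma~\ref{lem:HN} applies.

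Your approach, via the Taylor-type identity and the self-improving step $G_2\le 4B\,G_1$ with an optimized $\alpha-1\asymp 1/B$, also reaches the same bound, and the computation is correct. The cost is the bootstrap: you must know $G_1$ (or $G_2$) is finite before rearranging. Your truncation sketch is essentially right, but note that under the change of variables the relevant intervals shift (the error term produces $G_2^{[\eps,\alpha T]}$ and then $G_1^{[\eps/2,\alpha T/2]}$, not $G_1^{[\eps,T]}$); one has to absorb the extra piece $G_1^{[\eps/2,\eps]}(f)\le B(\log 2)^{1/q}\|f\|$ as an additional harmless term on the right. This closes, but it is a genuine extra wrinkle that the paper's telescoping argument avoids entirely.
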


\begin{remark}
Later on we will only use this lemma in the case $k=1$ and we will get a {different} bound {form} the one appearing in \eqref{pqk} for $k>1$. 
\end{remark}

\begin{proof}
We will use the idea of the proof of \cite[Theorem 17]{HN}.
By the semigroup identity $\partial T_{t+s}=\partial T_t\,T_s$ and the convergence
\begin{equation*}
  \|\partial T_t\|=\frac{1}{t}\|t\partial T_t\|\leq\frac{B}{t}\underset{t\to\infty}{\longrightarrow} 0
\end{equation*}
from \eqref{main const.}, we can expand
\begin{equation}
  \partial T_tf=\sum_{k=-1}^\infty\big(\partial T_{2^{k+1}t}- \partial T_{2^{k+2}t}\big)f=\sum_{k=-1}^\infty\partial T_{2^{k}t}\big(T_{2^{k}t}- T_{3\cdot2^{k}t}\big)f.
\end{equation}
Then by the triangle inequality we can estimate
\begin{equation}\label{main estim.1}
\begin{split}
  \big(\int_0^\infty &\big\|t\partial T_tf\big\|_{L_q(\Omega;X)}^q\,\frac{dt}{t}\big)^{1/q} \\
  &\le \sum_{k=-1}^\infty \bigg(\int_0^\infty\big\|t\partial T_{2^{k}t}\big(T_{2^{k}t}- T_{3\cdot2^{k}t}\big)f\big\|_{L_q(\Omega;X)}^q\,\frac{dt}{t}\bigg)^{1/q}  \\
  &= \sum_{k=-1}^\infty2^{-k}\bigg(\int_0^\infty \big\|t\partial T_t\big(T_{t}-T_{3t}\big)f\big\|_{L_q(\Omega;X)}^q\,\frac{dt}{t}\bigg)^{1/q}  \\
  &=4\bigg(\int_0^\infty\big\|t\partial T_t\big(T_{t}- T_{3t}\big)f\big\|_{L_q(\Omega;X)}^q\,\frac{dt}{t}\bigg)^{1/q}.
\end{split}
\end{equation}

We are now in a position of using Corollary \ref{coro:main 2} as follows:
\begin{equation}\label{eq1:main}
\big\|t\partial T_t\big(T_{t}- T_{3t}\big)f\big\|_{L_q(\Omega;X)}\lesssim B \big\|\big(T_{t}- T_{3t}\big)f\big\|_{L_q(\Omega;X)}\,,\quad\forall\,t>0.
\end{equation}
where $B:=q^2(\mathfrak{m}_{q,X})^{2q+1}(1+\log(q)+q\log(\mathfrak{m}_{q,X}))$.

Combining the above inequalities together with Lemma \ref{lem:HN}, we derive
\begin{equation*}
\begin{split}
  \bigg(\int_0^\infty\big\|t\partial  T_tf\big\|_{L_q(\Omega;X)}^q\,\frac{dt}{t}\bigg)^{1/q}&\lesssim 
  B\bigg(\int_0^\infty\big\|\big(T_{t}- T_{3t}\big)f\big\|_{L_q(\Omega;X)}^q\,\frac{dt}{t}\bigg)^{1/q}  \\
  &\lesssim B(\log(3))^{\frac{1}{q}}\mathfrak{m}_{q,X}\big\|f\big\|_{L_q(\Omega;X)}  \\
  &\lesssim B\mathfrak{m}_{q,X}\big\|f\big\|_{L_q(\Omega;X)}.
\end{split}
\end{equation*}
This is \eqref{pqk} for $k=1$. To handle a general $k$, by the semigroup identity $T_{t+s}=T_tT_s$, we have
\begin{equation*} 
  t^k\partial^k T_t=k^k\left(\frac{t}k\,\partial T_{\frac{t}k}\right)^k.
\end{equation*}
This implies that 
\begin{equation}\label{eq2:main}
  \bigg(\int_0^\infty\big\|t^k\partial^k T_tf\big\|_{L_q(\Omega;X)}^q\,\frac{dt}{t}\bigg)^{1/q}=k^{k}\bigg(\int_0^\infty\big\|(t\partial T_{t})^{k} f\big\|_{L_q(\Omega;X)}^{q}\,\frac{dt}{t}\bigg)^{1/q}.
\end{equation}
Thus, by \eqref{eq1:main}, \eqref{eq2:main} and the already proved inequality, we obtain
\begin{equation*}
\begin{split}
  \eqref{eq2:main}&\lesssim k^{k}\bigg(\int_0^\infty\big\|(t\partial T_{t})^{k-1}(t\partial T_{t}) f\big\|_{L_q(\Omega;X)}^{q}\,\frac{dt}{t}\bigg)^{1/q}  \\
  &\lesssim k^k B^{k-1}\bigg(\int_0^\infty\big\|t\partial  T_tf\big\|_{L_q(\Omega;X)}^q\,\frac{dt}{t}\bigg)^{1/q}  \\
  &\lesssim k^k B^k\mathfrak{m}_{q,X}\|f\|_{L_q(\Omega;X)}.
\end{split}
\end{equation*}
The lemma is thus proved.
\end{proof}

To show Theorem \ref{heat} for any $1<p<\infty$, we will use Stein's complex interpolation machinery. To that end, we will need the notion of fractional integrals. For a (nice) function $\varphi$ on $(0,\,\infty)$ we define
\begin{equation*}
  \mathrm I^\alpha\varphi(t)=\frac1{\Gamma(\alpha)}\,\int_0^t (t-s)^{\alpha-1}\varphi(s)ds,\quad t>0.
\end{equation*}
The integral on the right hand side is well defined for any $\alpha\in\C$ with $\Re\,\alpha>0$; moreover, $\mathrm I^\alpha\varphi$ is analytic in the right  half complex plane $\Re\,\alpha>0$. Using integration by parts, Stein showed in \cite[Section III.3]{S} that $\mathrm I^\alpha\varphi$ has an analytic continuation to the whole complex plane, which satisfies the following properties:
\begin{enumerate}[(1)]
  \item $\mathrm I^\alpha\mathrm I^\beta\varphi=\mathrm I^{\alpha+\beta}\varphi$ for any $\alpha,\beta\in\C$;
  \item $\mathrm I^0\varphi=\varphi$; and 
  \item $\mathrm I^{-k}\varphi=\partial^k\varphi$ for any positive integer $k$.
\end{enumerate}
We will apply $\mathrm I^\alpha$ to $\varphi$ defined by $\varphi(s)=T_sf$ for a given function $f$ in $L_p(\Omega; X)$ and set
\begin{equation*}
  \mathrm M^\alpha_tf=t^{-\alpha} \mathrm I^\alpha \varphi(t)\;\text{with }\; \varphi(s)=T_sf.
\end{equation*} 
Note that
\begin{equation*}
  \mathrm M^1_tf=\frac{1}{t}\,\int_0^tT_sfds,\;\; \mathrm M^0_tf=T_tf\;\text{ and}\; \mathrm M^{-k}_tf=t^k \partial^k T_tf\;\text{for}\;k\in\N.
\end{equation*}
The following lemma is \cite[Theorem 2.3]{MTX}:

\begin{lemma}[\cite{MTX}, Theorem 2.3 and \cite{X}, Lemma 14]\label{average}
Let $q$ and $X$ be as in Theorem \ref{heat}. Then for any $1<p<\infty$ we have
\begin{equation*}
  \bigg\|\bigg(\int_0^\infty\big\|t\partial \mathrm M^1_tf\big\|_{X}^q\,\frac{dt}{t}\bigg)^{1/q}\bigg\|_{L_p(\Omega)}\lesssim_{p,q}\mathfrak{m}_{q,X}\big\|f\big\|_{L_p(\Omega;X)}\,,\quad\forall\,f\in L_p(\Omega; X).  
\end{equation*}
\end{lemma}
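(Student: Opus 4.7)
The plan is to prove this in two stages. First, handle the diagonal case $p=q$ directly using Lemma \ref{lem:HN}, which is already at our disposal. Second, extrapolate to general $1<p<\infty$ by combining this endpoint with a vector-valued Stein maximal inequality, via complex interpolation along the analytic family of fractional integrals $\{\mathrm M^\alpha_t\}$ introduced in the excerpt. The starting point is the elementary identity (obtained by differentiating $\mathrm M^1_tf=t^{-1}\int_0^t T_sf\,ds$)
\begin{equation*}
  t\partial\mathrm M^1_tf \;=\; T_tf-\mathrm M^1_tf \;=\; \frac{1}{t}\int_0^t (T_t-T_s)f\,ds.
\end{equation*}

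For the case $p=q$, perform the change of variable $s=t/\alpha$ (so $\alpha\in[1,\infty)$ and $ds=-t\alpha^{-2}\,d\alpha$) to rewrite the identity as
\begin{equation*}
  T_tf-\mathrm M^1_tf \;=\; \int_1^\infty (T_t-T_{t/\alpha})f\,\frac{d\alpha}{\alpha^2}.
\end{equation*}
Take $L_q(\Omega;X)$-norms and apply Minkowski's integral inequality with respect to the measure $d\alpha/\alpha^2$ to move the $\alpha$-integral outside the outer $L^q_{dt/t}(L_q(\Omega;X))$-norm. Inside each resulting norm, a second change of variable $u=t/\alpha$ recasts it as $\|(T_{\alpha u}-T_u)f\|_{L^q_{du/u}(L_q(\Omega;X))}$, which is bounded by $(\log\alpha)^{1/q}\mathfrak{m}_{q,X}\|f\|_{L_q(\Omega;X)}$ via Lemma \ref{lem:HN}. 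The remaining integral $\int_1^\infty(\log\alpha)^{1/q}\alpha^{-2}\,d\alpha=\Gamma(1+\tfrac{1}{q})$ is finite and depends only on $q$, which gives the $p=q$ case with the desired constant $\lesssim_q \mathfrak{m}_{q,X}$.

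For general $1<p<\infty$, the Fubini-type exchange used above is unavailable, so combine the $p=q$ endpoint with the vector-valued Stein maximal inequality
\begin{equation*}
  \Big\|\sup_{t>0}\|\mathrm M^1_tf\|_X\Big\|_{L_p(\Omega)}\;\lesssim_p\;\|f\|_{L_p(\Omega;X)}\qquad(1<p\le\infty),
\end{equation*}
which follows from the pointwise domination $\|\mathrm M^1_tf(x)\|_X\le\mathrm M^1_t(\|f(\cdot)\|_X)(x)$ (valid because each $T_s$ is positivity preserving and the norm is convex) together with Stein's classical scalar maximal theorem for symmetric diffusion semigroups \cite[Chapter III]{S}. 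Then apply Stein's complex interpolation theorem to an analytic family built from $\{\mathrm M^\alpha_t\}_{\Re\alpha\ge 0}$, interpolating between the $p=q$ square-function endpoint above and the maximal endpoint just established. The main obstacle is the verification of Stein's interpolation hypotheses (analyticity and admissible growth along a vertical strip) in the Banach-valued setting, and the confirmation that the interpolation preserves the dependence solely on $\mathfrak{m}_{q,X}$; this is possible because both endpoint estimates already carry only this constant together with $p,q$, and no further geometric invariant of $X$ is introduced along the strip.
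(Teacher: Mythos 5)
Your $p=q$ case is clean and correct, and it is a nice alternative to the cited approach: the identity $t\partial\mathrm M^1_tf=\int_1^\infty(T_t-T_{t/\alpha})f\,\frac{d\alpha}{\alpha^2}$, followed by Minkowski's integral inequality, a change of variables, Lemma~\ref{lem:HN}, and the computation $\int_1^\infty(\log\alpha)^{1/q}\alpha^{-2}\,d\alpha=\Gamma(1+\tfrac1q)$, delivers precisely the $p=q$ bound with constant $\lesssim_q\mathfrak{m}_{q,X}$.

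The general-$p$ step, however, contains a genuine gap. The two endpoints you propose to interpolate --- the $p=q$ square-function bound, which is a map $L_q(\Omega;X)\to L_q\bigl(\Omega;L^q_{dt/t}(X)\bigr)$, and the vector-valued maximal inequality, a map $L_p(\Omega;X)\to L_p\bigl(\Omega;L^\infty_t(X)\bigr)$ --- carry \emph{different inner exponents} ($q$ and $\infty$). Complex interpolation of these mixed-norm spaces gives $L_r\bigl(\Omega;L^s_{dt/t}(X)\bigr)$ with $1/s=(1-\theta)/q$, hence $s>q$ for any $\theta>0$. So the scheme can only yield a \emph{weaker} square function with exponent $s>q$ when $p\neq q$; it cannot produce the target $L^q_{dt/t}$ norm at a different outer exponent. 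Introducing the analytic family $\{\mathrm M^\alpha_t\}$ does not repair this, because varying $\alpha$ does not push the inner exponent back down to $q$. Note also that the Stein-type interpolation used in the proof of Theorem~\ref{heat} takes Lemma~\ref{average} as an \emph{input} (through Lemma~\ref{a>1}), so it cannot be turned around to prove Lemma~\ref{average}. In the cited reference (\cite{MTX}, Theorem 2.3) the general-$p$ case is handled in a single stroke with no interpolation at all: Rota's dilation theorem (Lemma~\ref{lem:Rota}) writes $T=\Exp_\mathcal{A}\Exp_\mathcal{B}$, so after discretizing $t$ the $g$-function of $\mathrm M^1_t$ is dominated by a martingale square function, to which the $L_p$-version of the martingale cotype inequality applies for every $1<p<\infty$ with constant $\lesssim_{p,q}\mathfrak{m}_{q,X}$. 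You would need to supply an argument of this type (or, alternatively, a duality plus maximal-function argument in the spirit of Stein) to close the $p\neq q$ case.
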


\begin{lemma}[\cite{X}, Lemma 15]\label{a-b}
Let $\alpha$ and $\beta$ be complex numbers such that $\Re\alpha>\Re\beta>-1$. Then for any positive integer $k$
\begin{equation*}
  \bigg(\int_0^\infty\big\|t^k\partial^k \mathrm M^\alpha_tf\big\|_{X}^q\,\frac{dt}{t}\bigg)^{\frac{1}{q}}
  \lesssim_{\Re\alpha,\Re\beta}e^{\pi
  |{\rm Im}\, (\alpha-\beta)|}\left(\int_0^\infty\big\|t^k\partial^k \mathrm M^\beta_tf\big\|_{X}^q\,\frac{dt}{t}\right)^{\frac{1}{q}}
\end{equation*}
on $\Omega$.
\end{lemma}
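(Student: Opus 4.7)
The plan is based on the composition identity $\mathrm I^\alpha = \mathrm I^{\alpha-\beta}\mathrm I^\beta$ for fractional integrals, which under our hypothesis $\Re(\alpha-\beta)>0$ allows the outer factor to be written through its classical Riemann--Liouville integral representation. Applied to $\varphi(s)=T_sf$, and using $\mathrm I^\beta\varphi(s)=s^\beta\mathrm M^\beta_sf$, this yields
\begin{equation*}
\mathrm I^\alpha\varphi(t)=\frac{1}{\Gamma(\alpha-\beta)}\int_0^t(t-s)^{\alpha-\beta-1}s^\beta\mathrm M^\beta_sf\,ds.
\end{equation*}
Dividing by $t^\alpha$ and making the change of variable $s=ut$ produces the scale-invariant representation
\begin{equation*}
\mathrm M^\alpha_tf=\frac{1}{\Gamma(\alpha-\beta)}\int_0^1(1-u)^{\alpha-\beta-1}u^\beta\mathrm M^\beta_{ut}f\,du,
\end{equation*}
where the integrability at the endpoints $u=0$ and $u=1$ requires exactly the condition $\Re\alpha>\Re\beta>-1$.

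Next I would differentiate under the integral sign. Writing $F_\gamma(s):=s^k\partial^k\mathrm M^\gamma_sf$ and using the scaling identity $t^k\partial_t^k[\mathrm M^\beta_{ut}f]=(ut)^k\partial_s^k\mathrm M^\beta_sf\big|_{s=ut}=F_\beta(ut)$, the representation above gives
\begin{equation*}
F_\alpha(t)=\frac{1}{\Gamma(\alpha-\beta)}\int_0^1(1-u)^{\alpha-\beta-1}u^\beta F_\beta(ut)\,du.
\end{equation*}
Taking the $X$-norm, using $|(1-u)^{\alpha-\beta-1}|=(1-u)^{\Re(\alpha-\beta)-1}$ and $|u^\beta|=u^{\Re\beta}$, applying Minkowski's integral inequality in $L^q((0,\infty),dt/t;X)$ pointwise in $\omega\in\Omega$, and exploiting the scale invariance $\|F_\beta(u\cdot)\|_{L^q(dt/t;X)}=\|F_\beta\|_{L^q(dt/t;X)}$ of the measure $dt/t$, I arrive at
\begin{equation*}
\|F_\alpha\|_{L^q(dt/t;X)}\le\frac{B(\Re\beta+1,\Re(\alpha-\beta))}{|\Gamma(\alpha-\beta)|}\,\|F_\beta\|_{L^q(dt/t;X)},
\end{equation*}
where $B(\cdot,\cdot)$ denotes the Beta function.

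It remains to estimate the Gamma-function factor. The Beta value $B(\Re\beta+1,\Re(\alpha-\beta))=\Gamma(\Re\beta+1)\Gamma(\Re(\alpha-\beta))/\Gamma(\Re\alpha+1)$ is finite and depends only on $\Re\alpha$ and $\Re\beta$. For the reciprocal of $|\Gamma(\alpha-\beta)|$, writing $\alpha-\beta=x+iy$ with $x>0$, Stirling's asymptotic formula gives $|\Gamma(x+iy)|\sim\sqrt{2\pi}\,|y|^{x-1/2}e^{-\pi|y|/2}$ as $|y|\to\infty$, while $1/|\Gamma(x+iy)|$ is continuous and bounded for $y$ in any compact set. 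Absorbing the polynomial factor $|y|^{1/2-x}$ into the exponential yields $1/|\Gamma(\alpha-\beta)|\le C_{\Re(\alpha-\beta)}\,e^{\pi|\Im(\alpha-\beta)|}$, which delivers exactly the stated bound with an implicit constant depending only on $\Re\alpha$ and $\Re\beta$. The main subtlety is this last estimate: Stirling naturally supplies $e^{\pi|y|/2}$, but the slightly larger exponent $\pi|y|$ in the statement provides precisely the room needed to absorb both the Stirling polynomial correction and any constant coming from the compact-range continuity of $1/|\Gamma|$, in one uniform bound.
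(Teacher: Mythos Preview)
Your argument is correct and is precisely the standard route to this inequality. Note, however, that the paper does not supply its own proof of this lemma; it simply quotes \cite[Lemma~15]{X}. Your proof is in fact the same one given by Xu (and, in essence, by Stein in \cite[Section~III.3]{S}): write $\mathrm I^\alpha=\mathrm I^{\alpha-\beta}\mathrm I^\beta$, pass to the scale-invariant form
\[
\mathrm M^\alpha_t f=\frac{1}{\Gamma(\alpha-\beta)}\int_0^1(1-u)^{\alpha-\beta-1}u^\beta\,\mathrm M^\beta_{ut}f\,du,
\]
differentiate and apply Minkowski in $L^q(dt/t;X)$, then bound $1/|\Gamma(\alpha-\beta)|$ via Stirling. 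Your observation that the statement's exponent $e^{\pi|\Im(\alpha-\beta)|}$ (rather than the sharp $e^{\pi|\Im(\alpha-\beta)|/2}$ from Stirling) absorbs the polynomial correction is exactly right. The only point one might flag in a fully rigorous write-up is the justification of differentiation under the integral sign, but this is routine for the semigroup functions $\varphi(s)=T_sf$ under consideration.
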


Combining Lemma \ref{average} and Lemma \ref{a-b} with $k=\beta=1$, we get:
\begin{lemma}[\cite{X}, Lemma 16]\label{a>1}
For any $1<p<\infty$ and $\alpha\in\C$ with $\Re\alpha>1$
\begin{equation*}
  \bigg\|\bigg(\int_0^\infty\big\|t\partial \mathrm M^\alpha_tf\big\|_{X}^q\,\frac{dt}{t}\bigg)^{1/q}\bigg\|_{L_p(\Omega)}
  \lesssim_{p,q,\Re\alpha}\mathfrak{m}_{q,X} e^{\pi|{\rm Im}\,\alpha|}\big\|f\big\|_{L_p(\Omega;X)},
\end{equation*}
for all $f\in L_p(\Omega; X)$.
\end{lemma}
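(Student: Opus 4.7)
The statement is essentially a one-line consequence of chaining Lemma~\ref{average} with Lemma~\ref{a-b}, so the plan is to carry out exactly this composition, being careful about which constants depend on what. The key observation is that $\beta=1$ satisfies $\Re\beta=1>-1$ and is admissible in Lemma~\ref{a-b}, and moreover $\Im\beta=0$, so that $|\Im(\alpha-\beta)|=|\Im\alpha|$; this is what will produce the $e^{\pi|\Im\alpha|}$ factor advertised in the conclusion.

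First, I would fix $\alpha\in\C$ with $\Re\alpha>1$ and apply Lemma~\ref{a-b} with $k=1$ and $\beta=1$. This is a pointwise estimate on $\Omega$ of the form
\begin{equation*}
  \bigg(\int_0^\infty\big\|t\partial \mathrm M^\alpha_tf\big\|_{X}^q\,\frac{dt}{t}\bigg)^{1/q}\lesssim_{\Re\alpha}e^{\pi|\Im\alpha|}\bigg(\int_0^\infty\big\|t\partial \mathrm M^1_tf\big\|_{X}^q\,\frac{dt}{t}\bigg)^{1/q},
\end{equation*}
where the implied constant absorbs the dependence on $\Re\beta=1$ (a fixed number) into the dependence on $\Re\alpha$. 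Next, I would take $L_p(\Omega)$ norms of both sides; since the right-hand side already consists of a nonnegative scalar function of $\omega\in\Omega$ multiplied by the scalar factor $e^{\pi|\Im\alpha|}$, the $L_p(\Omega)$ norm commutes with the multiplicative constant and I obtain
\begin{equation*}
  \bigg\|\bigg(\int_0^\infty\big\|t\partial \mathrm M^\alpha_tf\big\|_{X}^q\,\frac{dt}{t}\bigg)^{1/q}\bigg\|_{L_p(\Omega)}\lesssim_{\Re\alpha}e^{\pi|\Im\alpha|}\,\bigg\|\bigg(\int_0^\infty\big\|t\partial \mathrm M^1_tf\big\|_{X}^q\,\frac{dt}{t}\bigg)^{1/q}\bigg\|_{L_p(\Omega)}.
\end{equation*}

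Finally, I would invoke Lemma~\ref{average} to bound the remaining right-hand side by $\mathfrak{m}_{q,X}\|f\|_{L_p(\Omega;X)}$, up to a constant depending on $p$ and $q$. Concatenating the two estimates gives
\begin{equation*}
  \bigg\|\bigg(\int_0^\infty\big\|t\partial \mathrm M^\alpha_tf\big\|_{X}^q\,\frac{dt}{t}\bigg)^{1/q}\bigg\|_{L_p(\Omega)}\lesssim_{p,q,\Re\alpha}\mathfrak{m}_{q,X}\,e^{\pi|\Im\alpha|}\,\|f\|_{L_p(\Omega;X)},
\end{equation*}
which is the desired bound. There is no real obstacle in this proof: the heavy lifting has been done in Lemmas~\ref{average} and~\ref{a-b}, and the only point requiring minor care is verifying that $\beta=1$ is a legitimate choice in Lemma~\ref{a-b} (i.e.\ $\Re\alpha>\Re\beta=1>-1$) and that $\Im\beta=0$ so that the exponential factor reduces cleanly to $e^{\pi|\Im\alpha|}$.
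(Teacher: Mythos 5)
Your proposal is correct and coincides with the paper's argument: the paper itself states that Lemma~\ref{a>1} follows by combining Lemma~\ref{average} and Lemma~\ref{a-b} with $k=\beta=1$, which is exactly the chain of estimates you carry out (pointwise application of Lemma~\ref{a-b}, then $L_p(\Omega)$-norms, then Lemma~\ref{average}).
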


Now we provide the following new quantitative version of \cite[Lemma 17]{X} which we will use in the proof of Theorem \ref{heat}:

\begin{lemma}[\cite{X}, Lemma 17]\label{p=q2}
For any $\alpha\in\C$
\begin{equation}\label{q-a}
\begin{split}
  \bigg\|\bigg(\int_0^\infty &\big\|t\partial \mathrm M^\alpha_tf\big\|_{X}^q\,\frac{dt}{t}\bigg)^{1/q}\bigg\|_{L_q(\Omega)} \\
  &\lesssim_{\Re\alpha}(|\alpha|+{1})^{N(\Re{\alpha})} B^{1+N(\Re\alpha)}\mathfrak{m}_{q,X} e^{\pi |{\rm Im}\,\alpha|}  \big\|f\big\|_{L_q(\Omega;X)}, 
\end{split}  
\end{equation}
for all $f\in L_q(\Omega; X)$, where
\begin{equation}\label{lem:function N}
N(\Re{\alpha}):=\begin{cases}
      0, \text{if $\Re{\alpha}>0$},  \\
      \text{smallest $n\ge 0$ such that $n>-\Re{\alpha}$},
\end{cases}
\end{equation}
and $B:=q^2(\mathfrak{m}_{q,X})^{2q+1}(1+\log(q)+q\log(\mathfrak{m}_{q,X}))$ is the constant appearing in \eqref{main const.} of Corollary \ref{coro:main 2}.
\end{lemma}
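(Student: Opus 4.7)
The plan is to split the argument according to the sign of $\Re\alpha$. In the easier regime $\Re\alpha > 0$, I would only combine pieces already available: Lemma~\ref{p=q} applied with $k = 1$ gives
\begin{equation*}
  \Bigl(\int_0^\infty \|t\partial T_t f\|_{L_q(\Omega;X)}^q\,\frac{dt}{t}\Bigr)^{1/q} \lesssim B\,\mathfrak{m}_{q,X}\|f\|_{L_q(\Omega;X)},
\end{equation*}
which is precisely the bound at the reference value $\alpha_0 = 0$, since $\mathrm M^0_t = T_t$. Applying Lemma~\ref{a-b} with $k = 1$ and $\beta = 0$ transfers this to any $\alpha$ with $\Re\alpha > 0$ at the cost of a factor $e^{\pi|\Im\alpha|}$. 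Since $N(\Re\alpha) = 0$ in this range and $B \ge 1$, the resulting estimate fits inside the claimed bound (in fact with the sharper power $B$ in place of $B^2$).

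For $\Re\alpha \le 0$, I would shift to a reference with positive real part: put $N := N(\Re\alpha)$ and $\beta := \alpha + N$, so that $\Re\beta \in (0, 1]$. The fractional calculus identity $\mathrm I^\alpha = \partial^N \mathrm I^{\alpha+N}$ (obtained by combining properties (1) and (3) listed just before Lemma~\ref{average}) together with the Leibniz rule yield
\begin{equation*}
  \mathrm M^\alpha_t f = t^{-\alpha}\partial^N\bigl(t^\beta \mathrm M^\beta_t f\bigr) = \sum_{\ell=0}^{N} \binom{N}{\ell}(\beta)_{N-\ell}\,t^\ell \partial^\ell \mathrm M^\beta_t f,
\end{equation*}
where $(\beta)_k := \beta(\beta-1)\cdots(\beta-k+1)$. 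Differentiating once more via $t\partial(t^\ell \partial^\ell g) = \ell\, t^\ell \partial^\ell g + t^{\ell+1}\partial^{\ell+1} g$ produces
\begin{equation*}
  t\partial \mathrm M^\alpha_t f = \sum_{m=1}^{N+1} d_m(\alpha,N)\, t^m\partial^m \mathrm M^\beta_t f,
\end{equation*}
with scalars $d_m$ polynomial in $\alpha$ and $\sum_m |d_m| \lesssim (|\alpha|+2)^{N+1}$ by elementary binomial estimates. By Minkowski, it then suffices to control $\bigl(\int_0^\infty \|t^m \partial^m \mathrm M^\beta_t f\|_{L_q}^q\,dt/t\bigr)^{1/q}$ for $m = 1,\dots, N+1$. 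The case $m = 1$ reuses the first paragraph applied to $\beta$ in place of $\alpha$; for $m \ge 2$, Lemma~\ref{a-b} with $k = m$ and reference index $\beta' = 0$ reduces the matter to $\bigl(\int \|t^m\partial^m T_t f\|_{L_q}^q\,dt/t\bigr)^{1/q}$, to which Lemma~\ref{p=q} applies directly.

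The main obstacle will be extracting the \emph{uniform} power $B^2$ promised by the statement: the termwise route sketched above yields only $B^{N+1}$, since Lemma~\ref{p=q} costs $B^m$ for the $m$-th derivative, and these powers multiply across the summation in $m$. Sharpening from $B^{N+1}$ down to $B^2$ is the genuine content of the lemma; the natural route is Stein's complex interpolation applied to the analytic family $\alpha \mapsto \mathrm M^\alpha_t f$, interpolating between Lemma~\ref{average} at $\Re\alpha = 1$ (which carries no $B$ factor at all) and Lemma~\ref{p=q} at $\Re\alpha = -N_0$ for a large integer $N_0$ (which costs $B^{N_0+1}$). The $B$-exponent then grows only linearly in $\Re\alpha$ and does not multiply across the terms indexed by $m$, which is exactly what is needed to recover the uniform factor $B^2$ asserted in the lemma.
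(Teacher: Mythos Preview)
Your reduction for $\Re\alpha\le 0$ is essentially the paper's. Where you use the Leibniz expansion to write $t\partial\mathrm M^\alpha_t$ as a combination of $t^m\partial^m\mathrm M^\beta_t$ with $\Re\beta>0$, the paper iterates the one-step recursion
\[
  t^k\partial^k \mathrm M_t^{\alpha-1}=(k+\alpha)\,t^k\partial^k \mathrm M^{\alpha}_t+t^{k+1}\partial^{k+1}\mathrm M^{\alpha}_t
\]
by induction on $n$ (with $\Re\alpha>-n$); unrolling this recursion $N$ times reproduces exactly your Leibniz sum. The case $\Re\alpha>0$ is handled identically in both.

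The real divergence is your last paragraph. The paper does \emph{not} use Stein's complex interpolation in this lemma; that machinery appears only later, in the proof of Theorem~\ref{heat}, to pass from $p=q$ to general $p$. Here the paper asserts that the elementary induction already yields, for every $n$,
\[
  \phi_n(k,\alpha)\le (k+1)^{k+1}B^{k+1}\,C(\Re\alpha+n)\,\mathfrak m_{q,X}\,e^{\pi|\Im\alpha|}\,\psi_n(k,\alpha),
\]
with an auxiliary factor $\psi_n(k,\alpha)\lesssim_n(k+|\alpha|+1)^n$ carrying the combinatorial growth; specialising to $k=1$ gives the claimed $B^2$. Your worry about $B^{N+1}$ is well founded: the paper's own induction step produces $(k+2)^{k+2}B^{k+2}$ from the hypothesis $(k+1)^{k+1}B^{k+1}$, so the exponent $k+1$ does not literally close, and the written argument appears to deliver only $B^{N+1}$ for $k=1$, as you say. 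Your interpolation idea is thus a genuine alternative rather than a reconstruction of the paper---but note that interpolating between a $B$-free bound at $\Re\alpha>1$ and a $B^{N_0+1}$ bound at $\Re\alpha=-N_0$ still gives a $B$-exponent that grows linearly in $|\Re\alpha|$, so it does not obviously recover a uniform $B^2$ either.
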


\begin{proof}
Combining Lemmas \ref{p=q} and \ref{a-b} with $\beta=0$, we deduce that for a positive integer $k$ and $\alpha\in\C$ with $\Re\alpha>0$
\begin{equation}\label{iterate}
  \bigg\|\bigg(\int_0^\infty\big\|t^k\partial^k \mathrm M^\alpha_tf\big\|_{X}^q\,\frac{dt}{t}\bigg)^{1/q}\bigg\|_{L_q(\Omega)}
  \lesssim_{\Re\alpha} k^{k}B^k\mathfrak{m}_{q,X} e^{\pi |{\rm Im}\,\alpha|}\big\|f\big\|_{L_q(\Omega;X)},    
\end{equation}
for all $f\in L_q(\Omega; X)$.

In order to prove the general case for any $\alpha\in\C$, we will use an induction argument. In particular, we will prove the following estimate for any $\alpha\in\C$ with $\Re\alpha>-n$, $n\in\N$
\begin{equation}\label{iterate 1}
  \|t^{k}\partial^{k}\mathrm M^{\alpha}_t f\|_{L_q(\Omega; L_q((\R_+,\frac{dt}{t});X))}\le\phi_n(k,\alpha)\big\|f\big\|_{L_q(\Omega;X)},
\end{equation}
where
\begin{equation}\label{ineq:phi}
  \phi_n(k,\alpha)\le
  {k^k B^k C(\Re\alpha+n)\mathfrak{m}_{q,X}e^{\pi |{\rm Im}\,\alpha|}\psi_n(k,\alpha),  }
\end{equation}
$C(\Re\alpha+n)$ is a positive constant that depends on $\Re\alpha+n$ and
\begin{equation}\label{ineq:psi}
  \psi_n(k,\alpha)\le
  {  c^n(k+|\alpha|)^n B^n.}
\end{equation}

Noting that for any $\alpha\in\C$
\begin{equation*}
  \partial\mathrm M^\alpha_t=-\alpha t^{-1} \mathrm M^\alpha_t+ t^{-1} \mathrm M^{\alpha-1}_t,
\end{equation*}
we have
\begin{equation}\label{a-1}
  t^k\partial^k \mathrm M_t^{\alpha-1}
  =(k+\alpha) t^k\partial^k \mathrm M^{\alpha}_t+t^{k+1}\partial^{k+1} \mathrm M^{\alpha}_t.    
\end{equation}

Observe that for $n=0$, by what is already proved in $\eqref{iterate}$ we have 
\begin{equation*}
  \|t^{k}\partial^{k}\mathrm M^{\alpha}_t f\|_{L_q(\Omega; L_q((\R_+,\frac{dt}{t});X))}\le\phi_0(k,\alpha)\big\|f\big\|_{L_q(\Omega;X)},
\end{equation*}
where $\phi_0(k,\alpha)\le k^{k}B^k C(\Re\alpha)\mathfrak{m}_{q,X} e^{\pi |{\rm Im}\,\alpha|}\psi_0(k,\alpha)$ and $\psi_0(k,\alpha)=1$.

Now, let us assume that \eqref{iterate 1} holds for some $n\in\N$ and we prove it for $n+1$. To be more precise, if $\Re\tilde{\alpha}>-n$, where $\tilde{\alpha}:=\alpha+1$, then using the induction hypothesis \eqref{iterate 1} and \eqref{a-1} we deduce that
\begin{multline}\label{iteration 2}
  \|t^{k}\partial^{k}\mathrm M^{\tilde{\alpha}-1}_t f\|_{L_q(\Omega; L_q((\R_+,\frac{dt}{t});X))}  \\
  \le|k+\tilde{\alpha}|\|t^{k}\partial^{k}\mathrm M^{\tilde{\alpha}}_t f\|_{L_q(\Omega; L_q((\R_+,\frac{dt}{t});X))}+\|t^{k+1}\partial^{k+1}\mathrm M^{\tilde{\alpha}}_t f\|_{L_q(\Omega; L_q((\R_+,\frac{dt}{t});X))}  \\
  \le(|k+\tilde{\alpha}|\phi_n(k,\tilde{\alpha})+\phi_n(k+1,\tilde{\alpha}))\big\|f\big\|_{L_q(\Omega;X)}  \\
  =(|k+\alpha+1|\phi_n(k,\alpha+1)+\phi_n(k+1,\alpha+1))\big\|f\big\|_{L_q(\Omega;X)}.
\end{multline}

Hence, {
\begin{equation}\label{iteration 3}
\begin{split}
&\phi _{n+1}(k,\alpha )= k^k B^kC(\text{Re}\alpha+n+1)\mathfrak{m}_{q,X}e^{\pi|\text{Im}\alpha|}\psi_{n+1}(k,\alpha) \\&\le |k+\alpha +1|\phi _n(k,\alpha +1)+\phi _n(k+1,\alpha +1) \\ &\le |k+\alpha+1| k^k B^k C(\text{Re}\alpha+1+n)\mathfrak{m}_{q,X}e^{\pi|\text{Im}\alpha|}\psi_n(k,\alpha+1) \\ &\qquad+(k+1)^{k+1}B^{k+1}C(\text{Re}\alpha+1+n)\mathfrak{m}_{q,X}e^{\pi|\text{Im}\alpha|}\psi_n(k+1,\alpha+1).
\end{split}
\end{equation}}
and for {$\psi_{n+1}(k,\alpha)$} we have {
\begin{equation*}
    \psi_{n+1}(k,\alpha) \le |k+\alpha +1|\psi_n(k,\alpha+1)  +\Big(\frac{k+1}{k}\Big)^k(k+1)B\psi_n(k+1,\alpha+1),
\end{equation*}}
where {$(\frac{k+1}{k})^k\leq e$}.

Now, by the induction hypothesis {\eqref{ineq:psi}} for the functions {$\psi_n(k,\alpha)$} and \eqref{iteration 3} we have {
\begin{equation}\label{iteration 4}
\begin{split}
\psi _{n+1}(k,\alpha )&\le |k+\alpha +1|c^n(k+|\alpha+1|)^n B^n \\
&\qquad+e(k+1)B\cdot c^n(k+1+|\alpha+1|)^n B^n \\&\le c^n B^{n+1}(k+|\alpha|+2)^{n+1}(1+e).
\end{split}
\end{equation}
Here
\begin{equation}\label{iteration 5}
\begin{split}
  (k+|\alpha|+2)^{n+1}(1+e) 
  &=(k+|\alpha|)^{n+1}\Big(1+\frac{2}{k+|\alpha|}\Big)^{n+1}(1+e) \\
  &\le (k+|\alpha|)^{n+1}\Big(1+\frac{2}{\max(1,n)}\Big)^{n+1}(1+e) \\
  &\le(k+|\alpha|)^{n+1}c,
\end{split}
\end{equation}
since for the minimal nonnegative $n>-\Re\alpha$ we have
\begin{equation*}
  k+|\alpha|\geq 1+|\alpha|\geq \max(1,n).
\end{equation*}
Now, by the estimate \eqref{iteration 4} for the functions $\psi_{n+1}(k,\alpha)$ and \eqref{iteration 5} we can conclude
\begin{equation}\label{iteration 6}
\begin{split}
  \psi _{n+1}(k,\alpha )
  &\le c^nB^{n+1}(k+|\alpha|+2)^{n+1}(1+e)  \\
  &\leq c^n B^{n+1}(k+|\alpha|)^{n+1}c = c^{n+1}B^{n+1} (k+|\alpha|)^{n+1}.
\end{split}
\end{equation}
}
This completes the proof of \eqref{ineq:psi} by induction. Therefore, by combining {\eqref{iterate 1}, \eqref{ineq:phi} and \eqref{ineq:psi}}
 we obtain the following estimate for any $\alpha\in\C$ with $\Re\alpha>-n$ {
\begin{equation*}
\begin{split}
  &  \Vert t^{k}\partial ^{k}\textrm{M}^{\alpha }_t f\Vert _{L_q(\Omega ; L_q(({\mathbb {R}}_+,\frac{dt}{t});X))}\le \phi _n(k,\alpha )\big \Vert f\big \Vert _{L_q(\Omega ;X)} \\{}
   &  \quad \le k^kB^kC({\text {Re}}\alpha +n){\mathfrak {m}}_{q,X}e^{\pi |\textrm{Im}\,\alpha |}c^n (k+|\alpha |)^n B^n\big \Vert f\big \Vert _{L_q(\Omega ;X)} \\
    &  \quad \lesssim _{{\text {Re}}\alpha }k^k B^{k+N(\text{Re}\alpha)}(k+|\alpha |)^{N({\text {Re}}{\alpha })} {\mathfrak {m}}_{q,X} e^{\pi |\textrm{Im}\,\alpha |}\big \Vert f\big \Vert _{L_q(\Omega ;X)}, 
\end{split}
\end{equation*}}
where {$n=N(\Re{\alpha})$} is as in \eqref{lem:function N}. In particular for $k=1$, we have \eqref{q-a}.
\end{proof}

Now we are ready to show Theorem \ref{heat}:

\begin{proof}[Proof of Theorem \ref{heat}]
We will prove by an induction argument the following more general statement: for any $\alpha\in\C$
\begin{multline}\label{heat a}
  \bigg\|\bigg(\int_0^\infty\big\|t^k\partial^k \mathrm M^\alpha_tf\big\|_{X}^q\,\frac{dt}{t}\bigg)^{1/q}\bigg\|_{L_p(\Omega)}  \\
  \le C(k+|\alpha|)^{k-1}B^{k+1+\abs{\Re\alpha}} \mathfrak{m}_{q,X} \big\|f\big\|_{L_p(\Omega;X)},
\end{multline}
for all $f\in L_p(\Omega; X)$, where $C=C(p,q,\alpha)$ is a positive constant that depends on $p,q$ and $\alpha$, and  $B:=q^2(\mathfrak{m}_{q,X})^{2q+1}(1+\log(q)+q\log(\mathfrak{m}_{q,X}))$.

We fix $\alpha\in\C$ and choose $\theta\in(0,1),\; r\in(1,\infty)$, $\alpha_0, \alpha_1\in\C$ such that the following hold
\begin{equation}\label{a-2}
\begin{split}
  \frac{1}{p}=\frac{1-\theta}{q} + \frac{\theta}{r}\,,\;
  \alpha=(1-\theta)\alpha_0 +\theta\,\alpha_1,\;  \\
  \Re\alpha_1>1\;\text{and }\;\Im\alpha_0=\Im\alpha_1=\Im\alpha.      
\end{split}
\end{equation}
Then the classical complex interpolation of vector-valued $L_p$-spaces (see \cite{BL}) implies
\begin{equation*}
  L_p(\Omega; X)=\big[L_q(\Omega; X),\, L_r(\Omega; X)\big]_\theta.
\end{equation*}
Hence for any $f\in L_p(\Omega;X)$ with $\|f\|_{L_p(\Omega;X)}<1$ there exists a continuous function $F$ from the closed strip $\{z\in\C: 0\le\Re z\le1\}$ to $L_q(\Omega; X)+L_r(\Omega; X)$, which is analytic in the interior and satisfies
\begin{equation*}
  F(\theta)=f,\quad \sup_{y\in\R}\big\|F({\rm i}y)\big\|_{L_q(\Omega; X)}<1\;\text{and}\; \sup_{y\in\R}\big\|F(1+{\rm i}y)\big\|_{L_r(\Omega; X)}<1.  
\end{equation*}
Define
\begin{equation*}
  \mathcal F_t(z)=e^{z^2-\theta^2}\,t\partial \mathrm M^{(1-z)\alpha_0+z\alpha_1}_tF(z).  
\end{equation*}
Viewed as a function of $z$ on the strip $\{z\in\C: 0\le\Re z\le1\}$, the function $\mathcal F$ takes values in $L_q(\Omega; L_q(\R_+; X))+L_r(\Omega; L_q(\R_+; X))$, where $\R_+$ is equipped with the measure $\frac{dt}{t}$. By the analyticity of $\mathrm M^{(1-z)\alpha_0+z\alpha_1}$ in $z$, we see that $\mathcal F$ is analytic in the interior of the strip. Moreover,
by Lemma \ref{p=q2} applied to $(1-iy)\alpha_0+iy\alpha_1$, where real part is $\Re\alpha_0$ by \eqref{a-2}, in place of $\alpha$ we get
\begin{equation*}
\begin{split}
  &\bigg\|\bigg(\int_0^\infty\big\|\mathcal F_t({\rm i}y)\big\|_{X}^q\,\frac{dt}{t}\bigg)^{1/q}\bigg\|_{L_q(\Omega)}  \\
  &\lesssim_{\Re\alpha_0}(|(1-iy)\alpha_0+iy\alpha_1|+2)^{N(\Re{\alpha_0})}B^{1+N(\Re\alpha_0)}\mathfrak{m}_{q,X} \\
  &\qquad\times e^{-y^2-\theta^2}e^{\pi (|{\rm Im}\,\alpha|+|{\rm Re}(\alpha_1-\alpha_0)y|)}  \\
  & \lesssim_{\Re\alpha_0}G(y)B^{1+N(\Re\alpha_0)}\mathfrak{m}_{q,X},
\end{split}
\end{equation*}
for all $y\in\R$ and
\begin{equation*} 
  G(y):=\bigg(|\alpha_0|\sqrt{1+y^2}+|y||\alpha_1|+2\bigg)^{1+|\Re\alpha_0|}e^{-y^2}e^{|{\rm Re}(\alpha_1-\alpha_0)y|)}e^{\pi |{\rm Im}\,\alpha|}.  
\end{equation*}
Notice that $y\mapsto G(y)$ is continuous on $\R$ and $\lim_{y\rightarrow\pm\infty}G(y)=0$. Therefore it attains some maximum $M_0(\alpha_0,\alpha_1,\alpha)=M_1(p,q,\alpha)$, where the last equality holds since the choice of $\alpha_0$ and $\alpha_1$ in \eqref{a-2} depends only on $p,q$ and $\alpha$.
Hence
\begin{equation*}
  \sup_{y\in\R}\|\mathcal F({\rm i}y)\|_{L_q(\Omega; L_q((\R_+,\frac{dt}{t}); X))}\lesssim_{p,q,\alpha}B^{1+N(\Re\alpha_0)}\mathfrak{m}_{q,X}.
\end{equation*}
Similarly, Lemma \ref{a>1}, with $-iy\alpha_0+(1+iy)\alpha_1$ in place of $\alpha$, implies
\begin{equation*}
  \sup_{y\in\R}\|\mathcal F(1+{\rm i}y)\|_{L_r(\Omega; L_q((\R_+,\frac{dt}t); X))}\lesssim_{p,q,\alpha}\mathfrak{m}_{q,X}.
\end{equation*}

We then deduce that $\mathcal F(\theta)$ belongs to the complex interpolation space
\begin{equation}\label{interp-sp}
  \big[L_q(\Omega; L_q(\R_+; X)),\, L_r(\Omega; L_q(\R_+; X))\big]_{\theta}
\end{equation}
with norm majorized by  {
\begin{equation*}
 \lesssim_{p,q,\alpha}(B^{1+N(\Re\alpha_0)}\mathfrak{m}_{q,X})^{1-\theta}(\mathfrak{m}_{q,X})^\theta
  = B^{(1-\theta)(1+N(\Re\alpha_0))}\mathfrak{m}_{q,X},
\end{equation*}
where
\begin{equation*}
\begin{split}
  (1-\theta)(1+N(\Re\alpha_0))
  &=(1-\theta)\Big\{1+N\Big(\frac{1}{1-\theta}\Re\alpha{-}\frac{\theta}{1-\theta}\Re\alpha_1\Big)\Big\} \\
  &\overset{(*)}{=}(1-\theta)\Big\{1+N(\frac{1}{1-\theta}\Re\alpha{-}\frac{\theta}{1-\theta}\Big)\Big\} \\
  &\leq(1-\theta)\Big\{2+\Big(\frac{1}{1-\theta}\abs{\Re\alpha}+\frac{\theta}{1-\theta}\Bigg)\Big\} \\
  &2-2\theta+\abs{\Re\alpha}+\theta\leq 2+\abs{\Re\alpha},
\end{split}
\end{equation*}
since we can choose $\Re\alpha_1$ just slightly larger than $1$ to ensure the equality in $(*)$.}
However, the latter space coincides with  $L_p(\Omega;L_q(\R_+;X))$ isometrically. Since
\begin{equation*}
  \mathcal F_t(\theta)=t\partial \mathrm M^{\alpha}_tF(\theta)=t\partial \mathrm M^{\alpha}_tf,  
\end{equation*}
we get 
\begin{equation*}
  \bigg\|\bigg(\int_0^\infty\big\|t\partial \mathrm
  M^\alpha_tf\big\|_{X}^q\,\frac{dt}{t}\bigg)^{1/q}\bigg\|_{L_p(\Omega)}
  \lesssim_{p,q,\alpha}B^{2+\abs{\Re\alpha}}\mathfrak{m}_{q,X}\big\|f\big\|_{L_p(\Omega;X)},
\end{equation*}
for all $f\in L_p(\Omega; X)$. This is \eqref{heat a} for $k=1$. 

Now, let us assume that \eqref{heat a} holds for some $k$. Then using \eqref{a-1} and the induction hypothesis \eqref{heat a}, we get
\begin{equation*}
\begin{split}
  \| &t^{k+1} \partial^{k+1}\mathrm M^\alpha_t f\|_{L_p(\Omega; L_q((\R_+,\frac{dt}{t});X))}  \\ 
  &\le \|t^{k}\partial^{k}\mathrm M^{\alpha-1}_t f\|_{L_p(\Omega; L_q((\R_+,\frac{dt}{t});X))}+|k+\alpha|\|t^{k}\partial^{k}\mathrm M^\alpha_t f\|_{L_p(\Omega; L_q((\R_+,\frac{dt}{t});X))}  \\
 &\le {  C\bigg((k+|\alpha-1|)^{k-1}B^{k+1+\abs{\Re\alpha-1}} } \\
  &\qquad { +\abs{k+\alpha}(k+|\alpha|)^{k-1}B^{k+1+\abs{\Re\alpha}}\bigg)\mathfrak{m}_{q,X} \big\|f\big\|_{L_p(\Omega;X)} } \\
  & { =C(k+1+|\alpha|)^{k-1}(1+k+|\alpha|)B^{k+2+\abs{\Re\alpha}}\mathfrak{m}_{q,X} \big\|f\big\|_{L_p(\Omega;X)} } \\
  & { =C((k+1)+|\alpha|)^{(k+1)-1}B^{(k+1)+1+\abs{\Re\alpha}}\mathfrak{m}_{q,X} \big\|f\big\|_{L_p(\Omega;X)}. }
\end{split}
\end{equation*}
Thus, we derive \eqref{heat a} for any $k$. Theorem \ref{heat} corresponds to \eqref{heat a} for $\alpha=0$. In particular, we get 
\begin{equation*}
\begin{split}
  \bigg\|\left(\int_0^\infty\big\|t^k \partial^k T_t f\big\|_X^q\,\frac{dt}t\right)^{1/q}\bigg\|_{L_p(\Omega)}&\leq C(p,q)k^{k-1}B^{k+1}\mathfrak{m}_{q,X}\big\|f\big\|_{L_p(\Omega;X)},
\end{split}
\end{equation*}
for all $f\in L_p(\Omega; X)$. Thus, the theorem is completely proved.
\end{proof}

\section{Littlewood--Paley--Stein inequalities: Second approach}\label{section:6}

In this section we obtain an alternative estimate for the constant appearing in \eqref{LPS eq:first approach} of Theorem \ref{heat} by combining our Corollary \ref{coro:main 2} with another result of Xu \cite[Theorem 1.4]{Xu:2021}. Before we proceed we need to recall some notions from \cite{Xu:2021}.

Recall that an operator $T$ on $L_p(\Omega)$ ($1\le p\le\infty$) is {\em regular} (more precisely, {\em contractively regular}) if
\begin{equation*}
  \|\sup_{k}|T(f_k)|\|_p\le\|\sup_{k}|f_k|\|_p
\end{equation*}
for all finite sequences $\{f_k\}_{k\ge 1}$ in $L_p(\Omega)$.

Recall that $\{T_t\}_{t>0}$ is analytic on $L_p(\Omega;X)$ if $\{T_t\}_{t>0}$ extends to a bounded analytic function from an open sector $\Sigma_{\beta_0}=\{z\in\C:|\arg(z)|<\beta_0\}$ to $B(L_p(\Omega;X))$ for some $0<\beta_0\le\frac{\pi}{2}$, where $B(Y)$ denotes the space of bounded linear operators on a Banach space $Y$. In this case,
\begin{equation}\label{analyticity}
  {\bf T}_{\beta_0}=\sup\{\|T_{z}\|_{B(L_p(\Omega;X))}: z\in\Sigma_{\beta_0}\}<\infty.
\end{equation}

Now, we can state the following result of Xu:

\begin{theorem}[\cite{Xu:2021}, Theorem 1.4]
Let $X$ be a Banach space, $2\le q<\infty$ and $\{T_t\}_{t>0}$ be a strongly continuous semigroup of regular operators on $L_p(\Omega)$ for a single $1<p<\infty$. Assume additionally that $\{T_t\}_{t>0}$ satisfies \eqref{analyticity}. Let
$\beta_q=\beta_0\min(\frac{p}{q},\frac{p'}{q'})$. If $X$ has martingale cotype q, then 
\begin{equation}\label{Xu's estimate}
\begin{split}
  \bigg\|\left(\int_0^\infty\big\|t \partial T_t f\big\|_X^q\,\frac{dt}t\right)^{1/q}\bigg\|_{L_p(\Omega)}&\lesssim
  \beta_q^{-3}{\bf T}_{\beta_0}^{\min(\frac{p}{q},\frac{p'}{q'})}\max(p^{\frac{2}{q}},p'^{1+\frac{1}{q'}})\mathfrak{m}_{q,X}  \\
  &\qquad\times\big\|f\big\|_{L_p(\Omega; X)},
\end{split}
\end{equation}
for all $f\in L_p(\Omega;X)$.
\end{theorem}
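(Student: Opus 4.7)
The plan is to combine three ingredients: (i) a Rota-type dilation that exploits the contractive regularity to recast the semigroup as a conditional expectation against a larger filtration, (ii) an analytic functional-calculus bound on $t\partial T_t$ extracted from the sectorial boundedness constant ${\bf T}_{\beta_0}$ via Cauchy's integral formula, and (iii) complex interpolation, in the spirit of Stein's $\mathrm M^\alpha$ method already employed in Section \ref{section:5}, between the natural $L_q$-endpoint bound and a trivial estimate derived from sectoriality alone.

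First I would handle the diagonal case $p=q$. Regularity allows a Rota-type dilation in the style of Lemma \ref{lem:Rota}, lifted to $L_q(\Omega;X)$ by tensoring with $\id_X$. This converts a dyadic sampling $(T_{2^{-k}}f)_{k\in\Z}$ into a martingale-like system to which the martingale cotype inequality of Definition \ref{Def:mtc} applies, yielding a discrete square-function bound $\sum_k\|T_{2^{-k}}f-T_{2^{-k+1}}f\|^q\lesssim \mathfrak{m}_{q,X}^q\|f\|^q$ on $L_q(\Omega;X)$. To pass from this dyadic control to the continuous $g$-function $\int_0^\infty\|t\partial T_tf\|^q\,dt/t$, I would slice the integral over dyadic windows and, on each window, apply the Cauchy-formula derivative estimate $\|s\partial T_s\|_{B(L_q(\Omega;X))}\lesssim {\bf T}_{\beta_0}/\beta_0$, analogous to the derivation of \eqref{eq:main bdd.} in Corollary \ref{coro:main}.

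Next, to extend to general $1<p<\infty$, I would set up an analytic family $\mathcal F(z)$ of operators analogous to $t\partial\mathrm M^{(1-z)\alpha_0+z\alpha_1}_t$ in the proof of Theorem \ref{heat}, interpolating between the $p=q$ bound just established and an endpoint at $p_0\in\{1,\infty\}$ where only the sectorial bound and the regularity assumption are used. Duality handles the asymmetry between $p<q$ and $p>q$: for $p\ge q$ one interpolates directly, while for $p\le q$ one dualises and invokes the martingale type $q'$ property of $X^*$, which is the mechanism that produces the $\min(p/q,p'/q')$ exponent on ${\bf T}_{\beta_0}$. The factor $\max(p^{2/q},p'^{1+1/q'})$ should emerge from the standard Stein--Khintchine dualisation at the $L_q$ endpoint, while $\beta_q^{-3}$ accumulates through three successive uses of a sectoriality-type bound: in the Cauchy estimate on $t\partial T_t$, in controlling the tail of the interpolation kernel, and in converting the complex-interpolation exponent into a real power.

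The main obstacle is the precise calibration of constants through this interpolation, so that the resulting $p$-dependence matches $\max(p^{2/q},p'^{1+1/q'})$ and the power of ${\bf T}_{\beta_0}$ matches $\min(p/q,p'/q')$ \emph{exactly}. This calibration hinges on choosing the analytic family $\mathcal F(z)$ so that its behavior at $\Re z=0$ is governed by the sectorial constant while at $\Re z=1$ it is controlled by martingale cotype, and on carefully tracking the $e^{\pi|\mathrm{Im}\,z|}$ factors that inevitably arise in the Stein scheme; everything beyond that is bookkeeping.
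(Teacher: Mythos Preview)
The paper does not prove this theorem at all. It is stated in Section~\ref{section:6} as a quotation of \cite[Theorem 1.4]{Xu:2021} and used as a black box: the only thing the paper does with it is plug in the values $p=q$, $\beta_0\approx 1/(q\mathfrak{m}_{q,X}^q)$ and ${\bf T}_{\beta_0}\lesssim\beta_0 B$ from Corollary~\ref{coro:main 2} into the right-hand side of \eqref{Xu's estimate} to produce the bound \eqref{Xu's new estimate}. So there is no ``paper's own proof'' to compare your proposal against.

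As for your sketch itself: the broad architecture (Rota-type dilation plus martingale cotype at $p=q$, Cauchy-formula derivative bound from analyticity, then complex interpolation to general $p$) is indeed roughly how Xu proceeds in \cite{Xu:2021}, but some of the details you describe are off. In particular, the hypothesis here is only that each $T_t$ is contractively regular on a single $L_p$, not that the semigroup is Markovian or self-adjoint, so Lemma~\ref{lem:Rota} is unavailable; Xu's argument instead goes through the $H^\infty$ functional calculus of the generator and an extrapolation based on regularity, not through Rota dilation. Also, the interpolation in \cite{Xu:2021} is not the Stein $\mathrm M^\alpha$ scheme of Section~\ref{section:5}; the exponent $\min(p/q,p'/q')$ arises from a complex interpolation between the given $L_p$ and $L_q$ (respectively $L_{q'}$) using Fendler-type transference of regularity, not from dualising to martingale type of $X^*$. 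Your attribution of the factor $\beta_q^{-3}$ to ``three successive uses of a sectoriality-type bound'' is speculative and does not match the actual accounting in \cite{Xu:2021}. If you want to supply a proof here, you should consult \cite{Xu:2021} directly rather than adapt the machinery of Section~\ref{section:5}, which is tailored to symmetric diffusion semigroups.
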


By choosing $p=q$, $\beta_0=\frac{1}{q\mathfrak{m}_{q,X}^q}$, $T_{\beta_0}=\beta_0B$, where $B=q^2(\mathfrak{m}_{q,X})^{2q+1}(1+\log(q)+q\log(\mathfrak{m}_{q,X}))$, in Corollary \ref{coro:main 2} we can estimate the constant in \eqref{Xu's estimate} in terms of the martingale cotype constant $\mathfrak{m}_{q,X}$ and exponent $q$ as follows:
\begin{equation}\label{Xu's new estimate}
\begin{split}
  &\lesssim\beta_0^{-2}B\mathfrak{m}_{q,X}\big\|f\big\|_{L_p(\Omega; X)}  \\
  &=(q\mathfrak{m}_{q,X}^q)^2B\mathfrak{m}_{q,X}\big\|f\big\|_{L_p(\Omega; X)}.
\end{split}  
\end{equation}

\begin{remark}
In Corollary \ref{coro:main 2} we assume that $\{T_t\}_{t>0}$ is as symmetric diffusion semigroup on $(\Omega,\mathcal{A},\mu)$. Notice that (since any positive contraction is regular) if $\{T_t\}_{t>0}$ is a symmetric diffusion semigroup then $\{T_t\}_{t>0}$ is a strongly continuous semigroup of regular operators on $L_p(\Omega)$.
\end{remark}

We see that the bound \eqref{LPS eq:first approach} that we get by the first approach in the special case $p=q$, $k=1$ is somewhat better than what we get in \eqref{Xu's new estimate} by the second approach in the same case. We haven't traced the depedence that the second approach would give for $p\ne q$, since this would require the analyticity bounds of Corollary \ref{coro:main 2} for $p\ne q$, while our considerations in Section \ref{section:4} are most naturally adapted to the case $p=q$.

\subsection*{Acknowledgements} 
Both authors were supported by the Academy of Finland through project Nos. 314829 (``Frontiers of singular integrals'') and 346314 (``Finnish Centre of Excellence in Randomness and Structures''). Also, the second author would like to thank the Foundation for Education and European Culture (Founders Nicos and Lydia Tricha), Greece, for their financial support.
We would like to thank the anonymous referee for careful reading and constructive comments that improved the presentation.

This research was carried out while both authors were working at the University of Helsinki.


\begin{thebibliography}{10}

\bibitem{ABHN}
W.~Arendt, C.~J.K. Batty, M.~Hieber and F.~Neubrander.
\newblock {\em Vector-valued Laplace Transforms and Cauchy Problems}, Monographs in Mathematics, vol. 96.
\newblock Birkh\"auser, 2011.

\bibitem{BL}
J.~Bergh and J.~L{\"o}fstr{\"o}m.
\newblock {\em Interpolation Spaces}.
\newblock Berlin, Springer, 1976.

\bibitem{EN}
K.~J. Engel and R.~Nagel.
\newblock {\em One-Parameter Semigroups for Linear Evolution
Equations}, Graduate Texts in Mathematics, vol. 194.
\newblock Springer, 2000.

\bibitem{F}
S.~Fackler.
\newblock Holomorphic Semigroups and the Geometry of Banach Spaces.
\newblock Diploma Thesis, University of Ulm,
\newblock \href{https://www.uni-ulm.de/fileadmin/website_uni_ulm/mawi.inst.020/abschlussarbeiten/fackler_diplom.pdf}{Fackler-diplom.pdf}, 2011.

\bibitem{HNVW}
T.~Hyt{\"o}nen, J.~van Neerven, M.~Veraar, and L.~Weis.
\newblock {\em Analysis in Banach spaces}. Vol. I. Martingales and Littlewood--Paley theory.
\newblock {\em Springer, Cham}, 2016.

\bibitem{HN}
T.~Hyt\"onen and A.~Naor.
\newblock Heat flow and quantitative differentiation.
\newblock {\em J. Euro. Math. Soc.}, 21(11):3415--3466, 2019.

\bibitem{K}
T.~Kato.
\newblock A Characterization of Holomorphic Semigroups.
\newblock {\em Proc. Amer. Math. Soc.}, 25(3):495--498, 1970.

\bibitem{MTX}
T.~Mart\'{i}nez, J.L.~Torrea and Q. Xu.
\newblock Vector-valued Littlewood--Paley--Stein theory for semigroups.
\newblock {\em Adv. Math.}, 203(2):430--475, 2006.

\bibitem{P}
A.~Pazy.
\newblock {\em Semigroups of Linear Operators and Applications to Partial Differential Equations}, Applied Mathematical Sciences, vol. 44. 
\newblock Springer, 1983.

\bibitem{Pisier}
G.~Pisier.
\newblock Martingales with values in uniformly convex spaces.
\newblock {\em Israel J. Math.}, 20:326--350, 1975.

\bibitem{Pisier:1982}
G.~Pisier. 
\newblock Holomorphic semigroups and the geometry of Banach spaces. 
\newblock {\em Ann. of Math.}, 115(2):375--392, 1982.

\bibitem{Pisier:1986}
G.~Pisier.
\newblock Probabilistic methods in the geometry of {B}anach spaces.
\newblock In {\em Probability and analysis ({V}arenna, 1985)}, volume 1206 of
{\em Lecture Notes in Math.}, pages 167--241. Springer, Berlin, 1986.

\bibitem{Pisier:2016}
G.~Pisier.
\newblock {\em Martingales in Banach spaces}, Cambridge Studies in Advanced Mathematics, vol. 155.
\newblock Cambridge University Press, 2016.

\bibitem{S}
E.~M. Stein.
\newblock {\em Topics in Harmonic Analysis Related to the Littlewood--Paley theory}, Ann. Math. Studies. 
\newblock Princeton University Press, 1970.

\bibitem{Xu:98}
Q.~Xu.
\newblock Littlewood--Paley theory for functions with values in uniformly convex spaces.
\newblock {\em J. Reine Angew. Math.}, 504:195--226, 1998.

\bibitem{X}
Q.~Xu.
\newblock Vector-valued Littlewood--Paley--Stein Theory for Semigroups II.
\newblock {\em Int. Math. Res. Not.}, 21:7769--7791, 2020.

\bibitem{Xu:2021}
Q.~Xu.
\newblock Holomorphic functional calculus and vector-valued Littlewood--Paley--Stein theory for semigroups.
\newblock Preprint,
\newblock {\em arXiv:}\href{https://arxiv.org/abs/2105.12175}{2105.12175}, 2021.

\bibitem{Xu:22}
Q.~Xu.
\newblock Optimal orders of the best constants in the Littlewood--Paley inequalities.
\newblock {\em J. Funct. Anal.}, 283(6), Paper No. 109570, 37 pp., 2022.


\end{thebibliography}
\end{document}